\documentclass[12pt]{article}
\usepackage[a4paper,margin=0.92in]{geometry}
\usepackage[utf8]{inputenc}
\usepackage[T1]{fontenc}
\usepackage{amsmath,amssymb,amsfonts,amsthm,mathtools,bm,mathrsfs}
\usepackage{graphicx}
\usepackage{booktabs,longtable,array}
\usepackage{enumitem}
\usepackage{hyperref}
\usepackage{xcolor}
\usepackage{setspace}
\usepackage{placeins}
\usepackage{algorithm}
\usepackage{algpseudocode}
\usepackage{subfig}
\usepackage{float}
\usepackage{tikz}
\usetikzlibrary{arrows.meta,positioning,shapes.geometric}
\usepackage{caption}
\hypersetup{colorlinks=true,linkcolor=blue!55!black,citecolor=blue!55!black,urlcolor=blue!55!black}
\setlist[itemize]{leftmargin=2em}
\setlist[enumerate]{leftmargin=2.2em}
\graphicspath{{fig/}}

\newtheorem{theorem}{Theorem}[section]
\newtheorem{proposition}[theorem]{Proposition}
\newtheorem{lemma}[theorem]{Lemma}
\newtheorem{definition}[theorem]{Definition}
\newtheorem{remark}[theorem]{Remark}

\newtheorem{problem}{Problem}

\newcommand{\cH}{\mathcal H}

\newcommand{\norm}[1]{\lVert #1\rVert}
\newcommand{\HH}{\mathcal H}
\newcommand{\BH}{\mathcal B}
\newcommand{\GG}{\mathsf G}
\newcommand{\CC}{\mathsf C}
\newcommand{\VV}{\mathcal V}
\newcommand{\Id}{\operatorname{Id}}
\newcommand{\eps}{\varepsilon}

\title{A Statistical Imaging Framework for Inverse Obstacle Scattering and Its Resolution Analysis}
\author{Jinrui Zhang\thanks{\footnotesize Department of Mathematics, HKUST, Clear Water Bay, Kowloon, Hong Kong (jinruizhang@ust.hk).},\; Hai Zhang\thanks{\footnotesize Department of Mathematics, HKUST, Clear Water Bay, Kowloon, Hong Kong (haizhang@ust.hk).}
}
\date{}

\begin{document}
	\maketitle
	\begin{abstract}
We develop a general statistical framework for inverse obstacle imaging and resolution analysis from far-field second moments under random illumination. The regularized $Q$-statistic is introduced and utilized to measure the minimal statistically weighted cost of synthesizing the test far field
associated with a sampling point. Its reciprocal provides an imaging indicator. Under uniform full-aperture plane-wave illumination,
we relate this indicator to several classical sampling indicators and derive their statistical counterparts for general random
illumination. For the resolution analysis, separating the effects of radiation and noise from the statistics of induced boundary sources, we measure a correlation-free resolution limit by the one-sided half-height width of the boundary transition. In the regime of fixed wavenumber $k$ with the noise-calibrated regularization level $\rho_\tau\to 0$, we establish a sharp
asymptotic law for this width for simply connected domains with analytic boundaries. The width scales as 
\(h_\Gamma(x)\log\log(1/\rho_\tau)/\log(1/\rho_\tau)\), with $h_\Gamma(x)$ a geometric prefactor along the boundary determining both the boundary image contrast and the local transition width. For general boundary-source statistics, we examine how the source mean, variance, and correlation affect imaging quality. Finally, in representative scattering configurations, we provide qualitative interpretations of some typical imaging phenomena through the statistical structure of their induced boundary sources.  Numerical examples illustrate these mechanisms and indicate that practical resolution is governed jointly by source statistics, radiation, the noise level, rather than by wavelength alone.
	\end{abstract}
	
	
\section{Introduction}
Inverse wave scattering aims to recover inaccessible objects or material parameters from measurements of scattered waves. It underpins many important fields such as radar, sonar, geophysical exploration, nondestructive testing and medical imaging
\cite{coltonInverseAcousticElectromagnetic2019}. For inverse obstacle scattering, although classical uniqueness theory shows that ideal far-field data determine impenetrable obstacles under appropriate data and a priori assumptions \cite{ColtonSleeman1983,KirschGrinberg2008, StefanovUhlmann2005}, it does not ensure stable recovery of fine boundary features from noisy data \cite{Isakov1992,DiCristoRondi2003}. Moreover, many practical applications involve passive imaging, where the individual incident fields are unknown and only statistical information about them is available. These challenges call for a unified framework that produces physically
interpretable images and quantifies how resolution depends on the noise
level. In particular, such a framework should show when imaging below
the classical half-wavelength diffraction limit is possible, namely
when the noise level is sufficiently low. In this paper, we develop such
a statistical framework, using two-dimensional sound-soft obstacle
scattering as a concrete setting.

Classical inverse scattering methods with known incident fields include
regularized nonlinear reconstruction
\cite{coltonInverseAcousticElectromagnetic2019,ColtonKress2018Review} and
qualitative imaging methods based on sampling, subspace and migration
techniques \cite{houDirectImagingMethod2007,ItoJinZou2012,ChenChenHuang2013}.
In the passive setting considered here, second-order (correlation)
measurements have been used to extend the linear sampling method to random
point-source illumination \cite{GarnierHaddarMontanelli2023} and to develop a
direct imaging method based on double cross-correlations and the
Helmholtz--Kirchhoff identity \cite{YinYan2026}. Under suitable Gaussian
assumptions on the random sources, near-field correlation data uniquely
determine both the source strength and the obstacle geometry
\cite{HohageLiu2025}.

Despite much progress in reconstruction methods and uniqueness results in the passive setting, resolution analysis that accounts for measurement noise is
much less developed, and the resolution limit of inverse obstacle scattering
is still poorly understood. Existing resolution analyses in inverse
scattering follow two main approaches.

The first approach is based on estimation and sensitivity analysis.
Cram\'er--Rao bounds measure how accurately parameters can be estimated
using derivatives of the forward map. They typically assume a prescribed
finite-dimensional parametrization of the target and a specified
statistical noise model \cite{YeBreslerMoulin2001,DiongRoueffLasayguesLitman2015}.
Sensitivity analyses have also been carried out for multistatic imaging of
extended targets in the Born and high-frequency regimes
\cite{AmmariEtAl2012Multistatic}. Related analyses based on shape
derivatives describe how scattering data respond to boundary perturbations
\cite{Kirsch1993,Hettlich1995}. These works mainly quantify local,
linearized parameter uncertainty. 

The second approach measures the widths of point-spread functions, or
localization kernels, in correlation-based direct imaging. For direct
sampling and reverse-time migration, these kernels often have Bessel- or
sinc-type profiles \cite{Liu2017NovelSampling,ChenChenHuang2013}. Their
widths depend on the wavelength and the aperture. In passive
correlation-based migration, they also depend on the coherence of the
sources \cite{GarnierPapanicolaou2010}.

The recent theory of the computational resolution limit studies the
reconstruction of point sources from band-limited data
\cite{LiuZhang2021,liuMathematicalTheoryComputational2021,LiuZhang2022}.
It quantifies the minimum separation between sources needed to detect
the number of sources and to recover their locations, in terms of the
noise level, the bandwidth and the number of sources. In particular, it
shows that point sources can be resolved below the classical diffraction
limit when the signal-to-noise ratio is sufficiently high.
A main goal of this paper is to develop a theory in the same spirit for
inverse obstacle scattering with unknown incident fields. We show that
super-resolution is achievable under appropriate statistical assumptions
on the incident fields and a sufficiently low noise level.

Our starting point is the far-field second-moment operator $\Sigma=\mathbb E[y\otimes y^*]$, where \(y=y(\omega,\hat{x})\) is a random far-field pattern with $\omega$ being the random parameter, $y \otimes y^{*}$ denotes the rank-one operator on $L^{2}(\mathbb{S}^{1})$
defined by
$(y \otimes y^{*})\, g := \langle g, y \rangle_{L^{2}(\mathbb{S}^{1})}\, y,
g \in L^{2}(\mathbb{S}^{1})$, 
and the expectation is
taken over the random illumination ensemble. 
For the test pattern $\phi_z(\hat x)=e^{-ik\hat x\cdot z}$, define
\[Q_\tau(z):=\big\langle \phi_z,(\Sigma+\tau I)^{-1}\phi_z\big\rangle_{L^{2}(\mathbb{S}^{1})},\]
with the regularization level $\tau$ calibrated to the noise in the second moment. 
	
The main contributions of this paper can be summarized as follows.
	\begin{enumerate}[label=(\roman*)]
	\item We establish a minimal regularized synthesis-cost interpretation of the $Q$-statistic and adopt its reciprocal as an imaging indicator. Under general illumination, a common second-moment reformulation of classical sampling indicators provides their statistical counterparts, including the factorization method (FM), multiple signal classification (MUSIC), and direct sampling method (DSM).

\item We show that, in the noise-free case with random plane-wave incidence,
the second-moment operator $\Sigma$ uniquely determines the obstacle,
provided that the distribution of the incident directions and the wavenumber $k$ satisfy appropriate conditions. The proof is based on a range
test.

\item In the noisy case, we introduce the correlation-free resolution limit, separating the effect of noise and radiation from the resolution loss caused by source correlation. Measured by the one-sided half-height width of the boundary transition, the noise-dependent soft spectral cutoff yields a width scaling as
\(h_\Gamma(x)\log\log(1/\rho_\tau)/\log(1/\rho_\tau)\) for simply connected domains with analytic boundaries, with $h_\Gamma(x)$ a geometric prefactor along the boundary, as the noise-calibrated parameter $\rho_\tau\to 0$.
		\item We relate general statistics of the boundary source to far-field modal accessibility and characterize the resolution mechanism due to the source mean, variance, and correlation, which supports qualitative explanations of imaging  behavior in several representative scattering settings such as shadow-side degradation, loss of detail in concave regions, gap blurring between nearby obstacles and resolution gain with point-source illumination, illustrated by numerical experiments.
	\end{enumerate}
	
The remainder of this paper is organized as follows. 	Section~\ref{sec:mathematical-statistical-formulation} formulates the scattering and second-moment models. Section~\ref{sec:q-distance-resolution} develops the $Q$-statistic, its sampling-method connections, and the correlation-free resolution analysis. Section~\ref{sec:source-second-moment-imaging} studies the effects of boundary-source statistics, and Section~\ref{sec:physical-statistics-consequences} interprets them in physical scattering configurations. Section~\ref{sec:numerical-experiments} presents numerical experiments with prescribed and scattering-induced sources. Section~\ref{connnn} concludes this paper. Appendices~\ref{appendix-point source}--\ref{app:heat-kernel-model} provide the random point-source illumination model, the disk estimates needed for Theorem \ref{thm:disk-halfheight-resolution}, the complete sharp analytic-boundary asymptotic proof of Theorem \ref{thm:analytic-boundary-resolution} and the diffusion-based correlation model, respectively.
	
	\section{Mathematical and Statistical Formulation}
	\label{sec:mathematical-statistical-formulation}
	
	This section first recalls the two-dimensional sound-soft scattering problem and
	its equivalent induced-source representation, then introduces random
	plane-wave illumination as an example, derives the second-moment factorization, and
	formulates the general statistical inverse problems.
	Figure~\ref{fig:scattering-setup} summarizes the measurement setting for
	a general random incident field.

	\begin{figure}[htbp]
		\centering
		\includegraphics[width=0.8\linewidth]{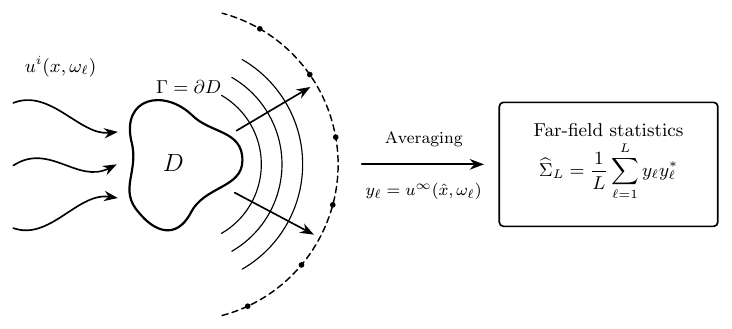}
		\caption{Statistical obstacle imaging. Each realization of the incident
			field excites the fixed obstacle \(D\), producing a measured far-field
			sample \(y_\ell=u^\infty(\hat x,\omega_\ell)\). Averaging sample outer
			products gives the far-field second moment.}
		\label{fig:scattering-setup}
	\end{figure}
	
	\subsection{Deterministic obstacle scattering}
	\label{subsec:forward-scattering}
	
	Let $D=\bigcup_{j=1}^{M}D_j\subset\mathbb R^2$ be a collection of bounded sound-soft obstacles with connected exterior and sufficiently smooth boundary
	$\Gamma=\partial D=\bigcup_{j=1}^{M}\Gamma_j$.  Given the incident field $u^i$, the scattered field is
	the unique solution of
	\begin{align}
		\begin{cases}
			\Delta u^s+k^2u^s=0 &\text{in }\mathbb R^2\setminus\overline D,\\
			u^s=-u^i &\text{on }\Gamma,
		\end{cases}
	\end{align}
	with the Sommerfeld radiation condition \(\sqrt r(\partial_r u^s-iku^s)\to0\) as $r=|x|\to\infty$, uniformly in
	$\hat x=x/|x|$.  The total field is $u=u^i+u^s$.  Its scattered part has the
	far-field asymptotics
	\begin{equation}
		u^s(x)=\frac{e^{ikr}}{\sqrt r}
		\left(u^\infty(\hat x)+\mathcal O(r^{-1})\right),
		\qquad r\to\infty,
		\label{eq:farfield-asymptotics}
	\end{equation}
	which defines $u^\infty\in C^\infty(\mathbb S^1)$.  Standard well-posedness
	and far-field properties may be found in
	\cite{coltonInverseAcousticElectromagnetic2019,CakoniColtonHaddar2016}.  We consider the two-dimensional sound-soft problem in this paper while the statistical mechanism below can be naturally extended
	to three dimensions and to other boundary conditions.
	
	\subsection{Induced boundary-source representation}
	\label{subsec:induced-boundary-source}
	
	This subsection introduces the induced boundary-source representation
	that links the obstacle scattering problem to the source-based statistical formulation developed below. Let $\Phi_k(x,y)=\frac{i}{4}H_0^{(1)}(k|x-y|)$ be the outgoing
	fundamental solution in two dimensions.  Representing the scattered field as
	\begin{equation}
		u^s(x)=\int_\Gamma \Phi_k(x,y)\varphi(y)\,ds(y),
		\qquad x\in\mathbb R^2\setminus\Gamma,
		\label{eq:single-layer-representation}
	\end{equation}
	introduces an equivalent boundary density $\varphi$, which we call the
	\emph{induced boundary source}.  With
	\[(S_\Gamma q)(x):=\int_\Gamma\Phi_k(x,y)q(y)\,ds(y),
		\qquad x\in\Gamma,\]
	the boundary condition gives $S_\Gamma\varphi=-u^i|_\Gamma$.  The operator
	$S_\Gamma:L^2(\Gamma)\to H^{1}(\Gamma)$ is invertible when $k^2$ is
	not an interior Dirichlet eigenvalue. We call this the \textbf{nonresonance condition} and assume it from now on. Then
	\[\varphi=-S_\Gamma^{-1}(u^i|_\Gamma).\]
	At a resonance, a combined-field representation  retains the second-moment factorization structure with the corresponding boundary operator and radiation map.
	
	The boundary-to-far-field map is
	\begin{equation}
		(A_\Gamma q)(\hat x):=\gamma_k\int_\Gamma
		e^{-ik\hat x\cdot y}q(y)\,ds(y),
		\qquad \hat x\in\mathbb S^1,
		\label{eq:boundary-to-farfield-operator}
	\end{equation}
	where $\gamma_k=e^{i\pi/4}/\sqrt{8\pi k}$.  Hence
	\begin{equation}
		u^\infty=A_\Gamma\varphi
		=-A_\Gamma S_\Gamma^{-1}(u^i|_\Gamma).
		\label{eq:far-field-Aphi}
	\end{equation}
	
	\begin{remark}[Inverse source versus inverse scattering]
		Equation~\eqref{eq:far-field-Aphi} also gives
		$u^\infty(\hat x)=\gamma_k\widehat{\varphi\,ds}(k\hat x)$, up to the Fourier
		transform convention.  It therefore connects inverse obstacle  scattering with inverse source problems.  The distinction is
		essential.  In an inverse source problem the source density is an active source
		prescribed independently of the propagation medium, whereas in the inverse scattering problem considered here, the source density $\varphi$ is induced by
		the incident field through the obstacle boundary condition and changes with
		$\Gamma$.
	\end{remark}
	
	\subsection{Random plane-wave incidence and the statistical formulation}
	\label{subsec:statistical-second-moment-problem}
	
	This subsection introduces random plane-wave illumination and the resulting statistical formulation, which will serve as a typical example for the statistical imaging framework developed subsequently. Specializing the illumination to plane waves
	$u_d^i(x)=e^{ikd\cdot x}$, $d\in\mathbb S^1$, let the direction $d$ be random
	with probability density $\mu\in L^\infty(\mathbb S^1)$ relative to arc-length
	measure $d\sigma$, so that $\mu\ge0$ and
	$\int_{\mathbb S^1}\mu\,d\sigma=1$.  Denote the induced boundary source
	and measured far field by $\varphi_d=-S_\Gamma^{-1}[u_d^i|_\Gamma]$ and
	$y_d=A_\Gamma\varphi_d$, respectively.

	The incident trace has the second-moment operator
	$\mathcal{K}_{\mu,\Gamma}$ whose kernel is
	\begin{equation}
		K_{\mu,\Gamma}(x,y)
		=\int_{\mathbb S^1}e^{ikd\cdot(x-y)}\mu(d)\,d\sigma(d),
		\qquad x,y\in\Gamma.
		\label{eq:incident-trace-second-moment}
	\end{equation}
	For uniform full-aperture illumination, $\mu=(2\pi)^{-1}$ and
	$K_{\mu,\Gamma}(x,y)=J_0(k|x-y|)$.  
	
	Linearity propagates the second moment through the scattering system.  On the
	boundary, the induced boundary sources have the second moment
	\begin{equation}
		\mathcal{K}_{s,\Gamma}
		:=\mathbb E_\mu[\varphi_d\otimes\varphi_d^*]
		=S_\Gamma^{-1}\mathcal{K}_{\mu,\Gamma}S_\Gamma^{-*},
		\label{eq:source-second-moment-sandwich}
	\end{equation}
	where $S_\Gamma^{-*}:L^2(\Gamma)\to H^{-1}(\Gamma)$ denotes
the adjoint of $S_\Gamma^{-1}:H^1(\Gamma)\to L^2(\Gamma)$
with respect to the $L^2$-based Sobolev duality.
Here $\mathcal K_{\mu,\Gamma}$ extends boundedly from $H^{-1}(\Gamma)$ to $H^1(\Gamma)$.  This yields the far-field second-moment operator
	\begin{equation}
		\Sigma_{D,\mu}
		:=\mathbb E_\mu[y_d\otimes y_d^*]
		=A_\Gamma \mathcal{K}_{s,\Gamma}A_\Gamma^*
		=A_\Gamma S_\Gamma^{-1}\mathcal{K}_{\mu,\Gamma}
		S_\Gamma^{-*}A_\Gamma^*.
		\label{eq:farfield-second-moment-sandwich}
	\end{equation}
	Equations~\eqref{eq:source-second-moment-sandwich} and
	\eqref{eq:farfield-second-moment-sandwich} are the source-side and data-side
	factorizations of the statistical forward problem. Their two-stage
	structure is illustrated in Figure~\ref{fig:covariance-flow}.

	\begin{figure}[htbp]
		\centering
		\includegraphics[width=0.96\linewidth]{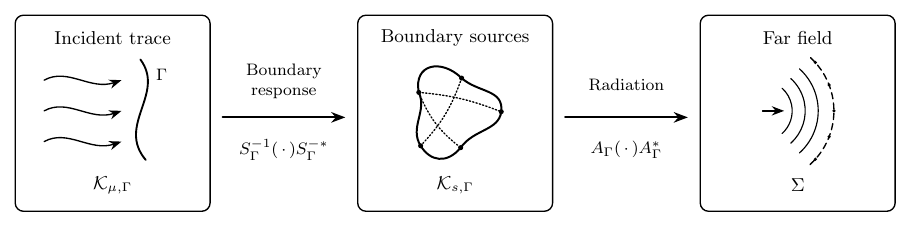}
		\caption{Statistical forward chain. The incident-trace second moment
			\(\mathcal K_{\mu,\Gamma}\) is mapped to the boundary-source second
			moment \(\mathcal K_{s,\Gamma}\) by
			\(\mathcal K\mapsto S_\Gamma^{-1}\mathcal KS_\Gamma^{-*}\), and then to the far-field
			second moment \(\Sigma\) by \(\mathcal K\mapsto A_\Gamma \mathcal K A_\Gamma^*\).
			Dotted links on the
			boundary schematically represent source correlations.}
		\label{fig:covariance-flow}
	\end{figure}

	The factorization also separates the physics.  The incident density $\mu$ fixes
	the directional coherence, $S_\Gamma^{-1}$ encodes the boundary condition,
	geometry, and multiple scattering, and $A_\Gamma$ determines which induced
	source modes reach the far field.  The smoothing kernel of $A_\Gamma$ strongly attenuates highly oscillatory boundary modes  that may carry fine-scale information.

We note that the far-field second-moment operator $\Sigma_{D,\mu}$ is closely related to the
far-field operator used in active imaging.
Recall that the far-field operator $F\colon L^2(\mathbb S^1)\to L^2(\mathbb S^1)$ is defined by
\begin{equation}
  (Fg)(\hat x) := \int_{\mathbb S^1} u_D^\infty(\hat x,d)\,g(d)\,d\sigma(d),
  \qquad g\in L^2(\mathbb S^1),
  \label{eq:farfield-operator}
\end{equation}
see \cite{coltonInverseAcousticElectromagnetic2019}. Its adjoint is
$(F^*h)(d)=\int_{\mathbb S^1}\overline{u_D^\infty(\hat y,d)}\,h(\hat y)\,d\sigma(\hat y)$.

Let the incident direction $d$ be random with probability density
$\mu\in L^\infty(\mathbb S^1)$, as introduced above, and let
$M_\mu\colon L^2(\mathbb S^1)\to L^2(\mathbb S^1)$, $(M_\mu g)(d):=\mu(d)\,g(d)$,
be the corresponding multiplication operator. Then the second-moment kernel is
\begin{equation}
  \Sigma_{D,\mu}(\hat x,\hat y)
  := \mathbb E\!\left[u_D^\infty(\hat x,d)\,\overline{u_D^\infty(\hat y,d)}\right]
  = \int_{\mathbb S^1} u_D^\infty(\hat x,d)\,\mu(d)\,
    \overline{u_D^\infty(\hat y,d)}\,d\sigma(d).
  \label{eq:second-moment-kernel}
\end{equation}
It follows that
\begin{equation}
  \Sigma_{D,\mu} = F M_\mu F^* .
  \label{eq:second-moment-kernel-FmuFstar}
\end{equation}
For uniform illumination, $\mu\equiv 1/(2\pi)$, this reduces to
\begin{equation}
  2\pi\,\Sigma_{D,\mathrm{unif}} = FF^*,
  \label{eq:second-moment-ffstar}
\end{equation}
which is the time-reversal operator \cite{HazardRamdani2004} that underlies
MUSIC- and factorization-type imaging methods.

The following proposition shows that second-moment data retain
the classical ideal range information under full-aperture random plane-wave illumination.
	
	\begin{proposition}
		\label{thm:nonuniform-range-equivalence} 
        Assume the directional density $\mu$ satisfies
		\begin{equation}
			0<c\le \mu(d)\le C<\infty
			\quad\text{for a.e. }d\in\mathbb S^1.
			\label{eq:illumination-bounds}
		\end{equation}
		Then $\Sigma_{D,\mu}$ defined in \eqref{eq:second-moment-kernel-FmuFstar} satisfies
		\begin{equation}
			cFF^*\le\Sigma_{D,\mu}\le CFF^*,
			\qquad
			\operatorname{Ran}(\Sigma_{D,\mu}^{1/4})
			=\operatorname{Ran}((FF^*)^{1/4}).
			\label{eq:range-invariance-second-moment}
		\end{equation}
	\end{proposition}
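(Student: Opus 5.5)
The operator inequalities follow directly from the factorization \eqref{eq:second-moment-kernel-FmuFstar}, and the range identity then follows from Douglas' lemma applied to square roots.

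\textbf{Step 1: operator inequalities.} For $g\in L^2(\mathbb S^1)$ we have
\[
\langle \Sigma_{D,\mu}g,g\rangle=\langle M_\mu F^*g,F^*g\rangle=\int_{\mathbb S^1}\mu(d)\,|(F^*g)(d)|^2\,d\sigma(d).
\]
The bounds \eqref{eq:illumination-bounds} hold almost everywhere and $F^*g\in L^2$, so this integral lies between $c\norm{F^*g}^2$ and $C\norm{F^*g}^2$. Since $\norm{F^*g}^2=\langle FF^*g,g\rangle$, this gives $cFF^*\le\Sigma_{D,\mu}\le CFF^*$. Both operators are bounded, self-adjoint and nonnegative, so their fractional powers are defined by the functional calculus.

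\textbf{Step 2: ranges.} I would not attempt to pass from the inequality to fourth roots via operator monotonicity. Douglas' lemma gives the cleaner route: for bounded $A,B$, $\operatorname{Ran}A\subseteq\operatorname{Ran}B$ iff $AA^*\le\lambda BB^*$ for some $\lambda>0$.
\begin{itemize}
\item Set $A=\Sigma_{D,\mu}^{1/2}$ and $B=(FF^*)^{1/2}$. Then $AA^*=\Sigma_{D,\mu}$ and $BB^*=FF^*$.
\item Step 1 and Douglas' lemma in both directions give $\operatorname{Ran}\Sigma_{D,\mu}^{1/2}=\operatorname{Ran}(FF^*)^{1/2}$.
\item Independently, $\operatorname{Ran}(FF^*)^{1/2}=\operatorname{Ran}F$ and $\operatorname{Ran}\Sigma_{D,\mu}^{1/2}=\operatorname{Ran}(FM_\mu^{1/2})=\operatorname{Ran}F$, because $M_\mu^{1/2}$ is a bounded bijection on $L^2$.
\end{itemize}

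\textbf{Step 3: fourth roots.} To reach $\operatorname{Ran}(\cdot)^{1/4}$, apply Löwner--Heinz with exponent $1/2$ to the inequality of Step 1. This gives
\[
c^{1/2}(FF^*)^{1/2}\le\Sigma_{D,\mu}^{1/2}\le C^{1/2}(FF^*)^{1/2}.
\]
Applying Douglas' lemma once more, now with $A=\Sigma_{D,\mu}^{1/4}$ and $B=(FF^*)^{1/4}$, yields $\operatorname{Ran}\Sigma_{D,\mu}^{1/4}=\operatorname{Ran}(FF^*)^{1/4}$.

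\textbf{Main obstacle.} The only nontrivial point is the justification of Step 3. A two-sided bound $cT\le S\le CT$ does not in general give $cT^2\le S^2$, because $t\mapsto t^2$ is not operator monotone. The argument therefore relies on the exponent $1/2\in(0,1]$, where Löwner--Heinz does apply, and not on any squaring argument.

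The range identity is exactly what feeds the range test $\phi_z\in\operatorname{Ran}(FF^*)^{1/4}\iff z\in D$ of the factorization method. That test uses that $F$ is normal for the sound-soft problem, so $(FF^*)^{1/4}=(F^*F)^{1/4}$; this normality is not needed for the proposition itself.
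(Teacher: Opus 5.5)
Your argument is correct and is essentially the paper's proof: the quadratic-form bound from $cI\le M_\mu\le CI$, then L\"owner--Heinz with exponent $1/2$ to get $\sqrt c\,(FF^*)^{1/2}\le\Sigma_{D,\mu}^{1/2}\le\sqrt C\,(FF^*)^{1/2}$, then Douglas' lemma applied to $\Sigma_{D,\mu}^{1/4}$ and $(FF^*)^{1/4}$. Your Step~2, which identifies the half-power ranges with $\operatorname{Ran}F$ via $\Sigma_{D,\mu}=(FM_\mu^{1/2})(FM_\mu^{1/2})^*$, is correct but not needed for the stated fourth-root identity.
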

	\begin{proof}
Since \(cI\le M_\mu\le CI\) and
\(\Sigma_{D,\mu}=FM_\mu F^*\), we obtain
\(cFF^*\le\Sigma_{D,\mu}\le CFF^*\).
The L\"owner--Heinz inequality \cite{Furuta2001} yields
\[
\sqrt c\,(FF^*)^{1/2}
\le \Sigma_{D,\mu}^{1/2}
\le \sqrt C\,(FF^*)^{1/2}.
\]
Applying Douglas' range-inclusion lemma \cite{Douglas1966}
to \(S=\Sigma_{D,\mu}^{1/4}\) and \(T=(FF^*)^{1/4}\),
whose products satisfy
\(\sqrt c\,TT^*\le SS^*\le\sqrt C\,TT^*\),
gives both range inclusions and hence
\(\operatorname{Ran}S=\operatorname{Ran}T\).
\end{proof}
	
    The lower bound in \eqref{eq:illumination-bounds} ensures uniformly positive illumination over the full aperture while the upper bound
	excludes singular concentration of illumination power. Under this assumption, the obstacle is determined from far-field second moments through the classical range test in the factorization method \cite{coltonInverseAcousticElectromagnetic2019}.
	
	\begin{theorem}
	    	\label{cor:second-moment-identifiability}
		Under the assumptions of Proposition \ref{thm:nonuniform-range-equivalence} and the nonresonance condition, with
		$\phi_z(\hat x)=e^{-ik\hat x\cdot z}$,
		\[
		z\in D
		\quad\Longleftrightarrow\quad
		\phi_z\in\operatorname{Ran}(\Sigma_{D,\mu}^{1/4}).
		\]
		Consequently, $\Sigma_{D,\mu}$ uniquely determines the sound-soft obstacle $D$.
	\end{theorem}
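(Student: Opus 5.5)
The plan is to reduce the statement to the classical factorization-method range characterization. Proposition~\ref{thm:nonuniform-range-equivalence} gives
\[
\operatorname{Ran}(\Sigma_{D,\mu}^{1/4})=\operatorname{Ran}((FF^*)^{1/4}),
\]
so it suffices to show that $z\in D$ if and only if $\phi_z\in\operatorname{Ran}((FF^*)^{1/4})$.

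For the sound-soft problem under the nonresonance condition, the far-field operator $F$ is normal; this is standard, using unitarity of the scattering operator $I+2ik|\gamma_k|^2F$ (up to the normalization constant). Hence $FF^*=F^*F=|F|^2$, and $(FF^*)^{1/4}=|F|^{1/2}=(F^*F)^{1/4}$. The classical Kirsch factorization method then states that
\[
z\in D \iff \phi_z\in\operatorname{Ran}((F^*F)^{1/4}),
\]
where $\phi_z(\hat x)=e^{-ik\hat x\cdot z}$ is, up to the constant $\gamma_k$, the far field of $\Phi_k(\cdot,z)$. This result is in \cite{coltonInverseAcousticElectromagnetic2019} and Kirsch--Grinberg. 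I will quote it rather than reprove it. Only its ingredients need recalling: the factorization $F=-GS_\Gamma^*G^*$ with the data-to-pattern operator $G$; the coercivity of $\operatorname{Im}$ and $\operatorname{Re}$ parts of $S_\Gamma$ modulo compact perturbations; the range identity $\operatorname{Ran}(G)=\operatorname{Ran}((F^*F)^{1/4})$; and the fact that $\phi_z\in\operatorname{Ran}G$ exactly when $z\in D$.

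It is worth checking the sign convention of $\phi_z$ against the paper's $A_\Gamma$, which uses $e^{-ik\hat x\cdot y}$; this matches the far field of a point source at $z$ with multiplier $\gamma_k$. Scalar constants do not affect range membership. Combining this with the range equality gives the equivalence in the theorem.

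For uniqueness, suppose two obstacles $D_1,D_2$, both nonresonant, satisfy $\Sigma_{D_1,\mu}=\Sigma_{D_2,\mu}$. Then their fourth roots coincide, since the positive fourth root of a positive operator is unique. By the equivalence, $z\in D_1$ if and only if $z\in D_2$ for every $z$. Thus $D_1=D_2$ as sets, and $\Sigma_{D,\mu}$ determines $D$, with $k$ and the inequality \eqref{eq:illumination-bounds} fixed.

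I expect the main obstacle to be the identification $(FF^*)^{1/4}=(F^*F)^{1/4}$. The factorization-method range theorem is usually stated for $(F^*F)^{1/4}$, while $\Sigma$ naturally involves $FF^*$. Normality of $F$ resolves this.

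If one prefers to avoid normality, there is a fallback. The range of $(FF^*)^{1/2}$ equals $\operatorname{Ran}F$, and a polar-decomposition argument gives $\operatorname{Ran}((FF^*)^{1/4})=U\operatorname{Ran}((F^*F)^{1/4})$ with $U$ the partial isometry. One would then need $U$ to preserve the range test, which again essentially uses normality. So I will rely on the normality of $F$, which holds under the nonresonance assumption, where $F$ is also injective with dense range.
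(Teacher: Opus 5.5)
Your proposal is correct and takes essentially the same route as the paper: combine the range equality $\operatorname{Ran}(\Sigma_{D,\mu}^{1/4})=\operatorname{Ran}((FF^*)^{1/4})$ from Proposition~\ref{thm:nonuniform-range-equivalence} with the classical factorization-method range test for sound-soft obstacles. You also make explicit that $F$ is normal, which identifies $(FF^*)^{1/4}$ with $(F^*F)^{1/4}$; the paper uses this step implicitly when it cites the range test.
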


This uniqueness result concerns exact data and does not imply the resolution attainable in the presence of noise. Even when \(\mu\) remains uniformly positive, more strongly illuminated boundary regions tend to be easier to image, whereas weakly illuminated regions may be difficult to distinguish from noise.  Thus illumination distributions with the same ideal identifiability can yield different image contrast and spatial resolution, as examined in the following sections.
    
	\subsection{General statistical inverse obstacle problem}
	\label{subsec:statistical-inverse-problem}
The plane-wave model above is one particular choice of illumination ensemble.
More generally, let $(\Omega,\mathcal F,\mathbb P)$ be a probability space, with
$\omega\in\Omega$ the random parameter that drives the illumination, and let
\begin{equation}
  g(\omega) := u^i(\omega)\big|_\Gamma \in H^{1}(\Gamma)
  \label{eq:random-trace}
\end{equation}
be the random incident trace on $\Gamma=\partial D$. We assume
$\mathbb E\|g\|_{H^{1}(\Gamma)}^2<\infty$, so that the second-moment operator for the trace of incident fields on $\Gamma$
\begin{equation}
  \mathcal K_{i,\Gamma} := \mathbb E\,[\,g\otimes g^*\,]
  \label{eq:incident-second-moment}
\end{equation}
is well defined. We then define
the induced boundary source density and the corresponding far-field pattern by
\begin{equation}
  \varphi(\omega) := -S_\Gamma^{-1} g(\omega) \in L^2(\Gamma),
  \qquad
  u^\infty(\omega) := A_\Gamma \varphi(\omega) \in L^2(\mathbb S^1).
  \label{eq:random-density-farfield}
\end{equation}
Plane-wave illumination is the special case
$g(\omega)=e^{ik\,x\cdot d(\omega)}\big|_{x\in\Gamma}$, where the incident
direction $d(\omega)\in\mathbb S^{1}$ is random with density $\mu$. The
second-moment operators of the induced boundary sources and of the
far-field data,
\[
  \mathcal K_{s,\Gamma}:=\mathbb E[\varphi\otimes\varphi^*],
  \qquad
  \Sigma:=\mathbb E[u^\infty\otimes(u^\infty)^*],
\]
factorize as
\begin{equation}
  \mathcal K_{s,\Gamma} = S_\Gamma^{-1}\,\mathcal K_{i,\Gamma}\,S_\Gamma^{-*},
  \qquad
  \Sigma = A_\Gamma\,\mathcal K_{s,\Gamma}\,A_\Gamma^{*}.
  \label{eq:second-moment-factorization}
\end{equation}
For plane-wave illumination,
\eqref{eq:second-moment-factorization} reduces to
$\Sigma=\Sigma_{D,\mu}=F M_\mu F^*$, where $M_\mu$ denotes multiplication
by $\mu$. See also Appendix~\ref{appendix-point source} for near-field point-source illumination.

In practice, $\Sigma$ is not available. Instead, we observe $L$ noisy far-field
measurements
\begin{equation}
  y_\ell^\delta = u^\infty(\omega_\ell) + \eta_\ell,
  \qquad \ell=1,\dots,L,
  \label{eq:noisy-data}
\end{equation}
where $\omega_1,\dots,\omega_L$ are independent realizations of the illumination and
$\eta_\ell$ denotes measurement noise of level $\delta$. From these data we form the
empirical second moment
\begin{equation}
  \widehat\Sigma_L := \frac1L\sum_{\ell=1}^{L} y_\ell^\delta\otimes (y_\ell^\delta)^* .
  \label{eq:empirical-second-moment}
\end{equation}
The estimate $\widehat\Sigma_L$ differs from $\Sigma$ because of the
sampling error due to finite realizations, and the measurement noise. 

Within a prescribed class of admissible illuminations, we address two complementary
problems.
	
	\begin{problem}[Data-driven statistical imaging]
		\label{prob:statistical-inverse-obstacle}
		Given the wavenumber $k>0$, observation aperture and measured second moment \(\widehat\Sigma\), localize the obstacle boundary $\Gamma$. 
	\end{problem}

    \begin{problem}[Resolution analysis]
    \label{prob:statistical-resolution}
    Given a reference obstacle, an illumination ensemble,
    the wavenumber, and a noise level, develop a framework
    for assessing the indicator-based resolution limit and the statistical imaging mechanism.
\end{problem}

To address both problems, we introduce a regularized $Q$-statistic that can be
evaluated directly from the far-field second moment. It can equivalently be
interpreted as the minimal statistical cost of a  source whose radiated far field matches a prescribed
test pattern. This interpretation links data-driven imaging to the
boundary-source representation \eqref{eq:second-moment-factorization}, and it serves
as a common starting point for both reconstruction and resolution analysis.

 The resolution analysis is complicated by the fact that the observed second moment
combines several effects at once: measurement noise, illumination statistics,
boundary geometry, scattering (including multiple scattering), and radiation to the
far field. Noise enters only through the empirical estimate $\widehat\Sigma_L$. The
factorization \eqref{eq:second-moment-factorization} splits the noise-free second
moment $\Sigma$ into a radiation part $A_\Gamma$ and a source part 
$\mathcal K_{s,\Gamma}$ that combines the
scattering effect and the
illumination statistics.
We therefore first consider a correlation-free boundary-source limit, in which $\mathcal K_{s,\Gamma} \propto I$. Then
$\Sigma \propto A_\Gamma A_\Gamma^*$, so the data depend on the scatterer only
through the support $\Gamma$ of the radiating source. In this setting, we characterize resolution by how sharply the imaging
functional changes across $\Gamma$. We then reintroduce the structure of
$\mathcal K_{s,\Gamma}$ and examine how the coherent mean, a spatially varying
fluctuation variance and the spatial correlation of the boundary source each change
the imaging behavior. Finally, we use the induced boundary-source statistics to give
qualitative interpretations of the images in representative scattering
configurations.

	\section{Resolution Analysis via the \texorpdfstring{\(Q\)}{Q}-Statistic}
	\label{sec:q-distance-resolution}
	
	In this section, we define the \(Q\)-statistic and its regularized form \(Q_\tau\) from the far-field second moment $\Sigma$, interpret them variationally as minimal statistically weighted costs of synthesizing the test far-field signal, and use \(I_{Q,\tau}=1/Q_\tau\) for imaging. The relation \(\Sigma\propto FF^*\) under uniform plane-wave illumination reinterprets several classical sampling methods as distinct spectral weighting rules within a common statistical signal space, thus yielding their statistical counterparts for general random illumination.

A comparison of plane-wave illumination with illumination by near-field point sources
shows that the latter can resolve the boundary below the classical
half-wavelength limit, highlighting the influence of the boundary-source
statistics on resolution. We introduce a correlation-free boundary-source limit to characterize the noise-dependent resolution by the one-sided
half-height width of the boundary transition of the imaging functional, and derive its
asymptotics for disks and simply connected domains with analytic boundaries.

	\subsection{Statistical synthesis cost and the \texorpdfstring{\(Q\)}{Q}-statistic}
	\label{subsec:exact-q-distance}
	
	This subsection introduces the statistical synthesis cost and the associated $Q$-statistic, which quantify the cost of reproducing a test far-field pattern from the statistical signal space. Let $\{(\lambda_j,u_j)\}$ denote the positive eigenpairs of $\Sigma$, $\Sigma u_j=\lambda_j u_j$ with orthonormal $u_j$ and eigenvalues $\lambda_j$ arranged in nonincreasing order. Throughout this section, let $X_\Gamma=L^2(\Gamma)$ and $H=L^2(\mathbb S^1)$. The test far field corresponding to a point source at $z$ is $\phi_z(\hat x)=e^{-ik\hat x\cdot z}\in H$.  For brevity, write $\mathcal K:=\mathcal K_{s,\Gamma}$ the second-moment operator of the induced boundary sources defined on $X_\Gamma$, so that $\Sigma=A_\Gamma \mathcal K A_\Gamma^*$.

	\begin{definition}
    For $f\in H$, define the $\Sigma$-weighted synthesis cost by
    \[
    \mathcal E_0(f,\Sigma):=
    \begin{cases}
        \displaystyle
        \sum_{\lambda_j>0}
        \frac{|\langle f,u_j\rangle_H|^2}{\lambda_j},
        & f\in\operatorname{Ran}(\Sigma^{1/2}),\\[1ex]
        +\infty,
        & f\notin\operatorname{Ran}(\Sigma^{1/2}).
    \end{cases}
    \]
    The exact $Q$-statistic is
    \[
    Q_0(z,\Sigma):=\mathcal E_0(\phi_z,\Sigma),
    \qquad z\in\mathbb R^2.
    \]
\end{definition}
Here, synthesis means reproducing a prescribed test far field from the
statistical signal space, with the cost measured in the metric induced
by $\Sigma$. The inverse weights $\lambda_j^{-1}$ assign a larger cost
to components along weakly excited signal directions.
For $f\in\operatorname{Ran}(\Sigma^{1/2})$, this cost can
equivalently be written as
$\mathcal E_0(f,\Sigma)=\|\Sigma^{\dagger/2}f\|_H^2$,
where $\Sigma^\dagger$ is the Moore--Penrose inverse.

To give this data-space cost a source-side interpretation,
define the associated source-energy space and extended norm by
\begin{equation}
    \mathcal H_{\mathcal K}
    :=\operatorname{Ran}(\mathcal K^{1/2}),
    \qquad
    \|q\|_{\mathcal K}^2:=
    \begin{cases}
        \|\mathcal K^{\dagger/2}q\|_{X_\Gamma}^2,
        & q\in\mathcal H_{\mathcal K},\\
        +\infty,
        & q\notin\mathcal H_{\mathcal K}.
    \end{cases}
    \label{eq:source-covariance-norm}
\end{equation}
The following proposition shows that the data-space synthesis cost
equals the minimal source energy, weighted by the source statistics,
needed to reproduce the prescribed far field.

	\begin{proposition}
		\label{prop:minimum-second-moment-energy}
		Under the factorization $\Sigma=A_\Gamma \mathcal K A_\Gamma^*$, for every $f\in H$,
		\begin{equation}
			\mathcal E_0(f,\Sigma)=\min_{A_\Gamma q=f}\|q\|_{\mathcal K}^2.
			\label{eq:exact-mimic-minimum}
		\end{equation}
		Both quantities equal $+\infty$ when
		$f\notin\operatorname{Ran}(\Sigma^{1/2})$.  
	\end{proposition}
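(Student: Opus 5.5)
The plan is to reduce the identity to the standard Hilbert-space fact that the range of $BB^*$ to the power $1/2$ coincides with the range of $B$, with matching minimal-norm preimages. I set $B:=A_\Gamma\mathcal K^{1/2}\colon X_\Gamma\to H$, a bounded operator with $BB^*=A_\Gamma\mathcal K A_\Gamma^*=\Sigma$. (Here I regard $\mathcal K$ as a bounded nonnegative self-adjoint operator on $X_\Gamma$, so $\mathcal K^{1/2}$ is defined.) The polar decomposition, or equivalently Douglas' lemma \cite{Douglas1966} as already used in Proposition~\ref{thm:nonuniform-range-equivalence}, gives $\operatorname{Ran}(B)=\operatorname{Ran}(\Sigma^{1/2})$. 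More precisely, $B=\Sigma^{1/2}V$ with $V$ a partial isometry from $\overline{\operatorname{Ran}B^*}=(\ker B)^\perp$ onto $\overline{\operatorname{Ran}\Sigma^{1/2}}$. Consequently, for $f\in\operatorname{Ran}(\Sigma^{1/2})$,
\[
\min\{\|w\|_{X_\Gamma}^2: Bw=f\}=\|\Sigma^{\dagger/2}f\|_H^2=\sum_{\lambda_j>0}\frac{|\langle f,u_j\rangle_H|^2}{\lambda_j},
\]
with the minimizer $w_*=V^*\Sigma^{\dagger/2}f\in(\ker B)^\perp$. The last equality is the spectral expansion of $\Sigma^{\dagger/2}$ in the eigenbasis $\{u_j\}$.

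The second step transfers this from $w$-variables to source variables $q$. If $\|q\|_{\mathcal K}<\infty$, then $q=\mathcal K^{1/2}w$ with $w:=\mathcal K^{\dagger/2}q\in(\ker\mathcal K^{1/2})^\perp$, and $\|q\|_{\mathcal K}^2=\|w\|^2$. Moreover $A_\Gamma q=Bw$. Conversely, any $w$ with $Bw=f$ yields $q=\mathcal K^{1/2}w\in\mathcal H_{\mathcal K}$ with $A_\Gamma q=f$ and $\|q\|_{\mathcal K}^2=\|P w\|^2\le\|w\|^2$, where $P$ is the orthogonal projection onto $(\ker\mathcal K^{1/2})^\perp$. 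Hence
\[
\inf_{A_\Gamma q=f}\|q\|_{\mathcal K}^2=\inf_{Bw=f}\|w\|^2 .
\]
Sources $q\notin\mathcal H_{\mathcal K}$ contribute $+\infty$ and do not affect the infimum. The infimum is attained at $q_*=\mathcal K^{1/2}w_*$, so it is a minimum.

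The case $f\notin\operatorname{Ran}(\Sigma^{1/2})$ then follows from the same correspondence. Any finite-cost $q$ with $A_\Gamma q=f$ would give $f=Bw\in\operatorname{Ran}B=\operatorname{Ran}\Sigma^{1/2}$, a contradiction, so the right-hand side is $+\infty$. The left-hand side is $+\infty$ by definition.

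The main obstacle is functional-analytic bookkeeping rather than any hard estimate. In the plane-wave setting, \eqref{eq:source-second-moment-sandwich} defines $\mathcal K$ through $S_\Gamma^{-*}$ with values in Sobolev spaces, so one must check that $\mathcal K$ is bounded, nonnegative and self-adjoint on $X_\Gamma=L^2(\Gamma)$. The route I would take is to observe that $\mathcal K=\mathbb E[\varphi\otimes\varphi^*]$ with $\mathbb E\|\varphi\|_{L^2}^2<\infty$, which makes it trace class and nonnegative on $L^2(\Gamma)$. I would also verify that $A_\Gamma\colon L^2(\Gamma)\to H$ is bounded, which holds because its kernel is smooth. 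After these checks the argument above applies verbatim.
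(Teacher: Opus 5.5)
Your proposal is correct and follows essentially the same route as the paper. Both arguments set $B=A_\Gamma\mathcal K^{1/2}$ with $BB^*=\Sigma$, reduce the source-side minimum to $\min_{Bw=f}\|w\|_{X_\Gamma}^2$ via the minimum-preimage characterization of $\|\cdot\|_{\mathcal K}$, and then use $\operatorname{Ran}B=\operatorname{Ran}\Sigma^{1/2}$ to identify the minimal-norm preimage with $\|\Sigma^{\dagger/2}f\|_H^2$.

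The only differences are technical: you build the minimal-norm solution from the polar decomposition $B=\Sigma^{1/2}V$ rather than from the singular-value expansion of $B$, and you write out the $q\leftrightarrow w$ transfer, including attainment of the minimum, more explicitly than the paper does.
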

\begin{proof}
Set $B=A_\Gamma \mathcal K^{1/2}$, so that $\Sigma=BB^*$.
Since the source energy is infinite outside
$\mathcal H_{\mathcal K}$, restricting the infimum to this
space does not change its value.
The minimum-preimage characterization
\[\|q\|_{\mathcal K}^2=\min_{\mathcal K^{1/2}w=q}\|w\|_{X_\Gamma}^2,\qquad q\in\mathcal H_{\mathcal K},\]
therefore gives
\[\min_{\substack{q\in X_\Gamma\\A_\Gamma q=f}}\|q\|_{\mathcal K}^2=\min_{Bw=f}\|w\|_{X_\Gamma}^2.\]
Since $BB^*=\Sigma$, the positive singular values of $B$
are $\sqrt{\lambda_j}$, with corresponding left singular
vectors $u_j$. Note that \(\operatorname{Ran}(B)=\operatorname{Ran}(\Sigma^{1/2}).\) For $f$ in this range, the singular-value expansion of $B$
yields a minimum-norm solution $w^\dagger$ satisfying
\[
    \min_{Bw=f}\|w\|_{X_\Gamma}^2
    =
    \|w^\dagger\|_{X_\Gamma}^2
    =
    \sum_{\lambda_j>0}
    \frac{|\langle f,u_j\rangle_H|^2}{\lambda_j}
    =
    \mathcal E_0(f,\Sigma).
\]
\end{proof}
	
    \begin{remark}[Statistically weighted source energy]
Under the nonresonance condition, $A_\Gamma$ is injective
(see Section~\ref{subsec:correlation-free-resolution}).
Thus, whenever $A_\Gamma q=\phi_z$ is solvable, its
solution $q_z$ is unique and
$Q_0(z,\Sigma)=\|q_z\|_{\mathcal K}^2$. 
The source statistics therefore
set the metric for the source cost.
Modes that carry more statistical power (larger eigenvalues) cost
less. In the correlation-free case \(\mathcal K=\kappa I\), no mode is
preferred, and the cost becomes \(\kappa^{-1}\|q\|_{L^2(\Gamma)}^2\). This lets
us study radiation and noise separately from source-statistical weighting.
\end{remark}
	
	For practical imaging, the exact cost requires regularization since finite-energy synthesis may be infeasible and the inverse weights $\lambda_j^{-1}$ are sensitive to errors in the measured second moment. We therefore replace exact synthesis by a penalized approximation.

    For $\tau>0$, define the regularized synthesis cost and the associated $Q$-statistic by
	\begin{equation}
		\mathcal E_\tau(f,\Sigma)
		:=\left\langle f,(\Sigma+\tau I)^{-1}f\right\rangle_H,
		\qquad
		Q_\tau(z,\Sigma):=\mathcal E_\tau(\phi_z,\Sigma).
		\label{eq:regularized-mimic-energy}
	\end{equation}
    The following variational identity makes this relaxation precise.
	
	\begin{proposition}[Source--signal variational formula]
		\label{thm:regularized-mimic-variational}
		Under the factorization $\Sigma=A_\Gamma \mathcal K A_\Gamma^*$, for every $f\in H$ and
		$\tau>0$,
		\begin{equation}
			\mathcal E_\tau(f,\Sigma)
			=
			\min_{q\in\mathcal H_{\mathcal K}}
			\left\{
			\|q\|_{\mathcal K}^2+\frac1\tau\|A_\Gamma q-f\|_H^2
			\right\}.
			\label{eq:regularized-mimic-variational}
		\end{equation}
		The unique minimizer is
		\[
		q_\tau=\mathcal K A_\Gamma^*(\Sigma+\tau I)^{-1}f.
		\]
	\end{proposition}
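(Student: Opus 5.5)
The plan is to follow the proof of Proposition~\ref{prop:minimum-second-moment-energy} and reduce to a Tikhonov problem in the whitened variable. Set $B=A_\Gamma\mathcal K^{1/2}:X_\Gamma\to H$, so that $\Sigma=BB^*$. Every $q\in\mathcal H_{\mathcal K}$ can be written as $q=\mathcal K^{1/2}w$. The preimage $w$ is unique if we restrict to $w\in\overline{\operatorname{Ran}(\mathcal K^{1/2})}=\ker(\mathcal K^{1/2})^\perp$, and then $\|q\|_{\mathcal K}^2=\|w\|_{X_\Gamma}^2$. For a general preimage we only have the minimum-preimage inequality $\|q\|_{\mathcal K}^2\le\|w\|^2$, with equality exactly at the component orthogonal to the kernel. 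Since $Bw$ does not change when we add elements of $\ker\mathcal K^{1/2}$, the right-hand side of \eqref{eq:regularized-mimic-variational} equals
\[
\min_{w\in X_\Gamma}J(w),\qquad J(w):=\|w\|_{X_\Gamma}^2+\frac1\tau\|Bw-f\|_H^2 .
\]
Any minimizer of $J$ automatically lies in $\ker(B)^\perp\subset\ker(\mathcal K^{1/2})^\perp$, so nothing is lost by passing to $J$.

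Next I minimize $J$. It is strictly convex, continuous and coercive on the Hilbert space $X_\Gamma$, so it has a unique minimizer $w_\tau$. This minimizer is characterized by the normal equation $(\tau I+B^*B)w_\tau=B^*f$. Using the push-through identity $(\tau I+B^*B)^{-1}B^*=B^*(\tau I+BB^*)^{-1}$, which follows from $B^*(\tau+BB^*)=(\tau+B^*B)B^*$, we get
\[
w_\tau=B^*(\Sigma+\tau I)^{-1}f .
\]

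To evaluate the minimum value, write $g=(\Sigma+\tau I)^{-1}f$. Then $Bw_\tau-f=\Sigma g-(\Sigma+\tau)g=-\tau g$ and $\|w_\tau\|^2=\langle BB^*g,g\rangle=\langle\Sigma g,g\rangle$. Hence
\[
J(w_\tau)=\langle\Sigma g,g\rangle+\tau\|g\|^2=\langle(\Sigma+\tau I)g,g\rangle=\langle f,(\Sigma+\tau I)^{-1}f\rangle_H=\mathcal E_\tau(f,\Sigma).
\]
Mapping back gives the minimizer $q_\tau=\mathcal K^{1/2}w_\tau=\mathcal K^{1/2}\mathcal K^{1/2}A_\Gamma^*(\Sigma+\tau I)^{-1}f=\mathcal K A_\Gamma^*(\Sigma+\tau I)^{-1}f$, which is the stated formula.

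The main obstacle is uniqueness of the minimizer in $q$. Suppose $q'$ is another minimizer. Let $w'$ be its preimage orthogonal to $\ker\mathcal K^{1/2}$, so $\|q'\|_{\mathcal K}^2=\|w'\|^2$ and $J(w')$ equals the minimum. Uniqueness of $w_\tau$ then forces $w'=w_\tau$, hence $q'=q_\tau$. I will also check that $\|\cdot\|_{\mathcal K}$ is well defined via the pseudo-inverse, including the case where $\mathcal K$ is not injective; this is exactly where the choice of the orthogonal preimage is needed. Finally, boundedness of $A_\Gamma$ on $\mathcal H_{\mathcal K}$ with the source-side factorization \eqref{eq:second-moment-factorization} should be taken from the setup, so that $B$ is a bounded operator.
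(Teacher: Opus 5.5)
Your proposal is correct and follows essentially the same route as the paper: you whiten with $B=A_\Gamma\mathcal K^{1/2}$, use the minimum-preimage property to pass to the Tikhonov functional in $w$, solve the normal equation via the push-through identity, and apply $\mathcal K^{1/2}$ to recover $q_\tau$. Two small refinements over the paper's write-up: you evaluate the minimum value directly through $g=(\Sigma+\tau I)^{-1}f$, where the paper substitutes in the singular system, and you prove explicitly that the $q$-minimizer is unique, which the paper leaves implicit.
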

	
	\begin{proof}
		Set $B=A_\Gamma \mathcal K^{1/2}$.  By the minimum-preimage property
		in~\eqref{eq:source-covariance-norm}, the source-side minimum equals
		\[
		\min_{w\in X_\Gamma}
		\left\{\|w\|_{X_\Gamma}^2+\tau^{-1}\|Bw-f\|_H^2\right\}.
		\]
		The normal equation is
		$(B^*B+\tau I)w=B^*f$, whose solution is
		$w_\tau=B^*(BB^*+\tau I)^{-1}f$.  Substitution in the singular
		system of $B$ gives the value
		$\left\langle f,(BB^*+\tau I)^{-1}f\right\rangle_H$.  Applying
		$\mathcal K^{1/2}$ to $w_\tau$ gives the stated $q_\tau$.
	\end{proof}
        In particular, taking \(f=\phi_z\) in formula \eqref{eq:regularized-mimic-variational} gives the variational characterization of the regularized \(Q\)-statistic used
		throughout the paper, which balances statistical source energy against synthesis error. By \eqref{eq:regularized-mimic-energy}, 
 the regularized cost is finite and bounded by $\|f\|^2_H/\tau$ for every $f\in H$. Moreover, with the value $+\infty$ allowed, monotone convergence gives
\(
    \mathcal E_\tau(f,\Sigma)
    \nearrow
    \mathcal E_0(f,\Sigma)
    \text{ as }\tau\downarrow0.
\)

 Since $Q_\tau(z,\Sigma)$ is small at sampling points $z$ whose test far
field can be synthesized at low cost, we use its reciprocal as the
imaging indicator: 
\begin{equation}
    I_{Q,\tau}(z,\Sigma):=\frac{1}{Q_\tau(z,\Sigma)}.
    \label{eq:q-imaging-indicator}
\end{equation}
The indicator $I_{Q,\tau}$ is therefore large where the synthesis cost is low, and shares the Capon-type form
\cite{Capon1969}. While
Capon's method has been analyzed theoretically
for uncorrelated random-source identification \cite{GriesmaierRaumer2022}, 
the random scattering-induced sources on the boundary of the unknown obstacle considered here are generally correlated, with their statistics determined jointly by random illumination and nonlocal scattering interactions.
In practice, $\Sigma$ is replaced by the measured second moment $\widehat\Sigma$.
	
	\subsection{A spectral view of sampling methods}
	\label{subsec:comparison-classical-indicators}
	This subsection compares $Q$ with classical sampling methods. Under uniform full-aperture plane-wave illumination,
    \[T:=2\pi\Sigma=FF^*.\]
    Under the nonresonance condition, \(Q\)-imaging, factorization method (FM), multiple signal classification (MUSIC) and direct sampling method (DSM) can all be characterized in terms of the same eigensystem, differing primarily in their spectral weighting strategies, specifically, inverse weighting for \(Q\) and FM, hard-cutoff projection for MUSIC, and direct correlation for DSM. Consequently, inverse weighting amplifies the weakly excited modes \cite{RezacHarris2022},
whereas DSM involves no inversion and
localizes by correlating the data directly with the test far field.
Because the far-field second moment $\Sigma$ can be estimated from
measurements under general illumination, its eigensystem gives a
statistical counterpart of each sampling indicator directly. These
counterparts are summarized in Table~\ref{tab:sampling-spectral-weights}.

   \begin{table}[!htbp]
    \centering
    \caption{Statistical indicators under general illumination.
    Here \(\lambda\ge0\) denotes an eigenvalue of \(\Sigma\),
    and \(\Pi_r\) is the orthogonal projection onto the retained
    signal eigenspace.}
    \label{tab:sampling-spectral-weights}
    \begin{tabular}{@{}lll@{}}
        \toprule
        Method & Imaging indicator & Spectral treatment \\
        \midrule
        \(Q\)-imaging
        & \(\bigl\langle
        \phi_z,(\Sigma+\tau I)^{-1}\phi_z
        \bigr\rangle_H^{-1}\)
        & inverse weighting with \((\lambda+\tau)^{-1}\) \\[1ex]
        FM
        & \(\bigl\langle
        \phi_z,(\Sigma^{1/2}+\alpha I)^{-1}\phi_z
        \bigr\rangle_H^{-1}\)
        & inverse weighting with \((\sqrt{\lambda}+\alpha)^{-1}\) \\[1ex]
        MUSIC
        & \(\|(I-\Pi_r)\phi_z\|_H^{-2}\)
        & binary weighting \\[1ex]
        DSM
        & \(\langle\phi_z,\Sigma\phi_z\rangle_H\)
        & direct weighting with \(\lambda\) \\
        \bottomrule
    \end{tabular}
\end{table}
    \begin{remark}[Connection between \(Q\)-imaging and the linear sampling method]
In the uniform full-aperture plane-wave setting, it can be proved that
\[
Q_\tau(z,T)
=
\min_{g\in H}
\left\{
\|g\|_H^2+\tau^{-1}\|Fg-\phi_z\|_H^2
\right\}.
\]
The minimizer is
\[
g_{\tau,z}
=
(F^*F+\tau I)^{-1}F^*\phi_z,
\]
which is the Tikhonov-regularized solution of the far-field
equation $Fg=\phi_z$ used in the linear sampling method (LSM).
Thus
$Q_\tau(z,T)=\|g_{\tau,z}\|_H^2+\tau^{-1}\|Fg_{\tau,z}-\phi_z\|_H^2$
is the minimal value of a combined cost: the squared norm of the
solution plus the data misfit, weighted by $\tau^{-1}$. The standard
linear sampling method (LSM), by contrast, typically uses only the
norm $\|g_{\tau,z}\|_H$ of the regularized solution and ignores the
misfit term. 
\end{remark}
    \begin{remark}[Direct-correlation limit]
Under random illumination, statistical DSM is the expected squared correlation between the measured and test fields, averaged over the
illumination ensemble,
\[
I_{\mathrm{DSM}}(z)
= \langle \phi_z,\Sigma\phi_z\rangle_H
= \mathbb E_\omega
  \bigl|\langle y(\omega),\phi_z\rangle_H\bigr|^2.
\]
 With $J_z(y)=J_0(k|z-y|)$, the factorization \(\Sigma=A_\Gamma \mathcal K A_\Gamma^*\) gives
\(I_{\rm DSM}(z)\propto\langle J_z,\mathcal KJ_z\rangle_{L^2(\Gamma)}\),
which reduces to a \(J_0^2\)-profile $\int_\Gamma J_0(k|z-y|)^2\,ds(y)$ when \(\mathcal K=I\). This Bessel kernel describes the usual diffraction-scale localization mechanism for such correlation-based methods. Moreover, with $\tau\gg\|\Sigma\|$,
\[
Q_\tau(z,\Sigma)
=\frac{2\pi}{\tau}-\frac{I_{\rm DSM}(z)}{\tau^2}
+O\!\left(\frac{\|\Sigma\|^2}{\tau^3}\right),
\quad I_{Q,\tau}(z)
=
\frac{\tau}{2\pi}
+\frac{I_{\rm DSM}(z)}{4\pi^2}
+O\!\left(\frac{\|\Sigma\|^2}{\tau}\right).
\]
Thus statistical DSM gives the leading spatial contrast of the strongly
regularized   $Q$-indicator after subtraction of the constant background.
\end{remark}
	We illustrate the comparison on two geometries: a smooth pear-shaped obstacle and two radius-\(0.7\)
	disks centered at \((\pm0.8,0)\). The wavelength is \(\lambda=0.5\), so
	the disk gap is \(0.2=0.4\lambda\). A perturbed nonnegative second-moment matrix $\widehat\Sigma$ with $\|\widehat\Sigma-\Sigma\|_2/\|\Sigma\|_2=10^{-2}$ is used. We take
	\(\tau=10^{-2}\|\Sigma\|_2\) for \(Q\) and \(\alpha=\sqrt\tau\) for FM in Table~\ref{tab:sampling-spectral-weights}. MUSIC retains 26 leading modes.

	\begin{figure}[htbp]
		\centering
		\captionsetup{font={small,stretch=1.05}}
		\subfloat[\(Q\)]{\includegraphics[width=0.24\linewidth]{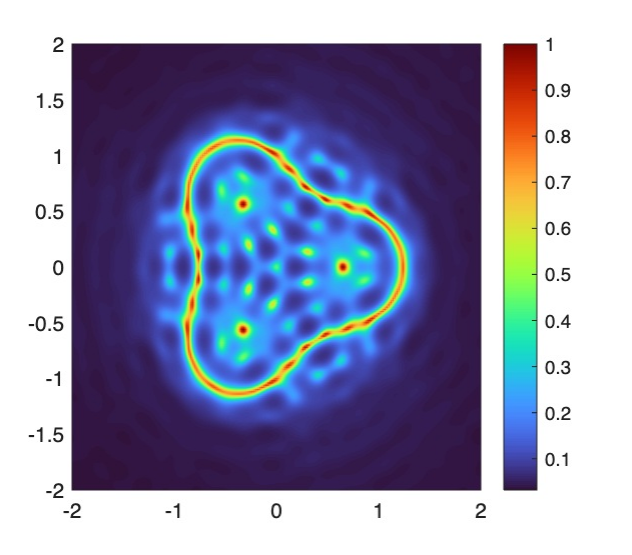}}\hfill
		\subfloat[FM]{\includegraphics[width=0.24\linewidth]{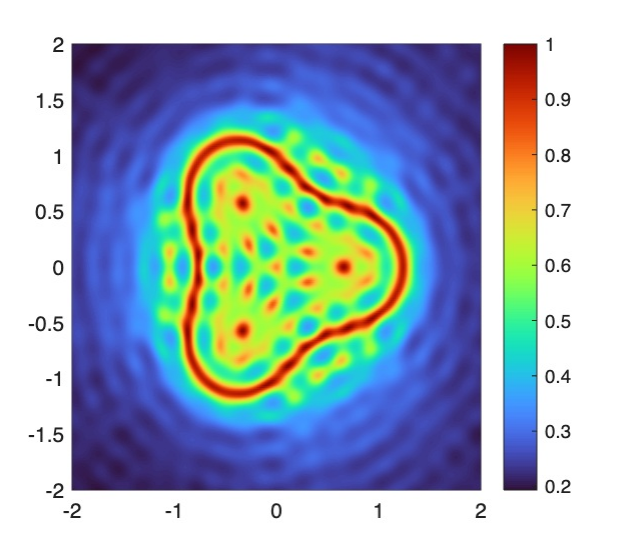}}\hfill
		\subfloat[MUSIC]{\includegraphics[width=0.24\linewidth]{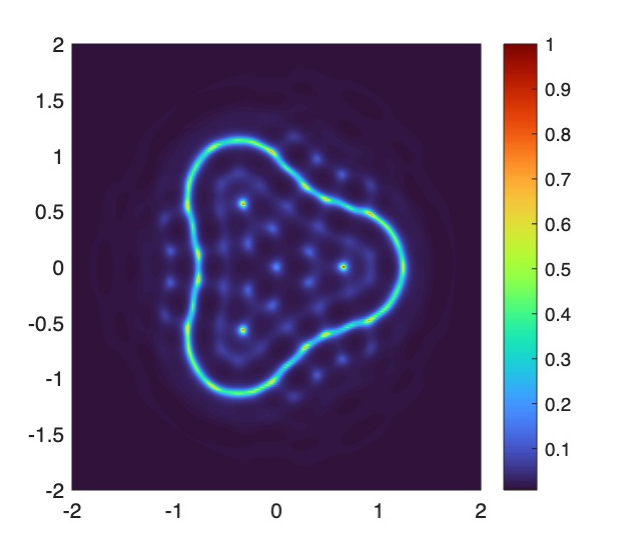}}\hfill
		\subfloat[ DSM]{\includegraphics[width=0.24\linewidth]{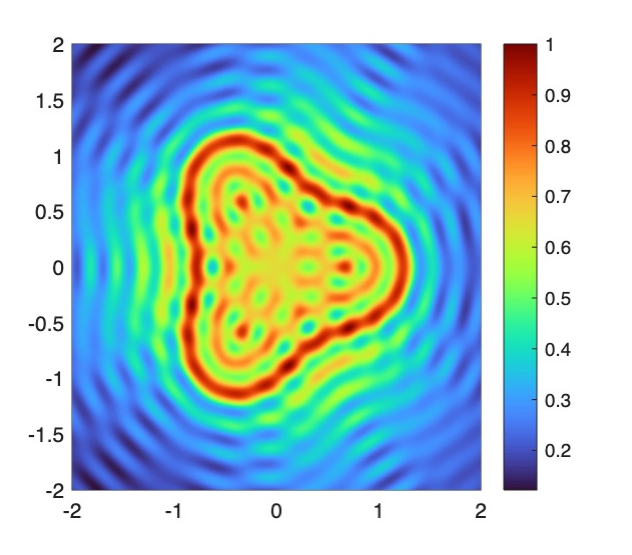}}\\[0.4ex]
		\subfloat[\(Q\)]{\includegraphics[width=0.24\linewidth]{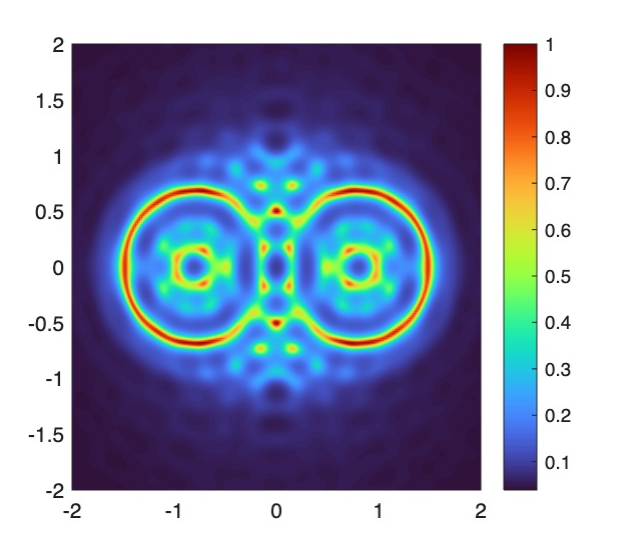}}\hfill
		\subfloat[FM]{\includegraphics[width=0.24\linewidth]{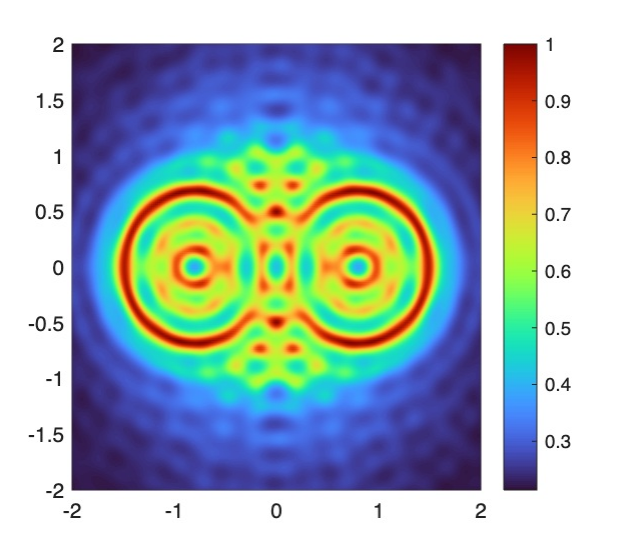}}\hfill
		\subfloat[MUSIC]{\includegraphics[width=0.24\linewidth]{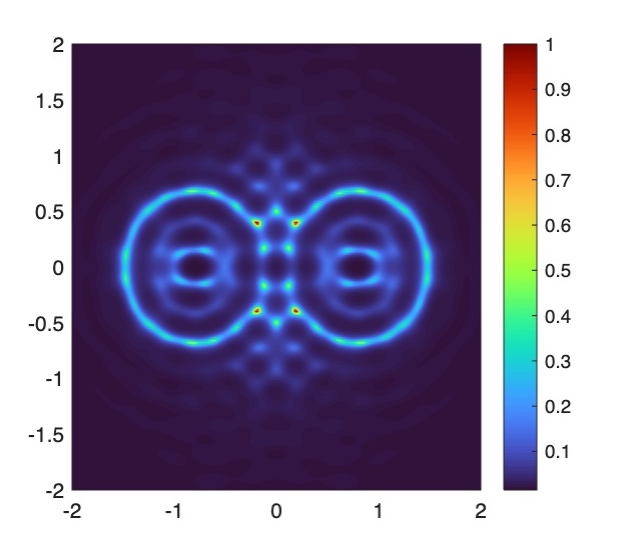}}\hfill
		\subfloat[ DSM]{\includegraphics[width=0.24\linewidth]{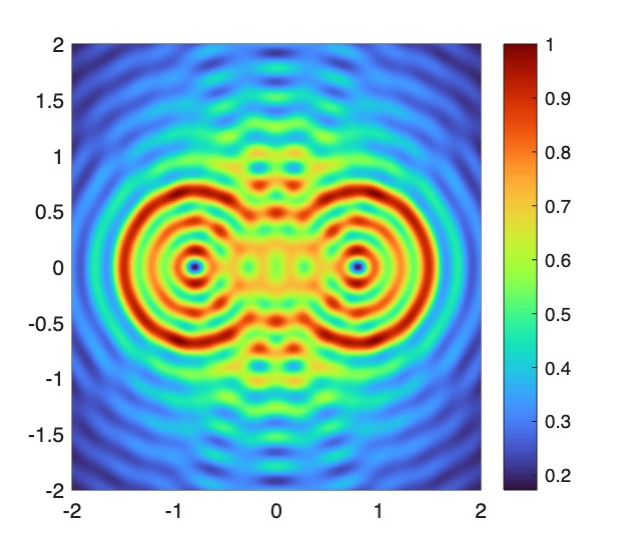}}\\[0.5ex]
		\caption{Uniform full-aperture plane-wave illumination: pear (top) and
			two nearby disks (bottom), at \(\lambda=0.5\) and 1\% relative
			second-moment perturbation. Each indicator is normalized by its own
			maximum.}
		\label{fig:classical-comparison}
	\end{figure}
	
	Figure~\ref{fig:classical-comparison} shows how the choice of spectral
weighting changes the images reconstructed from the same scattering
data. Because $Q_\tau$ applies the strongest inverse weighting, it
resolves fine boundary structure but also amplifies oscillations inside
the scatterer. FM uses a weaker inverse
weighting and accordingly gives a broader response with brighter
interiors. MUSIC, which projects onto a selected signal subspace, can
produce a thin and clean contour, but its quality depends sensitively
on the chosen rank. DSM, which involves no inversion, gives a more
diffuse image with oscillatory sidelobes.
	
	\subsection{Random point-source illumination and super-resolution}
	\label{subsec:general-incidence-Q}

	The preceding comparison fixes the illumination, yet all four methods struggle to recover the facing arcs of the two closely spaced disks. This shared difficulty suggests that limited boundary visibility may reflect constraints in the available data, not merely the choice of imaging functional. We therefore keep the \(Q\)-statistic fixed and vary the illumination to examine how the resulting data affect boundary visibility.

Figure~\ref{fig:point-source-improvement} shows this dependence for the two
disks of Figure~\ref{fig:classical-comparison}. Independent and uniformly distributed point sources placed near the boundaries
reveal the facing arcs more clearly than uniform plane-wave illumination. In particular, the two disks remain well resolved even when
their separation is below the conventional half-wavelength diffraction limit, attaining super-resolution. 
As the source circles approach the boundaries, the images also come closer to
the correlation-free case; see Appendix \ref{appendix-point source} for a justification. These observations point to two separate limits on
resolution: correlations among the excited boundary sources, and radiation
and noise. The next subsection quantifies the second limit, and
Section~\ref{sec:source-second-moment-imaging} then brings back the source statistics.

	\begin{figure}[htbp]
		\centering
		\captionsetup{font={small,stretch=1.05}}
		\subfloat[\(d=0.05\)]{\includegraphics[width=0.24\linewidth]{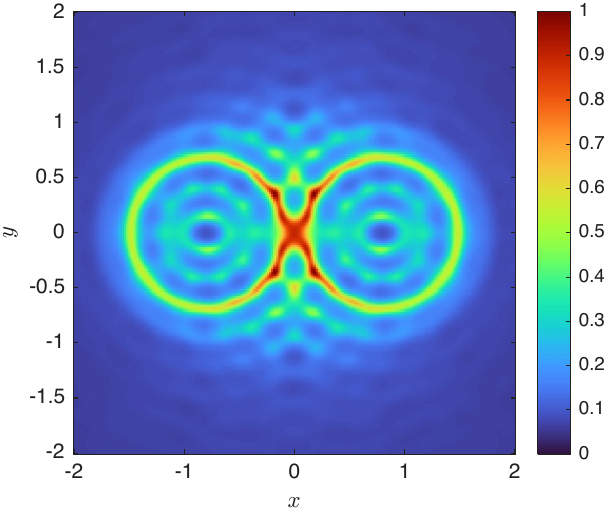}}\hfill
		\subfloat[\(d=0.025\)]{\includegraphics[width=0.24\linewidth]{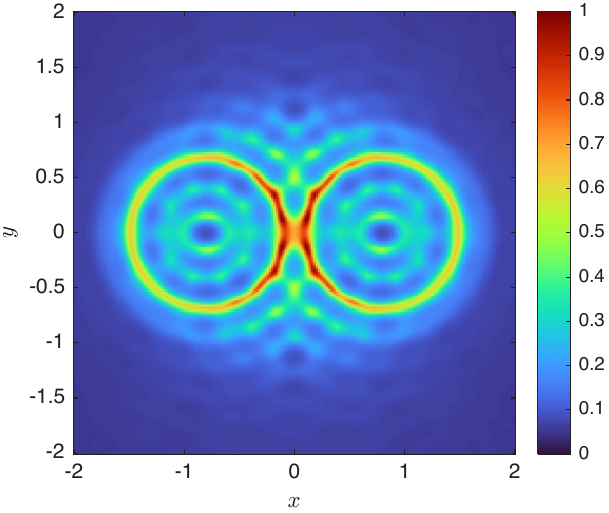}}\hfill
		\subfloat[\(d=0.01\)]{\includegraphics[width=0.24\linewidth]{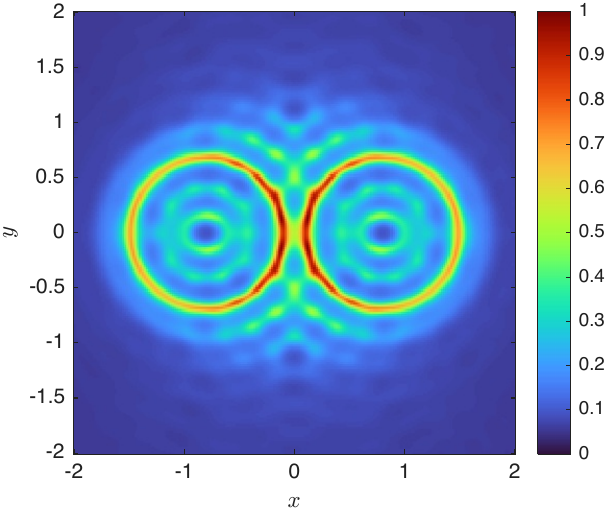}}\hfill
		\subfloat[Correlation-free]{\includegraphics[width=0.24\linewidth]{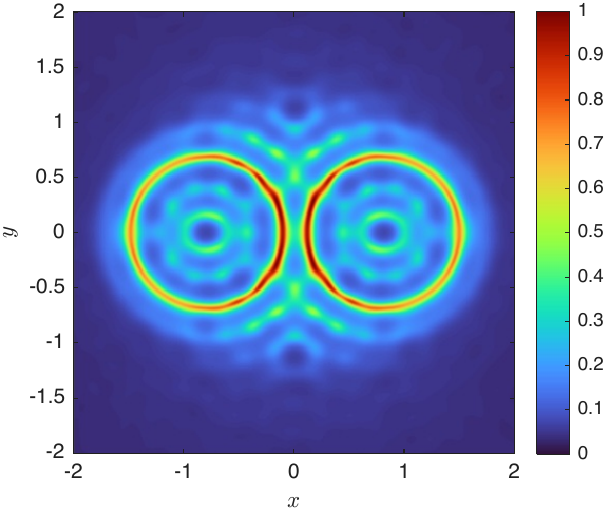}}
		\caption{Illumination-dependent \(Q\)-imaging of two radius-\(0.7\)
			disks centered at \((-0.8,0)\) and \((0.8,0)\).
			(a)--(c) Point sources are uniformly distributed on two exterior
			concentric circles, one around each disk, of radius \(0.7+d\).
			The far field is observed in 100 uniform directions, with
			\(\tau=\|\widehat\Sigma-\Sigma\|_2\) in (a)--(c).
			(d) Correlation-free boundary emission, with covariance proportional
			to the identity in \(L^2(\Gamma)\). The wavelength is
			\(\lambda=0.5\), the relative far-field second-moment perturbation
			is \(10^{-2}\), and each   image \(1/Q_\tau\) is
			normalized by its own maximum.}
		\label{fig:point-source-improvement}
	\end{figure}

	\subsection{Correlation-free resolution limit}
	\label{subsec:correlation-free-resolution}
To isolate radiation and noise, we first consider correlation-free boundary
sources with \(\mathcal K \propto  I\).
This can be understood through rescaled short-correlation limits
(Remark~\ref{rem:short-correlation-justification}), rather than the second-moment operator of an \(L^2(\Gamma)\)-valued random source with finite expected energy. Its radiated second moment is nevertheless well defined and trace class. 

		The same far-field reference is also obtained as a limit of physical monopole
		illumination as in Appendix \ref{appendix-point source}. In each realization, a monopole of fixed amplitude \(a\ne0\)
		is placed uniformly with respect to arclength on the exterior parallel
		boundary \(\Gamma_\delta=\{y+\delta\nu(y):y\in\Gamma\}\).
		Under the nonresonance condition,
		Theorem~\ref{thm:monopole-farfield} proves that its scattered far-field
		second moment satisfies
		\[
		\left\|\Sigma_\delta-\kappa_0 A_\Gamma A_\Gamma^*\right\|
		\longrightarrow0,\qquad
		\kappa_0={|a|^2}/{|\Gamma|},\qquad \delta\downarrow0. 
		\]
		The limit \(\kappa_0A_\Gamma A_\Gamma^*\) is the far-field second moment
produced by correlation-free boundary sources with \(\mathcal K=\kappa_0I\).

		After normalization, we analyze this reference through the one-sided
		half-height width of the boundary transition and establish sharp
		small-noise asymptotics at fixed frequency for disks and simply connected
		analytic boundaries.

\subsubsection{Correlation-free model and ideal identification}
\label{subsubsec:cf-model-identification}

We consider the idealized correlation-free
boundary source second moment $\mathcal K=I$.  We normalize
the common source strength and far-field factor and set
\begin{equation}
 A:=\gamma_k^{-1}A_\Gamma,\qquad
 (Aq)(\hat x)=\int_\Gamma e^{-ik\hat x\cdot y}q(y)\,ds(y),
 \qquad
 \Sigma:=AA^*.
 \label{eq:normalized-ideal-radiation}
\end{equation}
Assume that $D$ has a $C^2$ boundary and connected exterior,
and under the nonresonance condition, $A:L^2(\Gamma)\to L^2(\mathbb S^1)$ is compact and
 injective with dense range \cite{coltonInverseAcousticElectromagnetic2019}. 

Let
\[Av_j=\sigma_j u_j,\qquad
 A^*u_j=\sigma_j v_j,\qquad
 \sigma_1\ge\sigma_2\ge\cdots>0,\]
where $\{v_j\}$ and $\{u_j\}$ are orthonormal bases of
$L^2(\Gamma)$ and $L^2(\mathbb S^1)$, respectively.
Since $\Sigma u_j=\sigma_j^2u_j$, the spectral
representation of the ideal $Q$-statistic becomes
\begin{equation}
 Q_0(z,\Sigma)
 =
 \sum_{j=1}^{\infty}
 \frac{|\langle\phi_z,u_j\rangle_H|^2}{\sigma_j^2}
 =
 \sum_{j=1}^{\infty}|c_j(z)|^2,\quad c_j(z):=\frac{\langle\phi_z,u_j\rangle_H}{\sigma_j}.
 \label{eq:cf-picard-energy}
\end{equation}
By Picard's criterion, $Q_0(z)$ is finite precisely when
the radiation equation $Aq_z=\phi_z$ admits a solution
in $L^2(\Gamma)$. In that case, the solution is unique and
$Q_0(z)=\|q_z\|_{L^2(\Gamma)}^2$. The standard single-layer range argument
\cite{coltonInverseAcousticElectromagnetic2019,KirschGrinberg2008} then yields an exact geometric characterization of the obstacle
\begin{equation}
 Q_0(z)<\infty
 \quad\Longleftrightarrow\quad
 z\in D.
 \label{eq:ideal-identification}
\end{equation}
Here, interior solvability follows from Green's representation,
while exterior and boundary points are excluded by the
point-source singularity and the regularity of single-layer
potentials with $L^2(\Gamma)$ densities.

To move from exact identification in the noise-free case to resolution
at finite noise, we next choose the regularization parameter in terms of
the noise level, and then measure resolution by the outward
half-height width of the indicator. 

\subsubsection{Regularization and boundary resolution}

The ideal range criterion identifies the obstacle but does not quantify
the boundary transition at finite noise. We therefore use the ideal regularized
$Q$-statistic
\begin{equation}
    Q_\tau(z)
    =\sum_{j=1}^{\infty}
      \frac{|\langle\phi_z,u_j\rangle_H|^2}{\sigma_j^2+\tau}
    =\sum_{j=1}^{\infty}
      \frac{\sigma_j^2}{\sigma_j^2+\tau}|c_j(z)|^2,
    \qquad \tau>0,
    \label{eq:Q-soft-versus-hard}
\end{equation}
which is finite and strictly positive for every sampling point.
The filter factor is close to one when $\sigma_j^2\gg\tau$
and suppresses contributions relative to the exact source cost
when $\sigma_j^2\ll\tau$. We therefore describe the number of modes above the
regularization level by
\[
    J_\tau:=\#\{j:\sigma_j^2\ge\tau\}.
\]
This index characterizes the effective spectral band and the statistic
$Q_\tau$ itself retains the full spectrum.

Noise enters this reference analysis through the choice of $\tau$.
Given the second-moment error bound
$\|\widehat\Sigma-\Sigma\|\le \epsilon_\Sigma$, the calibration $\tau=C\epsilon_\Sigma$ with a fixed $C>1$ is adopted to suppress the weak, noise-dominated signal modes.
 In the following, we first establish the boundary-transition laws for the regularized statistic $Q_\tau$ formed with the exact second moment $\Sigma=AA^*$ while a detailed stability analysis for perturbed second moments is given in Section \ref{nnoise}.

\begin{definition}[One-sided half-height resolution]
    \label{def:analytic-transition-length}
    Let \(x\in\Gamma\) be a regular boundary point with exterior unit normal
	\(\nu(x)\). Fix an exterior normal collar of width $r_0>0$ and define the boundary-normalized   profile by
    \[
        \mathcal I_{\tau,x}(r)
        :=\frac{Q_\tau(x)}
        {Q_\tau(x+r\nu(x))},
        \qquad  0\le r<r_0.
    \]
    Its one-sided half-height width is
    \begin{equation}
        w_{1/2}(x;\tau)
        :=\inf\left\{
        0\le r<r_0:
        \mathcal I_{\tau,x}(r)\le\frac12
        \right\},
        \label{eq:analytic-halfheight-definition}
    \end{equation}
    with $\inf\varnothing=+\infty$, indicating that no half-height
    crossing occurs within the prescribed neighborhood.
\end{definition}

Thus $w_{1/2}(x;\tau)$ measures the first outward distance at which
the   image falls to half its boundary value, equivalently
where $Q_\tau$ reaches twice its boundary value. 
The following analysis examines how this width depends on the
noise level. It is worth mentioning that the half-height level is motivated by the familiar full width
at half maximum (FWHM) criterion. For the asymptotic limits proved below, any other fixed level in $(0,1)$
yields the same asymptotic order up to a prefactor.
Throughout, $f\sim g$ when $f/g\to1$
, and $f\asymp g$ when
$cg\le f\le Cg$ for fixed constants $0<c\le C<\infty$
in the stated limit.

	\subsubsection{Asymptotic resolution for a disk}
    We determine the asymptotic half-height width of the boundary transition for a disk. Let $D=B_R(0)$, with $R$ and $k$ fixed as $\tau\downarrow0$, and set
\[
 \rho_\tau:=\frac{\tau}{\|A\|^2},
 \qquad L_\tau:=\log(1/\rho_\tau).
\]
Since the angular Fourier modes diagonalize $AA^*$, the corresponding eigenvalues and regularization weights are
\[ \lambda_n=4\pi^2R\,J_n(kR)^2,\qquad
 p_{n,\tau}:=\frac{\lambda_n}{\lambda_n+\tau},
 \qquad n\in\mathbb Z.\]
For $r=|z|$, the testing field satisfies
$|\langle\phi_z,u_n\rangle_H|^2=2\pi J_n(kr)^2$.
Substitution into the spectral formula for $Q_\tau$ gives
\begin{equation}
 Q_\tau^{\rm disk}(r)
 =2\pi\sum_{n\in\mathbb Z}\frac{J_n(kr)^2}{\lambda_n+\tau}
 =\frac{1}{2\pi R}\sum_{n\in\mathbb Z}p_{n,\tau}
       \left|\frac{J_n(kr)}{J_n(kR)}\right|^2.
 \label{eq:disk-exact-Q}
\end{equation}
Here $J_n(kR)\ne0$ by the nonresonance condition.
	For each $\tau>0$, this radial function is finite and smooth. As $\tau\downarrow0$,
it converges to the exact source energy
for $r<R$ but diverges for $r\ge R$. Regularization therefore replaces the ideal inside--outside
distinction by a boundary transition of finite thickness. For sufficiently small $\tau$, let
\begin{equation}
 N_\tau:=\max\{n\ge0:\lambda_n\ge\tau\}
 \label{eq:disk-effective-order}
\end{equation}
denote the largest angular order above the regularization level.
Since nonzero angular orders occur in pairs,
$J_\tau=2N_\tau+1$ for sufficiently small $\tau$. The following theorem establishes the boundary-transition scale $R/N_\tau$
and its dependence on regularization.

\begin{theorem}[Disk boundary profile and resolution]
	\label{thm:disk-halfheight-resolution}
	Under the nonresonance condition, as $\tau\downarrow0$,
	\begin{equation}
		\frac{Q_\tau^{\rm disk}(R(1+t/N_\tau))}{Q_\tau^{\rm disk}(R)}
		\longrightarrow
		G(t):=\int_0^1e^{2ts}\,ds=\frac{e^{2t}-1}{2t},
		\label{eq:disk-boundary-layer-profile}
	\end{equation}
	locally uniformly for $t\ge0$, with $G(0)=1$.
	Consequently, the one-sided half-height width satisfies
	\begin{equation}
		w_{1/2}^{\rm disk}(\tau)
		\sim t_{1/2}\frac{R}{N_\tau}
		\sim 2t_{1/2}R\,
		\frac{\log\log(1/\rho_\tau)}{\log(1/\rho_\tau)},
		\label{eq:disk-halfheight-noise-law}
	\end{equation}
	where $t_{1/2}=0.6282156\ldots$ is the unique positive solution of $G(t)=2$.
\end{theorem}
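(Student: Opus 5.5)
The approach is to work directly from the spectral formula \eqref{eq:disk-exact-Q} and use uniform large-order Bessel asymptotics. For fixed $x=kR$ and $n\to\infty$, the Debye/power-series asymptotics give
\[
J_n(x)\sim \frac{(x/2)^n}{n!}.
\]
Hence $\lambda_n=4\pi^2RJ_n(kR)^2$ decays super-exponentially. Consequently $\log(1/\lambda_n)\sim 2n\log n$, and the defining relation $\lambda_{N_\tau}\ge\tau>\lambda_{N_\tau+1}$ gives $2N_\tau\log N_\tau\sim L_\tau$. Inverting this yields
\[
N_\tau\sim \frac{L_\tau}{2\log L_\tau}\sim \frac{\log(1/\rho_\tau)}{2\log\log(1/\rho_\tau)}.
\]
The normalization by $\|A\|^2$ only shifts $L_\tau$ by $O(1)$, which is harmless. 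This accounts for the second equivalence in \eqref{eq:disk-halfheight-noise-law}, once the first equivalence, involving $t_{1/2}R/N_\tau$, is established.

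For the profile \eqref{eq:disk-boundary-layer-profile}, set $r=R(1+t/N_\tau)$ and study the ratio
\[
\left|\frac{J_n(kr)}{J_n(kR)}\right|^2=(1+t/N_\tau)^{2n}(1+o(1)),
\]
uniformly for $n$ in the relevant range $n\asymp N_\tau$. This follows from the ratio $J_n(\theta x)/J_n(x)=\theta^n(1+O(x^2/n))$, valid for $\theta$ near $1$. Write $n=sN_\tau$. For $0\le s\le 1$ this ratio equals $e^{2ts}(1+o(1))$, and one must also show that the filter $p_{n,\tau}$ acts like a sharp cutoff on the scale $N_\tau$. Indeed,
\[
\frac{\lambda_{n}}{\tau}\approx\exp\!\bigl(-2(n-N_\tau)\log N_\tau\bigr),
\]
because consecutive ratios satisfy $\lambda_{n+1}/\lambda_n\sim (kR/2n)^2$. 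So $p_{n,\tau}$ passes from $\approx1$ to $\approx0$ within $O(1)$ indices of $N_\tau$, and beyond $N_\tau$ it decays geometrically with ratio $O(N_\tau^{-2})$. This decay beats the growth factor $(1+t/N_\tau)^2$ of the summand. The sum in \eqref{eq:disk-exact-Q} is therefore a Riemann sum:
\[
2\pi R\,Q_\tau^{\rm disk}(r)=\sum_{|n|\le N_\tau}e^{2t|n|/N_\tau}(1+o(1))+O(1)\cdot e^{2t}.
\]
The left side is $\sim 2N_\tau\int_0^1e^{2ts}\,ds$, while at $t=0$ it is $\sim 2N_\tau$. Taking the ratio gives $G(t)$. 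Local uniformity in $t$ holds because all error terms are uniform on compact $t$-sets. The small-$n$ terms, where the Bessel ratio is not close to $\theta^n$, number $O(1)$ and contribute only $O(1)$ compared with $N_\tau$.

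The half-height consequence follows from this profile convergence. $G$ is continuous and strictly increasing, with $G(0)=1$ and $G\to\infty$, so $G(t)=2$ has the unique root $t_{1/2}$. By locally uniform convergence, for any $\epsilon>0$ the normalized profile lies below $2$ at $t_{1/2}-\epsilon$ and above $2$ at $t_{1/2}+\epsilon$ for small $\tau$. It remains to exclude an earlier crossing for $t\in[0,t_{1/2}-\epsilon]$, which is covered by the same uniform convergence on that compact interval. Larger $r$ within the collar are irrelevant, since the infimum picks the first crossing. Thus $w_{1/2}\sim t_{1/2}R/N_\tau$.

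The main obstacle is making the Bessel ratio and the cutoff estimates uniform in $n$ across the transition window. The difficulty is the indices just above $N_\tau$, where $p_{n,\tau}$ is neither $0$ nor $1$. I would handle this with explicit two-sided bounds $J_n(x)=\frac{(x/2)^n}{n!}\bigl(1+O(x^2/n)\bigr)$, which are valid for all $n\ge n_0(x)$. From these, the consecutive-ratio estimate and the tail bound follow. The tail beyond $N_\tau$ then contributes only $O(1)$, which is $o(N_\tau)$.
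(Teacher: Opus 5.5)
Your proposal is correct and follows essentially the same route as the paper's proof via Lemma~\ref{lem:disk-regularized-estimates}. Both arguments use the power-series asymptotics $J_n(y)=\frac{(y/2)^n}{n!}(1+O(n^{-1}))$ and the super-geometric decay of $\lambda_{n+1}/\lambda_n$ to show that $\sum_{n\le N_\tau}(1-p_{n,\tau})$ and $\sum_{n>N_\tau}p_{n,\tau}$ are $O(1)$. They then take a Riemann-sum limit, and use the sandwich $\lambda_{N_\tau}\ge\tau>\lambda_{N_\tau+1}$ to obtain $N_\tau\sim L_\tau/(2\log L_\tau)$.

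One minor tightening: the $O(1/n)$ relative error in the Bessel ratio holds for all $n\ge n_0$. Summing it over $n\le N_\tau$ gives an error of $O(\log N_\tau)=o(N_\tau)$, as in the paper. That is cleaner than splitting into a range $n\asymp N_\tau$ plus ``$O(1)$ small-$n$ terms'', which leaves the intermediate range implicit.
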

\begin{proof}
	Write $N=N_\tau$. Lemma~\ref{lem:disk-regularized-estimates} in
	Appendix~\ref{app:disk-regularized-estimates} gives
	$Q_\tau^{\rm disk}(R)\sim N/(\pi R)$ and, uniformly on every fixed interval
	$0\le t\le T$,
	\[
	\frac{Q_\tau^{\rm disk}(R(1+t/N))}{Q_\tau^{\rm disk}(R)}
	=\frac1N\sum_{n=1}^{N}\left(1+\frac{t}{N}\right)^{2n}+o(1)
	\longrightarrow\int_0^1e^{2ts}\,ds.
	\]
	The last step is a uniform Riemann-sum limit, proving
	\eqref{eq:disk-boundary-layer-profile}.
	Since $G$ is strictly increasing from $1$ to infinity, local uniform convergence implies that the first level-$2$
crossing converges to $t_{1/2}$.
	Returning to the physical distance $tR/N$ yields
	$w_{1/2}^{\rm disk}(\tau)\sim t_{1/2}R/N$.
	Finally, the same lemma gives $N\sim L_\tau/(2\log L_\tau)$,
	which proves \eqref{eq:disk-halfheight-noise-law}.
\end{proof}
\begin{remark}[Role of wavenumber and noise]
For a disk, angular orders near $kR$ mark the transition
from radiating to evanescent modes.
At a fixed, moderate noise level that significantly affects the evanescent modes while only weakly perturbing the radiating ones, this suggests an effective bandwidth  $N_\tau\approx kR$ and hence the
conventional wavelength-scale width $R/N_\tau\approx k^{-1}$.
Our asymptotics instead fix $kR$ and let the noise level tend to zero,
and hence the calibrated relative regularization $\rho_\tau\to 0$. Eventually, $N_\tau\gg kR$, yielding a
transition width of order
\(
R\,\frac{\log\log(1/\rho_\tau)}{\log(1/\rho_\tau)}.
\)
Thus, reducing noise permits arbitrarily fine boundary transitions
in principle, although only at the slow logarithmic rate above.
\end{remark}
    For ease of visualization, we use the angularly truncated statistic
\[
Q_N^{\rm disk}(r)
:=\frac{1}{2\pi R}
\sum_{|n|\le N}
\left|\frac{J_n(kr)}{J_n(kR)}\right|^2.
\]
When $N=N_\tau$, the estimates in
Appendix~\ref{app:disk-regularized-estimates} show that this
hard-cutoff statistic and $Q_\tau^{\rm disk}$ have the same
limiting boundary-normalized profile $G(t)$, and hence the same
leading-order half-height width $t_{1/2}R/N_\tau$.
The following plots therefore illustrate the modal resolution
scale $R/N$; its dependence on regularization follows from
the asymptotic relation between $N_\tau$ and $\tau$.
	Figure~\ref{fig:diskwidth} visualizes the exterior transition for
	\(N=16,32,64\) on the same local window. The distance from the true
	boundary to the exterior half-height contour decreases as more angular
	modes are retained. 
	Figure~\ref{fig:disk-cutoff-halfheight} complements the visualization with
	radial profiles for \(N=24,48,96,192\) and directly computed first-crossing
	widths, approaching \(w_{1/2}\sim t_{1/2}R/N\).
	
	\begin{figure}[htbp]
		\centering
		\includegraphics[width=0.8\linewidth]{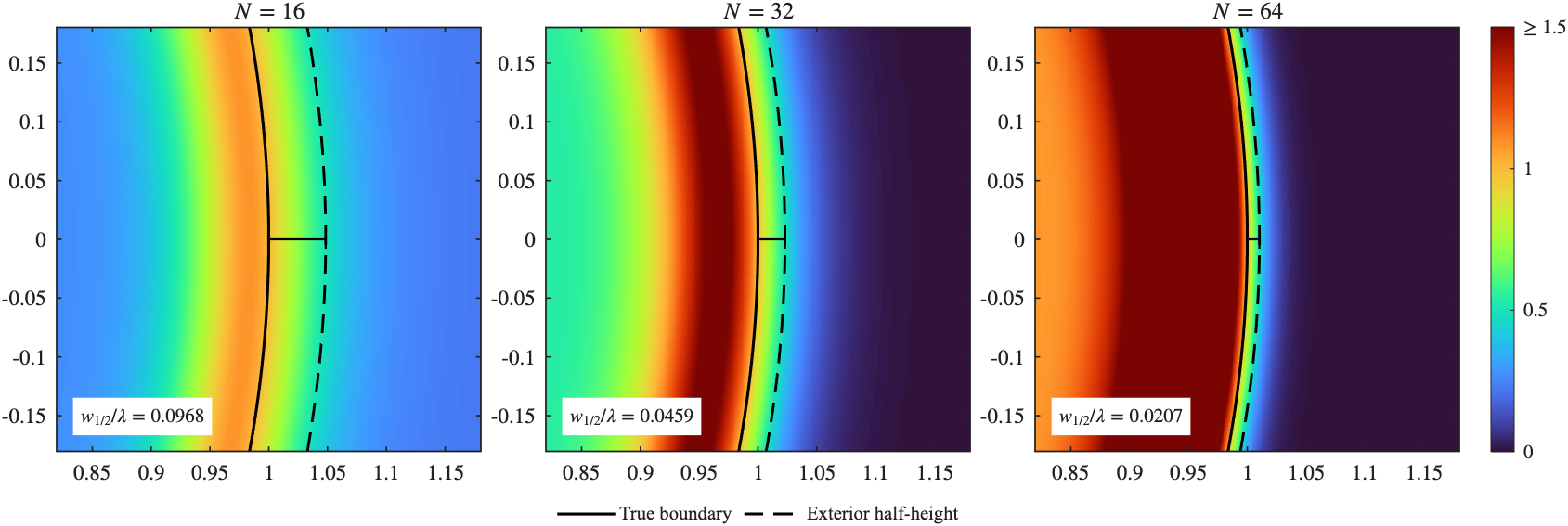}
		\caption{Visualization of the disk half-height resolution,
			with \(R=1\), \(\lambda=0.5\), and angular cutoffs
			\(N=16,32,64\) from left to right. Colors show
			\(Q_N(R)/Q_N(r)\) on a common scale, saturated above \(1.5\).
			Solid curves mark the true boundary, dashed curves mark the first
			exterior level \(Q_N(r)=2Q_N(R)\). The horizontal segments measure
			\(w_{1/2}\). 
            }
		\label{fig:diskwidth}
		\end{figure}
			
	\begin{figure}[htbp]
		\centering
		\subfloat[Radial transition at the boundary]{\includegraphics[width=0.35\linewidth]{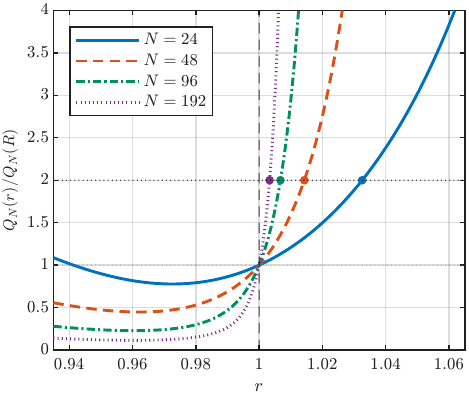}}\qquad
		\subfloat[Half-maximum width versus cutoff]{\includegraphics[width=0.35\linewidth]{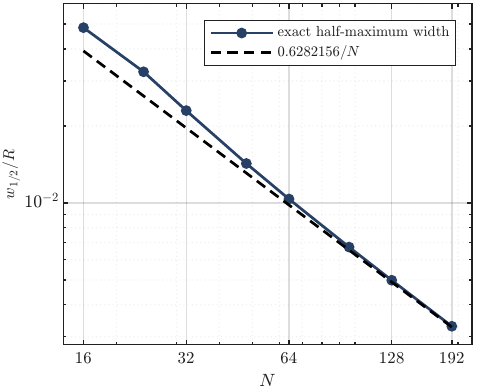}}
		\caption{Quantitative disk resolution analysis with \(R=1\) and \(\lambda=0.5\).
			(a) Boundary-normalized \(Q_N\) profiles; the vertical dashed line
			marks \(r=1\), and dots mark \(Q_N(r)/Q_N(1)=2\), equivalently
			the half-height level of the boundary-normalized   indicator.
			(b) Exact widths compared with \(t_{1/2}R/N\).}
		\label{fig:disk-cutoff-halfheight}
	\end{figure}

	\subsubsection{Sharp boundary-layer asymptotics for analytic boundaries}
\label{confo}
In this subsection, we extend the disk profile to the actual Tikhonov-regularized statistic
on a fixed bounded simply connected domain $D$ with regular analytic
Jordan boundary $\Gamma$. Let $\Phi$ map $\mathbb C\setminus\overline D$ conformally onto
$\{|w|>1\}$, with $\Phi(z)=z/\operatorname{cap}(\overline{D})+\mathcal O(1)$ at infinity, where 
$\operatorname{cap}(\overline{D})$ is the logarithmic capacity of $\overline{D}$ \cite{Saff2010}.
Analyticity permits conformal extension across $\Gamma$ \cite{Pommerenke1992}. In a fixed
sufficiently small collar, define
\begin{equation}
 g(z):=\log|\Phi(z)|,
 \qquad h_\Gamma(x):=\frac{1}{|\Phi'(x)|},
 \qquad x\in\Gamma.
 \label{eq:analytic-conformal-collar}
\end{equation}
Then $g=0$ on $\Gamma$, $g>0$ outside, and
\begin{equation}
 g(x+r\nu(x))=\frac{r}{h_\Gamma(x)}+\mathcal O(r^2)
 \label{eq:conformal-normal-scale}
\end{equation}
uniformly along $\Gamma$. The positive conformal scale $h_\Gamma$
converts conformal depth to physical normal distance and equals $R$
on a disk.

\begin{theorem}[Analytic-boundary profile and resolution]
\label{thm:analytic-boundary-resolution}
For a fixed bounded simply connected domain $D$ with regular analytic
Jordan boundary $\Gamma$, under the nonresonance condition, retain $G$ and $t_{1/2}$ from
Theorem~\ref{thm:disk-halfheight-resolution}, and set
$J_\tau=\#\{j:\sigma_j^2\ge\tau\}$,
$\rho_\tau=\tau/\|A\|^2$, and $L_\tau=\log(1/\rho_\tau)$.
As $\tau\downarrow0$,
\begin{equation}
 J_\tau\sim\frac{L_\tau}{\log L_\tau},
 \qquad Q_\tau(x)\sim\frac{J_\tau}{2\pi h_\Gamma(x)}.
 \label{eq:analytic-boundary-count}
\end{equation}
For every fixed $T>0$,
\begin{equation}
 \sup_{\substack{x\in\Gamma\\0\le t\le T}}
 \left|
 \frac{Q_\tau(x+2h_\Gamma(x)t\nu(x)/J_\tau)}{Q_\tau(x)}
 -G(t)
 \right|\longrightarrow0.
 \label{eq:analytic-boundary-profile}
\end{equation}
Consequently, the actual half-height width in
Definition~\ref{def:analytic-transition-length} satisfies, uniformly
in $x\in\Gamma$,
\begin{equation}
 \begin{aligned}
 w_{1/2}(x;\tau)
 =(2t_{1/2}+o(1))\,\frac{h_\Gamma(x)}{J_\tau}
 \sim 2t_{1/2}h_\Gamma(x)
       \frac{\log\log(1/\rho_\tau)}{\log(1/\rho_\tau)}.
 \end{aligned}
 \label{eq:analytic-halfheight-noise-law}
\end{equation}
\end{theorem}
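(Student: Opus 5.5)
The plan is to reduce the analytic-boundary case to a perturbation of the disk computation by transplanting everything to the exterior of the unit disk with the conformal map $\Phi$. The spectral data of $\Sigma=AA^*$ will be compared with a model operator whose singular functions are built from the Faber-type system $\Phi(z)^n$. The exterior Green-function geometry then gives $|\langle\phi_z,u_j\rangle|$ the same structure as $J_n(kr)$ in \eqref{eq:disk-exact-Q}.

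\textbf{Step 1: singular values.} First I would obtain asymptotics of the singular values of $A$ with $\log(1/\sigma_n^2)\sim 2n\log n$ for modes indexed by $|n|$, each order counted twice. The approach is to expand the kernel $e^{-ik\hat x\cdot y}$ via the Jacobi--Anger series, $\sum_n i^{-n} J_n(k|y|)e^{in(\theta_{\hat x}-\arg y)}$. In each harmonic, $y^n$ is then rewritten through the Faber polynomials of $\overline D$. The $n$-th Faber coefficient of a point $y\in\Gamma$ scales like $\operatorname{cap}(\overline D)^n\Phi(y)^n$ with $|\Phi(y)|=1$. This yields $\sigma_{\pm n}\approx (k\operatorname{cap}(\overline D)/2)^n/n!$ times a bounded, nonvanishing factor. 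Counting modes with $\sigma_j^2\ge\tau$ then inverts $2n\log n\approx L_\tau$ and gives $J_\tau\sim L_\tau/\log L_\tau$. Because only the leading log asymptotic is needed, crude two-sided bounds (minimax comparison with a finite-rank Faber truncation) suffice here.

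\textbf{Step 2: test coefficients and boundary value.} Next I would show that the normalized coefficient $c_j(z)$ of $\phi_z$ behaves, uniformly in a collar, like
\[
|c_{\pm n}(z)|^2 \approx \frac{1}{2\pi h_\Gamma(x)}\, e^{2n g(z)}\,(1+o(1)),\qquad n\le J_\tau,
\]
where $x$ is the foot point of $z$. This is the analogue of $|J_n(kr)/J_n(kR)|^2\approx (r/R)^{2n}$ for the disk. The factor $|\Phi'(x)|$ arises from the Jacobian relating arclength on $\Gamma$ to the angle on the unit circle, i.e.\ the density of the harmonic measure. Summing with the soft filter $p_{j,\tau}=\sigma_j^2/(\sigma_j^2+\tau)$, one observes that $\sigma_n^2$ decays superexponentially. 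Hence $p_{j,\tau}$ drops from nearly $1$ to nearly $0$ within $o(J_\tau)$ indices around the cutoff, exactly as in Lemma~\ref{lem:disk-regularized-estimates}. This gives $Q_\tau(x)\sim J_\tau/(2\pi h_\Gamma(x))$.

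\textbf{Step 3: profile and width.} I would then insert $z=x+2h_\Gamma(x)t\nu(x)/J_\tau$ and use \eqref{eq:conformal-normal-scale}, which gives $2ng(z)=4nt/J_\tau+O(J_\tau^{-1})$ for $n\le J_\tau/2$. Since each order $n$ counts twice, $N\approx J_\tau/2$, and the normalized sum becomes the Riemann sum $\frac1N\sum_{n\le N}e^{2tn/N}\to G(t)$, uniformly in $x$ and $t\le T$. The width law then follows verbatim from the monotonicity argument in Theorem~\ref{thm:disk-halfheight-resolution}. One additional check is needed: $Q_\tau(x+r\nu)/Q_\tau(x)$ must stay below $2$ on $r\le \epsilon J_\tau^{-1}$, and above it for $r$ of order one within the collar. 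This rules out spurious crossings, and it follows because the ratio is increasing in $g$ up to $o(1)$.

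The main obstacle is Step 2, together with its uniformity in Step 1. The singular vectors of $A$ are not exactly the Faber modes, so I must show that mixing between orders does not affect the leading behavior of $\sum_j p_{j,\tau}|c_j|^2$. The first approach I would try is to treat $A$ in the Faber basis as a triangular-dominant matrix, diagonal $\asymp(k\operatorname{cap}/2)^n/n!$, whose off-diagonal entries are smaller by factors $O(n^{-1})$. This uses analyticity of $\Gamma$ to control the Faber remainders geometrically. A Grunsky/Schur-complement perturbation argument would then show that the projections and weights coincide with the model up to $1+o(1)$ uniformly for $n\le(1+o(1))J_\tau/2$. The tail beyond the cutoff would be handled by crude exponential bounds, since $e^{2ng(z)}$ grows only exponentially while the filter decays superexponentially.
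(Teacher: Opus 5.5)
\textbf{There is a genuine gap: your Step 2 is left unproved, and its stated premise is not correct.} Step~1 is fine in spirit. The paper gets $\log\sigma_j=-\tfrac j2\log j+\mathcal O(j)$ more cheaply, by sandwiching $A^*$ between the inner and outer circle operators through Dirichlet solution maps: $\|Y_1\|^{-1}\sigma_j(A_{r_1})\le\sigma_j(A)\le\|Y_2\|\sigma_j(A_{r_2})$. Only this crude rate is needed. Step~2, however, carries the whole theorem, and you leave it as a hoped-for perturbation argument. Four points fail.

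\emph{First, the quantity is an exterior continuation.} You have $c_j(z)=\overline{\tilde v_j(z)}$, the value at an \emph{exterior} point of the Helmholtz continuation of a unit-norm singular function $v_j\in L^2(\Gamma)$. This continuation is unstable. An $O(n^{-1})$ closeness of $v_j$ to $\psi_n^\pm$ in $L^2(\Gamma)$ says nothing about $c_j(z)$ until you control the coefficients of the error against the growth $e^{mg(z)}$ of the basis. That control must hold uniformly for $n\le(1+\varepsilon)J_\tau/2$.

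\emph{Second, the tail is not handled.} Beyond the cutoff, the only a priori information is $\sum_{j>J'}p_{j,\tau}|c_j(z)|^2\le\tau^{-1}\|P_{>J'}\phi_z\|^2$. To make this $o(J_\tau)$ you would have to show that $\phi_z$ has superexponentially small components along the trailing singular vectors. That is again a statement about singular vectors, so "the filter decays superexponentially" does not close it. Nor is the bound $|c_j(z)|^2\lesssim e^{2ng(z)}$ for large $j$ available.

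\emph{Third, the triangular-dominance premise is false as stated.} In the natural column normalization the lower Faber coefficients of $y^n$ dominate the leading one. For example, the $F_{n-1}$-coefficient is $n\,b_0\operatorname{cap}(\overline D)^{n-1}$, and $\|y^n\|_{L^2(\Gamma)}^{1/n}\to\max_\Gamma|y|>\operatorname{cap}(\overline D)$ unless $\Gamma$ is a centred circle. Smallness appears only after row normalization, i.e.\ it comes from the superexponential grading. Turning a graded matrix $DX$, with $X$ merely bounded and boundedly invertible, into eigenvector control is exactly the part you do not supply.

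\emph{Fourth, the per-mode asymptotics for $c_{\pm n}$ are the wrong target.} Take a domain symmetric about the real axis, such as an ellipse. There $A^*A$ commutes with reflection, so (for simple singular values) the singular functions are even or odd. They are then close to multiples of $\operatorname{Re}p_n$ and $\operatorname{Im}p_n$, and $|c_j(z)|^2$ oscillates like $\cos^2$ or $\sin^2$ of $n\arg\Phi(z)$. At best, pair sums can be controlled.

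\textbf{The paper's route avoids singular vectors entirely.} It uses the variational formula
\[
Q_\tau(z)=\sup_{f\neq0}\frac{|\mathcal Hf(z)|^2}{\|\mathcal Hf\|_{L^2(\Gamma)}^2+\tau\|f\|^2}.
\]
Combined with the Fourier truncation bounds $\delta_M,\Theta_M$, this sandwiches $Q_\tau$ between the reproducing kernels $K_{M_\pm}$ of the Fourier--Bessel spaces $\mathcal B_{M_\pm}$, with $M_\pm\approx(1\pm\varepsilon)J_\tau/2$. The errors in the sandwich vanish superexponentially.

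The kernel is then computed in the basis $\psi_m^\pm=\mathcal V_k^\pm p_m$, the Vekua transforms of the Szeg\H{o} polynomials. Their Gram operator is $I+{}$compact and uniformly bounded below, which gives $K_M(z)=\|e_M(z)\|^2+o(M)$. Szeg\H{o} strong asymptotics then turn this into the Riemann sum for $G$.

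The essential point is that only an aggregate $o(M)$ error in a reproducing kernel is needed. Your route requires uniform per-mode control of exterior continuations, which is both harder and, as stated, not true.
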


The complete proof is given in
Appendix~\ref{app:analytic-radiation-estimates}.
The boundary-normalized profile $G$ is universal, while the local
conformal factor \(h_\Gamma(x)\) controls both the boundary value of the indicator \(I_{Q,\tau}(x)\sim\frac{2\pi h_\Gamma(x)}{J_\tau}\) from \eqref{eq:analytic-boundary-count} and the physical
transition width from \eqref{eq:analytic-halfheight-noise-law}. Thus, a larger conformal scale corresponds asymptotically to a brighter indicator and a wider physical boundary transition. 

For a disk, \(h_\Gamma=R\) and \(J_\tau=2N_\tau+1\) for sufficiently
small \(\tau\), so both the boundary value of \(I_{Q,\tau}\) and the
transition width are uniform along \(\Gamma\), recovering the preceding
disk law. For an ellipse with semiaxes \(a\ge b>0\) and parametrization
\(x(\theta)=(a\cos\theta,b\sin\theta)\),
\[
h_\Gamma(x(\theta))
=
\sqrt{a^2\sin^2\theta+b^2\cos^2\theta}.
\]
For a general analytic boundary, \(h_\Gamma\) is usually unavailable in closed form. We compute it from the harmonic-measure density \(\omega\) via
\(
h_\Gamma(\gamma(t))
=
|\gamma'(t)|/(2\pi\omega(t)) 
\)
where \(\omega\) is obtained by a Nystr\"om discretization of the logarithmic
equilibrium-potential equation \cite{Symm1967}.  With $\rho_\tau=10^{-3}$, the images in Figure~\ref{fig:conformal-modulation-numerics}(b)(c) are qualitatively consistent with the spatial dependence in the correlation-free model: regions with larger \(h_\Gamma\) tend to exhibit stronger boundary responses and broader transitions.

Thus conformal geometry alone can produce nonuniform contrast and
resolution under uniform, uncorrelated excitation. Interestingly, the enhanced brightness observed in concave regions in Figure \ref{fig:conformal-modulation-numerics}(b) differs from their weak
appearance in the scattering-induced image in Figure \ref{fig:conformal-modulation-numerics}(d), indicating that
this degradation cannot be explained by geometry alone and highlighting the role of nonuniform, correlated induced boundary sources discussed below.
\begin{figure}[htbp]
		\centering
         \subfloat[]{\includegraphics[width=0.20\linewidth]{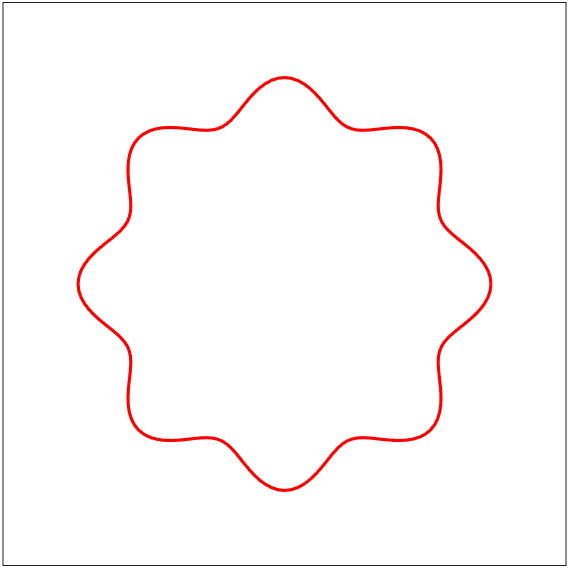}}\quad
         \subfloat[]{\includegraphics[width=0.23\linewidth]{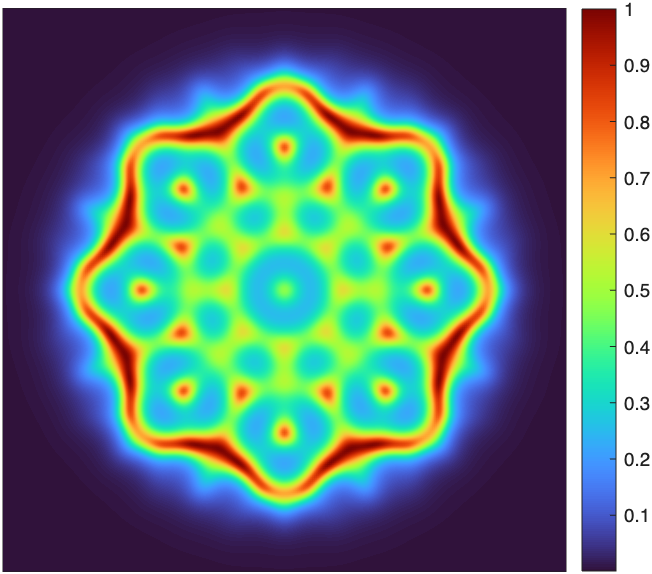}}\quad
         \subfloat[]{\includegraphics[width=0.23\linewidth]{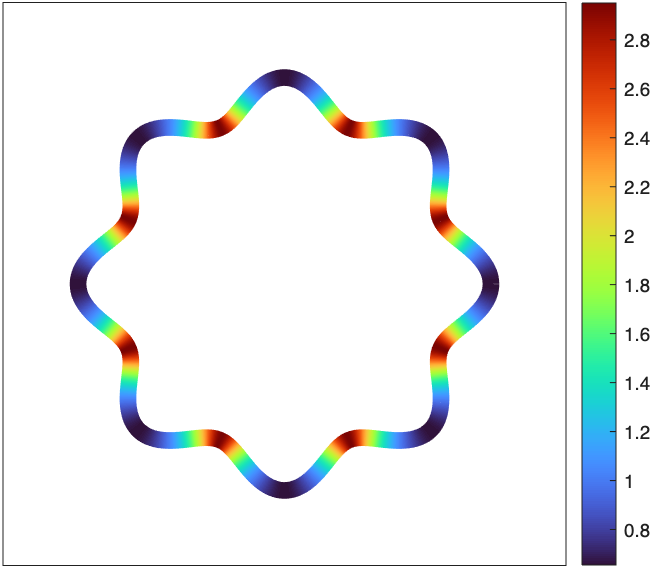}}\quad
         \subfloat[]{\includegraphics[width=0.22\linewidth]{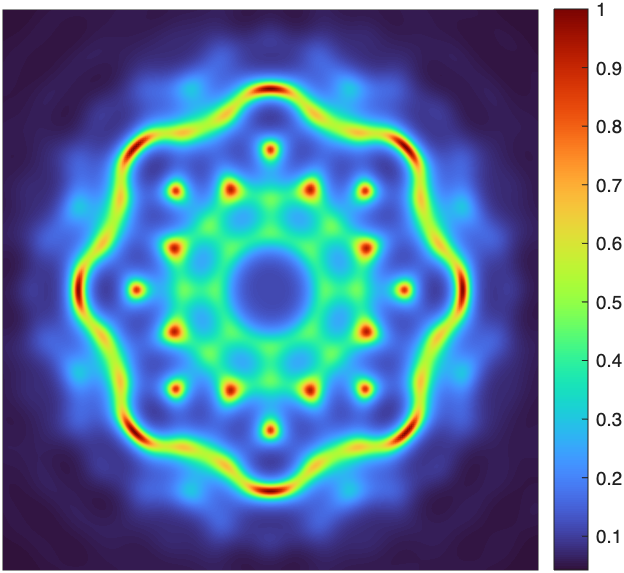}}
		\caption{
(a) The boundary geometry. (b) The correlation-free indicator
    \(I_{Q,\tau}\) with
    \(\tau=10^{-3}\|A\|^2\).
    (c) The conformal scale \(h_\Gamma\) along the boundary.  (d) The indicator
    \(I_{Q,\tau}\) constructed from the full-aperture
plane-wave far-field scattering data.
    }
		\label{fig:conformal-modulation-numerics}
		\end{figure}

        \subsubsection{Stability of the resolution laws}
\label{nnoise}
The regularization parameter $\tau$ is calibrated to the
second-moment error level and determines the effective spectral
bandwidth underlying the preceding resolution laws.
Using the exact correlation-free reference $\Sigma=AA^*$,
we have characterized the resulting boundary amplitude,
limiting profile, and sharp half-height width.
The following proposition shows that these same asymptotics
hold for the measured noisy statistic, provided that the second-moment error
remains below the regularization level by a fixed margin, namely, 
$\|\widehat\Sigma-\Sigma\|\le\delta_0\tau$
for a fixed $0\le\delta_0<1$.

\begin{proposition}[Noisy-data correlation-free boundary profile and resolution]
\label{prop:noisy-boundary-resolution}
Under the assumptions of
Theorem~\ref{thm:analytic-boundary-resolution}, let \(\Sigma=AA^*\) and define
\[
    \widehat Q_\tau(z)
    :=
    \left\langle
        \phi_z,(\widehat\Sigma+\tau I)^{-1}\phi_z
    \right\rangle_H,
\]
with \(
    \widehat\Sigma=\widehat\Sigma^*\)
    satisfying 
    \(\|\widehat\Sigma-\Sigma\|\le\epsilon_\Sigma\).
Suppose that $\tau\downarrow0$ and
\[
    \frac{\epsilon_\Sigma}{\tau}\le\delta_0<1
\]
for a fixed constant $\delta_0$.
Then, for every fixed $T>0$ and uniformly for $x\in\Gamma$,
\begin{equation}
\widehat Q_\tau(x)
    \sim\frac{J_\tau}{2\pi h_\Gamma(x)},\quad
    \sup_{\substack{x\in\Gamma\\0\le t\le T}}
    \left|
        \frac{\widehat Q_\tau(x+2h_\Gamma(x)t\nu(x)/J_\tau)}{\widehat Q_\tau(x)}
        -G(t)
    \right|
    \longrightarrow0.
    \label{eq:noisy-boundary-profile}
\end{equation}
Let $\widehat w_{1/2}(x;\tau)$ be defined as in
Definition~\ref{def:analytic-transition-length}, with $Q_\tau$
replaced by $\widehat Q_\tau$. Then, uniformly for $x\in\Gamma$,
\begin{equation}
    \widehat w_{1/2}(x;\tau)
    =
    (2t_{1/2}+o(1))
    \frac{h_\Gamma(x)}{J_\tau}
    \sim
    2t_{1/2}h_\Gamma(x)
    \frac{\log\log(1/\rho_\tau)}
         {\log(1/\rho_\tau)}.
    \label{eq:noisy-halfheight-width}
\end{equation}
\end{proposition}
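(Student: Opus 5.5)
The plan is a perturbation comparison in the Loewner order: sandwich $\widehat Q_\tau$ between exact regularized statistics at rescaled regularization levels, then feed both bounds into Theorem~\ref{thm:analytic-boundary-resolution}. Since $\widehat\Sigma$ is self-adjoint with $-\epsilon_\Sigma I\le\widehat\Sigma-\Sigma\le\epsilon_\Sigma I$ and $\epsilon_\Sigma\le\delta_0\tau$, we get
\[
\Sigma+(1-\delta_0)\tau I\;\le\;\widehat\Sigma+\tau I\;\le\;\Sigma+(1+\delta_0)\tau I .
\]
All three operators are positive with lower bound at least $(1-\delta_0)\tau>0$. Operator monotonicity of $s\mapsto -s^{-1}$ then yields, for every $z$,
\[
Q_{(1+\delta_0)\tau}(z)\le \widehat Q_\tau(z)\le Q_{(1-\delta_0)\tau}(z),
\]
where $Q_{\tau'}$ is the exact statistic built from $\Sigma=AA^*$.

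The second step shows that rescaling $\tau$ by a fixed factor $c_\pm=1\pm\delta_0$ does not change the leading asymptotics. Since $L_{c\tau}=L_\tau-\log c=L_\tau(1+o(1))$, the counting law in \eqref{eq:analytic-boundary-count} gives $J_{c\tau}\sim L_{c\tau}/\log L_{c\tau}\sim J_\tau$. Theorem~\ref{thm:analytic-boundary-resolution} then gives $Q_{c_\pm\tau}(x)\sim J_\tau/(2\pi h_\Gamma(x))$ uniformly on $\Gamma$. The sandwich therefore gives the boundary amplitude $\widehat Q_\tau(x)\sim J_\tau/(2\pi h_\Gamma(x))$.

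The third step handles the profile. For $z_t=x+2h_\Gamma(x)t\nu(x)/J_\tau$ I write
\[
\frac{Q_{c_+\tau}(z_t)}{Q_{c_-\tau}(x)}\le\frac{\widehat Q_\tau(z_t)}{\widehat Q_\tau(x)}\le\frac{Q_{c_-\tau}(z_t)}{Q_{c_+\tau}(x)} .
\]
To use \eqref{eq:analytic-boundary-profile} at level $c\tau$, I re-express $z_t$ in that level's variable, namely $z_t=x+2h_\Gamma t'\nu/J_{c\tau}$ with $t'=tJ_{c\tau}/J_\tau=t(1+o(1))$.

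This is the main obstacle. Uniform convergence in \eqref{eq:analytic-boundary-profile} is only along each family's own scaling, so I need $J_{c\tau}/J_\tau\to1$ rather than mere comparability, and the $o(1)$ must be uniform. The counting law supplies exactly this ratio limit, and $\log c$ is fixed. Local uniform convergence on $[0,2T]$ together with uniform continuity of $G$ then gives
\[
\frac{Q_{c\tau}(z_t)}{Q_{c\tau}(x)}\to G(t)\quad\text{uniformly in }x\in\Gamma,\ t\in[0,T].
\]
The ratio $Q_{c_\pm\tau}(x)/Q_{c_\mp\tau}(x)\to1$ uniformly in $x$ because $h_\Gamma$ is bounded above and below on $\Gamma$ (analytic boundary, $\Phi'\ne0$). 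Both bounds in the display therefore tend to $G(t)$, which proves \eqref{eq:noisy-boundary-profile}.

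Finally, the width law follows exactly as in the proof of Theorem~\ref{thm:disk-halfheight-resolution}. The function $G$ is continuous and strictly increasing with $G(0)=1$ and $G(t)\to\infty$. Uniform convergence on $[0,T]$ with $T>t_{1/2}$ forces the first crossing of level $2$ to occur at $t=t_{1/2}+o(1)$, uniformly in $x$. For small $\tau$ this crossing lies inside the collar, since $2h_\Gamma T/J_\tau<r_0$. Hence $\widehat w_{1/2}(x;\tau)=(2t_{1/2}+o(1))h_\Gamma(x)/J_\tau$. Substituting $J_\tau\sim L_\tau/\log L_\tau$ gives \eqref{eq:noisy-halfheight-width}.

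It remains to rule out a spurious earlier crossing for $t$ near $0$. The lower bound on the ratio tends to $G(t)\ge1$ uniformly, while the level sought is $2$. So no crossing occurs on $[0,t_{1/2}-\eta]$ for small $\tau$, because there $G\le2-\eta'$ and the ratio is within $o(1)$ of $G$.
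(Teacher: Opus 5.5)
Your proposal is correct and follows essentially the same route as the paper: sandwich $\widehat Q_\tau$ in the Loewner order between exact statistics at nearby regularization levels, use $J_{s}/J_\tau\to1$ so that Theorem~\ref{thm:analytic-boundary-resolution} applies after rescaling $t$, and read off the width from the strict monotonicity of $G$. The differences are cosmetic. The paper uses the levels $\tau\pm\epsilon_\Sigma$ and first transfers the unnormalized limit $\tfrac{2\pi h_\Gamma(x)}{J_\tau}\widehat Q_\tau(z_\tau(x,t))\to G(t)$, then divides by its value at $t=0$. You instead use the fixed levels $(1\pm\delta_0)\tau$ and sandwich the normalized ratio directly.
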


\begin{proof}
Self-adjointness and the perturbation bound imply
\[
    \Sigma+(\tau-\epsilon_\Sigma)I
    \le
    \widehat\Sigma+\tau I
    \le
    \Sigma+(\tau+\epsilon_\Sigma)I.
\]
Since $\tau-\epsilon_\Sigma\ge(1-\delta_0)\tau>0$,
inversion reverses the operator order and gives
\begin{align}\label{compp}
    Q_{\tau+\epsilon_\Sigma}(z)
    \le
    \widehat Q_\tau(z)
    \le
    Q_{\tau-\epsilon_\Sigma}(z).
\end{align}
With $s_\pm=\tau\pm\epsilon_\Sigma$, since
\[
    L_{s_\pm}
    =
    L_\tau
    -
    \log\left(1\pm\frac{\epsilon_\Sigma}{\tau}\right)
    =
    L_\tau+O(1),
\]
the asymptotic $J_s\sim L_s/\log L_s$ yields
\(
    a_\pm:=J_{s_\pm}/J_\tau\longrightarrow1.
\)
Moreover,
\[
    z_\tau(x,t):=x+\frac{2h_\Gamma(x) t}{J_{\tau}}\nu(x)
    =
    x+\frac{2h_\Gamma(x)(a_\pm t)}{J_{s_\pm}}\nu(x).
\]
Applying the ideal boundary profile asymptotics \eqref{eq:analytic-boundary-profile} at
regularization level $s_\pm$, we obtain
\[
    \frac{2\pi h_\Gamma(x)}{J_\tau}
    Q_{s_\pm}(z_\tau(x,t))
    =
    a_\pm\bigl[G(a_\pm t)+o(1)\bigr]
    \longrightarrow G(t),
\]
uniformly for $x\in\Gamma$ and $0\le t\le T$.
The preceding comparison \eqref{compp} gives 
\begin{equation}
    \sup_{\substack{x\in\Gamma\\0\le t\le T}}
    \left|
        \frac{2\pi h_\Gamma(x)}{J_\tau}
        \widehat Q_\tau(z_\tau(x,t))
        -G(t)
    \right|
    \longrightarrow0.
\end{equation}
Taking $t=0$ gives the boundary amplitude, and division by this
positive boundary limit proves \eqref{eq:noisy-boundary-profile}.
Fix $0<a<t_{1/2}$. Since $G$ is strictly increasing and
$G(t_{1/2})=2$, for all sufficiently small $\tau$ the measured
ratio is below $2$ throughout $0\le t\le t_{1/2}-a$ and above
$2$ at $t=t_{1/2}+a$, uniformly in $x$. Hence
\[
    2(t_{1/2}-a)\frac{h_\Gamma(x)}{J_\tau}
    \le
    \widehat w_{1/2}(x;\tau)
    \le
    2(t_{1/2}+a)\frac{h_\Gamma(x)}{J_\tau}.
\]
Letting $a\downarrow0$ and using
$J_\tau\sim L_\tau/\log L_\tau$ proves
\eqref{eq:noisy-halfheight-width}.
\end{proof}

\begin{remark}[Boundary asymptotics in terms of the noise level]
Choose $\tau=\tau(\epsilon_\Sigma)=C\epsilon_\Sigma$
with a fixed $C>1$, 
applying Proposition~\ref{prop:noisy-boundary-resolution} therefore
gives the half-height width, as the noise level $\epsilon_\Sigma\downarrow0$ and uniformly for $x\in\Gamma$,
\[
    \widehat w_{1/2}(x;\tau(\epsilon_\Sigma))
    \sim
    2t_{1/2}h_\Gamma(x)
    \frac{
        \log\log(\|A\|^2/\epsilon_\Sigma)
    }{
        \log(\|A\|^2/\epsilon_\Sigma)
    }.
\]
The fixed calibration factor $C$ does not affect these leading-order asymptotics.
\end{remark}

\begin{remark}[Stability under geometric approximation]
Let $\Gamma$ be a regular $C^1$ Jordan boundary and $\Gamma_n$ be analytic Jordan boundaries converging to $\Gamma$ in $C^1$-norm through regular parametrizations. Let $A_n$ and $A$ denote the
corresponding normalized radiation operators in
\eqref{eq:normalized-ideal-radiation}, and set
$\Sigma_n=A_{n}A_{n}^*$ and
$\Sigma=AA^*$. Since the associated integral kernels converge uniformly, $\varepsilon_n:=\|\Sigma_n-\Sigma\|\to0$.
For every fixed $\tau>0$, an operator-order comparison gives
\[
\sup_{z\in\mathbb R^2}
\left|
\frac{I_{Q,\tau}(z,\Sigma_n)}
     {I_{Q,\tau}(z,\Sigma)}-1
\right|
\le \frac{\varepsilon_n}{\tau}
\longrightarrow0.
\]
Thus, the regularized image of a $C^1$ boundary is the
uniform relative limit of the images generated
by its analytic approximations. In particular, the loss of boundary
analyticity alone causes no discontinuity in the regularized
image under such geometric approximation.
\end{remark}
This section has developed the \(Q\)-statistic and statistical counterparts of classical sampling methods. Using the correlation-free reference to isolate the resolution restrictions imposed by
radiation and regularization, we have established sharp
boundary-transition laws for disks
and simply connected analytic boundaries, and proved their stability under second-moment errors controlled by the regularization level. The next section examines how coherent mean, nonuniform variance,
and correlation within general boundary-source statistics modify image contrast and resolution.

	\section{Boundary-Source Statistics and Resolution}
	\label{sec:source-second-moment-imaging}

Section~\ref{sec:mathematical-statistical-formulation} established the
factorization $\Sigma=A_\Gamma\mathcal K_{s,\Gamma}A_\Gamma^*$.
Section~\ref{sec:q-distance-resolution} then showed that $Q_\tau$ is the
minimal regularized statistical cost induced by this factorization, and
derived a noise-dependent resolution limit for correlation-free induced
boundary sources. In an actual scattering experiment, however, the
induced boundary sources are generally nonuniform and correlated. A
radiation mode that is observable in the correlation-free model may then
contribute little to imaging if the corresponding source mode is only
weakly excited, and is therefore costly to resolve from finite, noisy
data. The purpose of this section is to explain how coherent mean power, fluctuation variance, and correlation modify the contrast and modal visibility of \(Q_\tau\) relative to the correlation-free model. To separate these
three effects, we work with general boundary source statistics and do
not require them to come from a particular scattering configuration.

	\subsection{Mean--covariance decomposition}
	\label{subsec:mean-variance-diversity}
	We separate the contributions of the coherent mean and centered fluctuations associated with  boundary sources to identify their distinct roles in \(Q\)-imaging. In the \(L^2(\Gamma)\) realization, let
	\[
	m=\mathbb E\varphi,\qquad
	C=\mathbb E[(\varphi-m)\otimes(\varphi-m)^*],
	\qquad
	\mathcal K_{s,\Gamma}=C+m\otimes m^* .
	\]
	With \(b=A_\Gamma m\), this decomposition propagates to the data second moment,
	\begin{equation}
		\Sigma=A_\Gamma C A_\Gamma^*+b\otimes b^* .
		\label{eq:mean-fluctuation-data-decomposition}
	\end{equation}
	Applying the source--signal identity of
	Proposition~\ref{thm:regularized-mimic-variational} to
	\(B(r,\alpha)=A_\Gamma C^{1/2}r+\alpha b\) gives
	\begin{equation}
		\langle f,(\Sigma+\tau I)^{-1}f\rangle_H
		=
		\min_{\substack{g\in\mathcal H_C\\ \alpha\in\mathbb C}}
		\left\{
		\|g\|_C^2+|\alpha|^2
		+\frac1\tau\|A_\Gamma g+\alpha b-f\|_H^2
		\right\},
		\label{eq:section4-general-variational}
	\end{equation}
	where $\mathcal H_C$ and the corresponding covariance-weighted norm are defined analogously to \eqref{eq:source-covariance-norm}. The scalar \(\alpha\) controls the coherent contribution along \(b\), whereas \(g\) ranges over the fluctuation source space. Consequently, the mean modifies the synthesis cost through a single coherent field, while the covariance determines the available fluctuation directions and their relative costs.

	\subsection{Effects of   boundary source mean and variance}
	\label{subsec:zero-correlation-mean-variance}
We now suppress spatial correlation in the centered fluctuations of the   boundary sources to isolate the effects of the coherent mean and local variance.  Let \(v\in L^\infty(\Gamma)\), \(v\ge0\), denote the fluctuation
variance density with respect to arclength, and define the
multiplication operator
	\((M_vg)(y)=v(y)g(y)\).  With \(b=A_\Gamma m\), set
	\begin{equation}
		\Sigma_v:=A_\Gamma M_vA_\Gamma^*,
		\quad
		\Sigma_{v,m}:=\Sigma_v+b\otimes b^*,\quad Q_{\tau,v,m}(z):=Q_\tau(z,\Sigma_{v,m}).
		\label{eq:zero-correlation-data-model}
	\end{equation}
    Here \(M_v\) describes correlation-free fluctuations with possibly
nonuniform variance, while \(b\otimes b^*\) retains the coherent
mean contribution.
Since \(A_\Gamma\) is Hilbert--Schmidt and \(M_v\) is bounded and
nonnegative, \(\Sigma_{v,m}\) is a positive trace-class operator.
For \(m=0\) and \(v\equiv v_0>0\), this model reduces to the normalized correlation-free reference of Section~\ref{subsec:correlation-free-resolution}.
	
	\begin{remark}[Short-correlation interpretation]
		\label{rem:short-correlation-justification}
		The multiplication operator \(M_v\) in
		\eqref{eq:zero-correlation-data-model} can be interpreted as a
		zero-correlation-length limit. Specifically, set
		\(C_{\ell,v}=M_{\sqrt v}e^{\ell^2\Delta_\Gamma} M_{\sqrt v}/Z_\ell\) where $\Delta_\Gamma$ is the nonpositive arclength Laplacian and $Z_\ell$ is the power normalization constant in \eqref{zl},
see Appendix~\ref{app:heat-kernel-model} for the diffusion-model
construction.  Then
		\[\bigl\|Z_\ell\Sigma_{\ell,v,0}+b\otimes b^*
			-\Sigma_{v,m}\bigr\|=\bigl\|Z_\ell A_\Gamma C_{\ell,v}A_\Gamma^*
			-A_\Gamma M_vA_\Gamma^*\bigr\|\longrightarrow0,
			\qquad \ell\downarrow0.\]
		Thus \eqref{eq:zero-correlation-data-model} is the far-field limit of
		short-correlated source covariances.
	\end{remark}
	
	The variational identity
\eqref{eq:section4-general-variational} specializes to
	\begin{equation}
		\label{eq:independent-variational}
		Q_{\tau,v,m}(z)
		=
		\min_{\substack{g\in\mathcal X_v\\ \alpha\in\mathbb C}}
		\left\{
		\int_{\{v>0\}}\frac{|g(y)|^2}{v(y)}\,ds(y)
		+|\alpha|^2
		+\frac1\tau\|A_\Gamma g+\alpha b-\phi_z\|_H^2
		\right\},
	\end{equation}
	where
	\[
	\mathcal X_v
	=
	\left\{
	g:\ g=0\ \text{a.e. on }\{v=0\},\
	\int_{\{v>0\}}\frac{|g|^2}{v}\,ds<\infty
	\right\}.
	\]
	Larger $v$ lowers the local fluctuation cost, whereas the mean
contributes through the single direction $b$.
We first quantify the effect of this coherent contribution
on the $Q$-statistic.
\begin{proposition}
    \label{thm:mean-rank-one-effect}
    Let $R=(\Sigma_v+\tau I)^{-1}$ and assume $b\ne0$.
    Write
    $\psi_z=R^{1/2}\phi_z=\psi_z^\parallel+\psi_z^\perp$
    for its orthogonal decomposition along
    $\operatorname{span}\{R^{1/2}b\}$ and its orthogonal complement.
    Then
    \begin{equation}
 Q_{\tau,v,m}(z)
    =
    \|\psi_z^\perp\|_H^2
    +
    \frac{\|\psi_z^\parallel\|_H^2}
         {1+\|R^{1/2}b\|_H^2}.
        \label{eq:mean-rank-one-factor}
    \end{equation}
    Since $Q_{\tau,v,0}(z)
    =
    \|\psi_z^\perp\|_H^2
    +
    \|\psi_z^\parallel\|_H^2$, the coherent mean reduces the cost along a single
    weighted direction, leaving the orthogonal component unchanged.
\end{proposition}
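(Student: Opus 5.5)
The plan is to treat this as a rank-one update and apply the Sherman--Morrison formula to $\Sigma_{v,m}+\tau I=(\Sigma_v+\tau I)+b\otimes b^*$. First, $\Sigma_v$ is bounded, self-adjoint and nonnegative. Hence $R=(\Sigma_v+\tau I)^{-1}$ is bounded, self-adjoint and positive with $R\le\tau^{-1}I$, and $R^{1/2}$ is well defined. Factor
\[
\Sigma_{v,m}+\tau I=R^{-1/2}\bigl(I+c\otimes c^*\bigr)R^{-1/2},\qquad c:=R^{1/2}b\neq0.
\]
This identity follows from $R^{-1/2}(c\otimes c^*)R^{-1/2}=(R^{-1/2}c)\otimes(R^{-1/2}c)^*=b\otimes b^*$, which uses the self-adjointness of $R^{-1/2}$. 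Inverting gives
\[
(\Sigma_{v,m}+\tau I)^{-1}=R^{1/2}(I+c\otimes c^*)^{-1}R^{1/2}.
\]
Therefore
\[
Q_{\tau,v,m}(z)=\langle\psi_z,(I+c\otimes c^*)^{-1}\psi_z\rangle_H,\qquad \psi_z=R^{1/2}\phi_z.
\]

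Second, I compute $(I+c\otimes c^*)^{-1}$ spectrally. The operator $I+c\otimes c^*$ acts as multiplication by $1+\|c\|_H^2$ on $\operatorname{span}\{c\}$. It acts as the identity on the orthogonal complement, since $(c\otimes c^*)g=\langle g,c\rangle c=0$ there. Its inverse is therefore $P^\perp+(1+\|c\|^2)^{-1}P^\parallel$, where $P^\parallel$ and $P^\perp$ are the orthogonal projections onto $\operatorname{span}\{c\}$ and its complement. Inserting the decomposition $\psi_z=\psi_z^\parallel+\psi_z^\perp$ and using orthogonality gives the formula \eqref{eq:mean-rank-one-factor}.

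Third, for the comparison with $m=0$, I note that $Q_{\tau,v,0}(z)=\langle\phi_z,R\phi_z\rangle_H=\|R^{1/2}\phi_z\|_H^2$. Pythagoras then gives $\|\psi_z^\perp\|^2+\|\psi_z^\parallel\|^2$. Since $\|c\|^2>0$, the factor $(1+\|c\|^2)^{-1}<1$. So the parallel component's cost is strictly reduced whenever $\psi_z^\parallel\ne0$, and the orthogonal component is untouched. This is the stated interpretation.

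The only point needing care is the factorization step, in particular that $R^{-1/2}$ may be unbounded-free. In fact $R^{-1/2}=(\Sigma_v+\tau I)^{1/2}$ is bounded with bounded inverse because $\tau>0$, so every manipulation takes place among bounded invertible operators and no domain issues arise. The invertibility of $I+c\otimes c^*$ follows from its explicit spectral form, since its eigenvalues are $1$ and $1+\|c\|^2>0$. Beyond this the argument is routine linear algebra.
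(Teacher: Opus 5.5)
Your proof is correct and takes essentially the same route as the paper. Both arguments invert the rank-one update $(\Sigma_v+\tau I)+b\otimes b^*$: the paper quotes Sherman--Morrison in the form $R-(Rb)\otimes(Rb)^*/(1+\|R^{1/2}b\|_H^2)$ and then splits $\psi_z$, whereas you derive the same inverse by factoring through $R^{-1/2}$ and inverting $I+c\otimes c^*$ spectrally as $P^\perp+(1+\|c\|_H^2)^{-1}P^\parallel$, which makes the final orthogonal-decomposition step immediate.
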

\begin{proof}
    By the Sherman--Morrison identity \cite{Hager1989}
    \[(\Sigma_v+b\otimes b^*+\tau I)^{-1}=R-\frac{(R b)\otimes (Rb)^*}
		{1+\|R^{1/2}b\|_H^2},\]
    and the orthogonal decomposition of $\psi_z$,
    \[
    \begin{aligned}
    Q_{\tau,v,m}(z)
    =
    \|\psi_z\|_H^2
    -
    \frac{|\langle R^{1/2}b,\psi_z\rangle_H|^2}
         {1+\|R^{1/2}b\|_H^2}
    =
    \|\psi_z^\perp\|_H^2
    +
    \frac{\|\psi_z^\parallel\|_H^2}
         {1+\|R^{1/2}b\|_H^2}.
    \end{aligned}
    \]
\end{proof}
    We next examine how the variance affects the $Q$-statistic,
with the mean and regularization parameter fixed.
	\begin{proposition}[Monotonicity with respect to variance]
    \label{prop:raw-variance-sensitivity}
    Let \(w\in L^\infty(\Gamma)\), \(w\ge0\), and set
    \(v_\varepsilon=v+\varepsilon w\) for \(\varepsilon\ge0\).
    At fixed \(m\) and \(\tau>0\), with
    \(R_{\tau,v,m}=(\Sigma_{v,m}+\tau I)^{-1}\),
    \begin{equation}
        \left.\frac{d}{d\varepsilon}
        Q_{\tau,v_\varepsilon,m}(z)\right|_{\varepsilon=0^+}
        =
        -\int_\Gamma w(y)
        |(A_\Gamma^*R_{\tau,v,m}\phi_z)(y)|^2\,ds(y)
        \le0.
        \label{eq:variance-derivative}
    \end{equation}
\end{proposition}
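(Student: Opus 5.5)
The plan is to differentiate the resolvent directly. The dependence on \(\varepsilon\) enters only through the fluctuation part. Indeed,
\[
\Sigma_{v_\varepsilon,m}=\Sigma_{v,m}+\varepsilon\,A_\Gamma M_wA_\Gamma^*,
\]
since \(M_{v+\varepsilon w}=M_v+\varepsilon M_w\) and the coherent term \(b\otimes b^*\) does not depend on \(\varepsilon\). The perturbation \(P:=A_\Gamma M_wA_\Gamma^*\) is bounded, self-adjoint and nonnegative, because \(w\in L^\infty(\Gamma)\) with \(w\ge0\) and \(A_\Gamma\) is bounded. Write \(T_\varepsilon=\Sigma_{v,m}+\tau I+\varepsilon P\). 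Then \(T_\varepsilon\ge\tau I\) for all \(\varepsilon\ge0\), so \(T_\varepsilon^{-1}\) exists with \(\|T_\varepsilon^{-1}\|\le\tau^{-1}\) uniformly in \(\varepsilon\).

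The first step is the second resolvent identity,
\[
T_\varepsilon^{-1}-T_0^{-1}=-\varepsilon\,T_\varepsilon^{-1}PT_0^{-1}.
\]
Together with the uniform bound, it gives \(\|T_\varepsilon^{-1}-T_0^{-1}\|\le\varepsilon\|P\|/\tau^2\to0\). Substituting this back into the identity yields
\[
\varepsilon^{-1}\bigl(T_\varepsilon^{-1}-T_0^{-1}\bigr)\to -R_{\tau,v,m}PR_{\tau,v,m}
\]
in operator norm as \(\varepsilon\downarrow0\). Pairing with the fixed vector \(\phi_z\) then gives the one-sided derivative
\[
\frac{d}{d\varepsilon}Q_{\tau,v_\varepsilon,m}(z)\Big|_{\varepsilon=0^+}
=-\langle \phi_z,R_{\tau,v,m}A_\Gamma M_wA_\Gamma^*R_{\tau,v,m}\phi_z\rangle_H .
\]

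The second step rewrites this quadratic form. Since \(R_{\tau,v,m}\) is self-adjoint, set \(\eta:=A_\Gamma^*R_{\tau,v,m}\phi_z\in L^2(\Gamma)\). The quadratic form equals \(\langle \eta,M_w\eta\rangle_{L^2(\Gamma)}=\int_\Gamma w|\eta|^2\,ds\). This is the stated formula \eqref{eq:variance-derivative}, and the sign \(\le0\) is immediate from \(w\ge0\).

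No step is a genuine obstacle. The only point needing care is the justification of the limit at \(\varepsilon=0^+\): we must ensure that \(v_\varepsilon\) stays admissible, which holds since \(v_\varepsilon\ge0\) and \(v_\varepsilon\in L^\infty\), and that the difference quotient converges, which follows from the uniform resolvent bound \(\tau^{-1}\) independent of \(\varepsilon\ge0\). We may also note an alternative via the variational formula \eqref{eq:independent-variational} and an envelope-theorem argument. The resolvent route is cleaner, though, because it avoids the \(1/v\) weight on \(\{v=0\}\).
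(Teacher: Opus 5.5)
Your proposal is correct and follows the paper's own argument: differentiate the resolvent to get $-R_{\tau,v,m}A_\Gamma M_wA_\Gamma^*R_{\tau,v,m}$, pair with $\phi_z$, and rewrite the quadratic form as $\int_\Gamma w\,|A_\Gamma^*R_{\tau,v,m}\phi_z|^2\,ds$. Your use of the second resolvent identity together with the uniform bound $\|T_\varepsilon^{-1}\|\le\tau^{-1}$ simply makes rigorous the one-sided derivative step that the paper states in one line.
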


\begin{proof}
    Since \(\partial_\varepsilon\Sigma_{v_\varepsilon,m}
    =A_\Gamma M_wA_\Gamma^*\), the inverse derivative at zero is
    \(-R_{\tau,v,m}A_\Gamma M_wA_\Gamma^*R_{\tau,v,m}\).
    Pairing with \(\phi_z\) proves \eqref{eq:variance-derivative}.
\end{proof}
    \begin{remark}
    At a fixed absolute second-moment noise level, increasing the variance
raises the effective data SNR. Specifically, for $m=0$, uniform variance scaling gives
    \[
    Q_{\tau,\rho v,0}(z)
    =\rho^{-1}Q_{\tau/\rho,v,0}(z),
    \qquad \rho>0.
    \]
    With $\tau$ fixed at the corresponding regularization level, the identity expresses this gain as a reduction
of the effective regularization level from $\tau$ to $\tau/\rho$,
which increases the regularization weights of the data modes, linking source power to the resolution analysis of
    Section~\ref{subsec:correlation-free-resolution}.
\end{remark}
        
	On the other hand,  a spatially nonuniform change of \(v\) reweights different boundary regions and may alter the normalized image profile. The following estimate gives a local geometric interpretation. 
    \begin{figure}[htbp]
    \centering
    \includegraphics[width=0.4\linewidth]{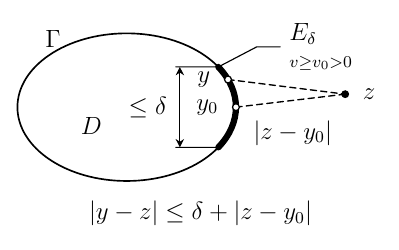}
    \caption{Geometry of the local-variance estimate.}
    \label{fig:local-variance-schematic}
\end{figure}

Suppose \(v\ge v_0>0\) on a boundary segment \(E_\delta\) containing
\(y_0\), with \(|E_\delta|>0\) and
\(\operatorname{diam}(E_\delta)\le\delta\).
In \eqref{eq:independent-variational}, choose \(\alpha=0\) and
\(g=\mathbf1_{E_\delta}/(\gamma_k|E_\delta|)\).
Using \(\|\phi_y-\phi_z\|_H\le\sqrt{2\pi}\,k|y-z|\) gives
\begin{equation}
    Q_{\tau,v,m}(z)
    \le
    \frac{1}{|\gamma_k|^2v_0|E_\delta|}
    +\frac{2\pi k^2(\delta+|z-y_0|)^2}{\tau}.
    \label{eq:local-high-variance-bound}
\end{equation}
A strong independent source segment can therefore reproduce nearby test
fields at low cost, providing a local mechanism for increased brightness.

	\subsection{  Boundary-source correlation and modal diversity}
	\label{sec:abstract-global-correlation}
	
In this subsection, we measure the modal diversity of the   boundary sources by regularized effective
dimensions and relate it to the spatial information carried by the
$Q$-statistic. We want to separate two effects: how the fluctuation
power is spread over the boundary source modes, and how large that power is in
total. To do this, we take mean-zero sources, $m=0$, and compare
nonnegative trace-class source covariances $C$ with the same total power
\[
  P:=\operatorname{tr}C=\mathbb E\|\varphi\|_{L^2(\Gamma)}^2 .
\]
Changing $C$ at fixed $P$ then only redistributes the spectrum of $C$.
For each such $C$, the data covariance and the corresponding statistic
are
\begin{equation}
  \Sigma_C:=A_\Gamma C A_\Gamma^*,
  \qquad
  Q_{\tau,C}(z):=Q_\tau(z,\Sigma_C).
  \label{eq:Sigma-C}
\end{equation}
	
\subsubsection{Fixed-power source modal diversity and effective ranks}
	
\begin{definition}[Modal diversity order]
  \label{def:modal-diversity-order}
  Let $C_1,C_2$ be nonzero nonnegative trace-class operators on
  $L^2(\Gamma)$ with $\operatorname{tr}C_1=\operatorname{tr}C_2$.
  For $i=1,2$, let $\kappa^{(i)}_1\ge\kappa^{(i)}_2\ge\cdots\ge0$ be the
  eigenvalues of $C_i$, repeated according to multiplicity and extended
  by zeros if $C_i$ has finite rank. We say that $C_1$ has
  \emph{at least as much modal diversity} as $C_2$, and write
  $C_1\succeq_{\rm div}C_2$, if
  \begin{equation}
    \sum_{j=1}^{M}\kappa_j^{(1)}
    \;\le\;
    \sum_{j=1}^{M}\kappa_j^{(2)}
    \qquad\text{for every } M\ge1 .
    \label{eq:fixed-power-majorization}
  \end{equation}
\end{definition}
	
Condition~\eqref{eq:fixed-power-majorization} says that the spectrum of
$C_2$ majorizes that of $C_1$. For every $M$, the $M$ leading modes of
$C_2$ carry at least as much power as those of $C_1$. Because the total
power is the same, $C_2$ has correspondingly less power in its trailing
modes, so its fluctuations are spread over fewer effective modes. To
measure this loss of modal diversity, we introduce regularized effective
ranks for both the source covariance and the data covariance.
	
	\begin{definition}[Regularized effective ranks]
    \label{def:global-effective-dimensions}
    For source and data regularization levels \(\beta,\tau>0\), define
    \begin{equation}
        \begin{aligned}
        \mathcal R_{\rm src}(C,\beta)
        &:=\operatorname{tr}[C(C+\beta I)^{-1}]
        =\sum_j\frac{\kappa_j}{\kappa_j+\beta},\\
        \mathcal R_{\rm far}(\Sigma_C,\tau)
        &:=\operatorname{tr}[\Sigma_C(\Sigma_C+\tau I)^{-1}]
        =\sum_j\frac{\lambda_j}{\lambda_j+\tau},
        \end{aligned}
        \label{eq:regularized-effective-dimensions}
    \end{equation}
    where \(\kappa_j\) and \(\lambda_j\) are the eigenvalues of
    \(C\) and \(\Sigma_C\).
\end{definition}
	Each weight is close to one for eigenvalues well above the
regularization level and small for those well below it,
giving a soft count of active modes.
The following proposition relates this count to source-side modal diversity order. 
	
	\begin{proposition}
		\label{thm:correlation-effective-dimension}
		 If \(C_2\succeq_{\rm div}C_1\), then, for every \(\beta>0\),
		\begin{equation}
			\mathcal R_{\rm src}(C_2,\beta)
			\ge
			\mathcal R_{\rm src}(C_1,\beta).
			\label{eq:source-correlation-effective-dimension-order}
		\end{equation}
		Moreover, for every $P>0$ and $\beta>0$, 
		\begin{equation}
			\sup_{{C\ge0, \operatorname{tr}C=P}}
			\mathcal R_{\rm src}(C,\beta)
			=
			\frac{P}{\beta}.
			\label{eq:correlation-free-effective-dimension-supremum}
		\end{equation}    
	\end{proposition}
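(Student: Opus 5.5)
Both claims reduce to statements about the scalar function $f_\beta(\kappa)=\kappa/(\kappa+\beta)$ on $[0,\infty)$. It is increasing and concave, with $f_\beta(0)=0$ and $f_\beta(\kappa)\le\kappa/\beta$. Note that $C_2\succeq_{\rm div}C_1$ means the spectrum of $C_1$ majorizes that of $C_2$: the partial sums satisfy $\sum_{j\le M}\kappa^{(2)}_j\le\sum_{j\le M}\kappa^{(1)}_j$, and the totals are equal.

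For \eqref{eq:source-correlation-effective-dimension-order}, I will use the Karamata / Hardy--Littlewood--P\'olya inequality for concave functions, extended to nonincreasing summable sequences. The plan is Abel summation, which avoids quoting an infinite-dimensional majorization theorem.
\begin{itemize}
\item Write $S^{(i)}_M=\sum_{j\le M}\kappa^{(i)}_j$ and form the slopes
\[
c_j=\frac{f_\beta(\kappa^{(1)}_j)-f_\beta(\kappa^{(2)}_j)}{\kappa^{(1)}_j-\kappa^{(2)}_j},
\]
with $c_j=f_\beta'(\kappa^{(1)}_j)$ when the two eigenvalues coincide.
\item Both sequences are nonincreasing in $j$ and $f_\beta$ is concave, so the chord slopes $c_j$ are nondecreasing in $j$. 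They are also nonnegative and bounded by $1/\beta$.
\item Summation by parts on a partial sum gives
\[
\sum_{j\le M}\bigl(f_\beta(\kappa^{(2)}_j)-f_\beta(\kappa^{(1)}_j)\bigr)
=\sum_{j\le M}c_j\bigl(\kappa^{(2)}_j-\kappa^{(1)}_j\bigr)
=c_M\bigl(S^{(2)}_M-S^{(1)}_M\bigr)+\sum_{j<M}(c_j-c_{j+1})\bigl(S^{(2)}_j-S^{(1)}_j\bigr).
\]
\item Each term $(c_j-c_{j+1})(S^{(2)}_j-S^{(1)}_j)$ is a product of two nonpositive factors, hence nonnegative.
\item The boundary term $c_M(S^{(2)}_M-S^{(1)}_M)$ tends to $0$ as $M\to\infty$, because $c_M$ is bounded and the equal traces force $S^{(i)}_M\to P$.
\item Passing to the limit, the series converge since $f_\beta(\kappa)\le\kappa/\beta$ and the operators are trace class. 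This gives $\mathcal R_{\rm src}(C_2,\beta)-\mathcal R_{\rm src}(C_1,\beta)\ge0$.
\end{itemize}

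The main obstacle is the limit step. Monotonicity of the chord slopes must be checked carefully, including the degenerate slopes where eigenvalues coincide. The tail behaviour must also be handled so that the infinite sums, including trailing zeros for finite rank, are legitimate. Both follow from the uniform bound $0\le c_j\le 1/\beta$ together with trace-class convergence.

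For \eqref{eq:correlation-free-effective-dimension-supremum}, the upper bound is immediate:
\[
\mathcal R_{\rm src}(C,\beta)=\sum_j\frac{\kappa_j}{\kappa_j+\beta}<\sum_j\frac{\kappa_j}{\beta}=\frac{P}{\beta}.
\]
For the lower bound, take an orthonormal family $\{e_j\}_{j=1}^n$ in the infinite-dimensional space $L^2(\Gamma)$ and set
\[
C_n=\frac{P}{n}\sum_{j=1}^n e_j\otimes e_j^*.
\]
Then $\operatorname{tr}C_n=P$ and
\[
\mathcal R_{\rm src}(C_n,\beta)=\frac{nP/n}{P/n+\beta}=\frac{P}{P/n+\beta}\longrightarrow\frac{P}{\beta}\qquad(n\to\infty).
\]
So the supremum equals $P/\beta$ but is not attained by any trace-class $C$. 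It is approached by increasingly flat spectra, that is, by the correlation-free limit. This is consistent with the first part, since flatter spectra are higher in the $\succeq_{\rm div}$ order.
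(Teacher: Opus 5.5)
Your proposal is correct and follows the paper's proof. The first claim is Karamata's inequality for the increasing concave $f_\beta(x)=x/(x+\beta)$ with $f_\beta(0)=0$. The second is the strict bound $\mathcal R_{\rm src}(C,\beta)<P/\beta$ together with the equal-power family $(P/n)P_n$. The only difference is that you prove the infinite-sequence Karamata step yourself by Abel summation, where the paper cites its trace-class form. Your argument uses chord slopes that are nondecreasing and bounded by $1/\beta$, and the equal traces kill the boundary term, so the proof is self-contained.
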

	
	\begin{proof}
		For \(\beta>0\), the function
		\[
		f_\beta(x)=\frac{x}{x+\beta},
		\qquad x\ge0,
		\]
		is increasing, concave, and satisfies \(f_\beta(0)=0\).
		The trace-class form of Karamata's inequality \cite{MarshallOlkinArnold2011} applied to
		\eqref{eq:fixed-power-majorization} therefore gives
		\(
		\sum_j f_\beta(\kappa_j^{(2)})
		\ge
		\sum_j f_\beta(\kappa_j^{(1)}).
		\)
		
		On the other hand, if \(\{\kappa_j\}_{j\ge1}\) are the eigenvalues of \(C\), then
		\[
		\mathcal R_{\rm src}(C,\beta)
		=
		\sum_{j\ge1}\frac{\kappa_j}{\kappa_j+\beta}
		<
		\frac{1}{\beta}\sum_{j\ge1}\kappa_j
		=
		\frac{P}{\beta}.
		\]
		For \(C_d=(P/d)P_d\) where \(P_d\) is any rank-\(d\)
		orthogonal projection, its \(d\) nonzero eigenvalues are all \(P/d\), therefore
		\(
		\mathcal R_{\rm src}(C_d,\beta)
		=
		d\,\frac{P/d}{P/d+\beta}
		=
		\frac{P}{\beta+P/d}
		\longrightarrow
		\frac{P}{\beta}, d\rightarrow \infty.
		\)
	\end{proof}
    \begin{remark}[Recovery of the correlation-free reference]
\label{rem:fixed-power-white-reference}
Let $V_1\subset V_2\subset\cdots\subset L^2(\Gamma)$ be nested spaces
  for the   boundary sources, with $\dim V_n=n$, orthogonal
  projectors $P_n$, and union dense in $L^2(\Gamma)$. Among all
  nonnegative covariances supported on $V_n$ with trace $P$, the
  equal-power covariance
  \[
    C_n:=\frac{P}{n}\,P_n
  \]
  maximizes $\mathcal R_{\rm src}$. At fixed power,
  the data covariance $\Sigma_{C_n}=(P/n)\,A_\Gamma P_nA_\Gamma^*$ tends
  to zero as $n\to\infty$, so we rescale by $n/P$. Since $P_n\to I$
and $A_\Gamma$ is compact,
  \[
    \frac{n}{P}\,\Sigma_{C_n}
    =A_\Gamma P_nA_\Gamma^*
    \longrightarrow A_\Gamma A_\Gamma^*
    \qquad\text{in operator norm as } n\to\infty .
  \]
  Thus, after power rescaling, the most diverse fixed-power sources
  recover the correlation-free reference of
  Section~\ref{subsec:correlation-free-resolution}.
\end{remark}
		We note that the quotient
		\( |\Gamma|/\mathcal R_{\rm src}(C,\beta) \) can be interpreted as the average boundary length per effective mode remaining active above the floor \(\beta\), which carries information on the correlation length of the boundary sources. 
		The fixed-power diffusion model in
Appendix~\ref{app:heat-kernel-model}
provides a concrete realization of this ordering: increasing the spatial correlation length
concentrates the covariance spectrum and reduces the
source-side effective rank.

    \subsubsection{Radiation and spatial contrast}
	Proposition~\ref{thm:correlation-effective-dimension} concerns
source-side modal diversity. The far-field covariance depends
on both the source spectrum and the alignment of its eigenvectors
with \(A_\Gamma\). Nevertheless, the source spectrum and the radiation singular
values jointly constrain the far-field spectrum, yielding the
following effective-rank bound.

\begin{proposition}[Source-to-far-field spectral bounds]
\label{prop:source-to-far-effective-dimension}
Let \(C\) be a nonnegative trace-class operator and
\(A_\Gamma\) the radiation operator. Denote by \(\kappa_j\)
and \(\lambda_j\) the eigenvalues of \(C\) and
\(\Sigma_C=A_\Gamma C A_\Gamma^*\), respectively, and by
\(s_j\) the singular values of \(A_\Gamma\), all in
nonincreasing order. Then
\begin{equation}
    \lambda_j
    \le
    \min\{s_1^2\kappa_j,\;s_j^2\kappa_1\},
    \qquad j\ge1.
    \label{eq:spectral-bottleneck-upper}
\end{equation}
Consequently, for every \(\tau>0\),
\begin{equation}
    \mathcal R_{\rm far}(\Sigma_C,\tau)
    \le
    \mathcal R_{\rm src}
    \left(C,\frac{\tau}{\|A_\Gamma\|^2}\right).
    \label{eq:source-to-far-effective-dimension-bound}
\end{equation}
\end{proposition}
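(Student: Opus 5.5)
The spectral bound \eqref{eq:spectral-bottleneck-upper} will come from the Courant--Fischer min--max characterization. Write $\Sigma_C=BB^*$ with $B=A_\Gamma C^{1/2}$. The nonzero eigenvalues of $BB^*$ coincide, with multiplicity, with those of $B^*B=C^{1/2}A_\Gamma^*A_\Gamma C^{1/2}$. Hence $\lambda_j=s_j(B)^2$, where $s_j(\cdot)$ denotes the $j$-th singular value.

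The two inequalities are then the standard singular-value product bounds $s_j(XY)\le\|X\|\,s_j(Y)$ and $s_j(XY)\le s_j(X)\,\|Y\|$.
\begin{itemize}
\item Taking $X=A_\Gamma$ and $Y=C^{1/2}$ gives $s_j(B)\le s_1\,\kappa_j^{1/2}$, since the singular values of $C^{1/2}$ are $\kappa_j^{1/2}$.
\item Taking $X=A_\Gamma$ and $Y=C^{1/2}$ the other way gives $s_j(B)\le s_j\,\kappa_1^{1/2}$.
\end{itemize}
Squaring both yields \eqref{eq:spectral-bottleneck-upper}.

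If a self-contained justification is wanted, both bounds follow from min--max. For any subspace $V$ of codimension $j-1$ one has $\|XYw\|\le\|X\|\,\|Yw\|$, which gives the first. For the second, take $\max_{w\perp W}\|XYw\|$ with $W=Y^*(\ker\text{-complement})$, i.e.\ restrict to $w$ with $Yw\perp$ the top $j-1$ right singular vectors of $X$. Alternatively, use $s_j(XY)=s_j(Y^*X^*)$ and apply the first bound to the adjoint. Compactness is harmless here because $A_\Gamma$ is Hilbert--Schmidt and $C$ is trace class, so all spectra are discrete with orderings as stated.

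For the effective-rank bound \eqref{eq:source-to-far-effective-dimension-bound}, use only the first half, $\lambda_j\le s_1^2\kappa_j=\|A_\Gamma\|^2\kappa_j$. The map $x\mapsto x/(x+\tau)$ is increasing on $[0,\infty)$, so termwise
\[
\frac{\lambda_j}{\lambda_j+\tau}\le\frac{\|A_\Gamma\|^2\kappa_j}{\|A_\Gamma\|^2\kappa_j+\tau}=\frac{\kappa_j}{\kappa_j+\tau/\|A_\Gamma\|^2}.
\]
Summing over $j$, with both sequences indexed in nonincreasing order and padded by zeros, gives the claim. Both series converge because $\Sigma_C$ and $C$ are trace class.

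The only step needing care is the eigenvalue identification together with the correct indexing. Eigenvalues of $BB^*$ and $B^*B$ agree away from zero. Zeros contribute nothing to either sum, so padding by zeros is consistent. I expect no genuine obstacle beyond stating the singular-value inequality cleanly via min--max.
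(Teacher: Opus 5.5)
Your proof is correct and follows the paper's argument essentially verbatim: the paper also sets $B=A_\Gamma C^{1/2}$, identifies $\lambda_j=s_j(B)^2$, invokes the standard bounds $s_j(XY)\le\min\{\|X\|\,s_j(Y),\,s_j(X)\,\|Y\|\}$, and then sums the termwise inequality given by monotonicity of $x\mapsto x/(x+\tau)$, using only $\lambda_j\le s_1^2\kappa_j$. Your min--max justification of the singular-value inequalities is an optional extra that the paper replaces by a citation; the adjoint identity $s_j(XY)=s_j(Y^*X^*)$ is its cleanest form.
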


\begin{proof}
Set \(B=A_\Gamma C^{1/2}\). Since \(\Sigma_C=BB^*\),
its eigenvalues satisfy \(\lambda_j=s_j(B)^2\).
The standard singular-value inequalities \cite{Simon2005} give
\[
    s_j(B)
    \le
    \min\!\left\{
        \|A_\Gamma\|\,s_j(C^{1/2}),
        \;s_j(A_\Gamma)\,\|C^{1/2}\|
    \right\}
    =
    \min\{s_1\sqrt{\kappa_j},\;s_j\sqrt{\kappa_1}\}.
\]
Squaring yields the spectral bounds.
Since \(x\mapsto x/(x+\tau)\) is increasing,
\[
\begin{aligned}
    \mathcal R_{\rm far}(\Sigma_C,\tau)
    =\sum_j\frac{\lambda_j}{\lambda_j+\tau}\le
    \sum_j\frac{s_1^2\kappa_j}{s_1^2\kappa_j+\tau}
    =
    \mathcal R_{\rm src}
    \left(C,\frac{\tau}{\|A_\Gamma\|^2}\right).
\end{aligned}
\]
\end{proof}
	
Proposition~\ref{prop:source-to-far-effective-dimension} shows that two
factors limit which far-field modes are available: the statistics of the   boundary sources
and the radiation operator. The far-field effective rank at floor $\tau$
is at most the source effective rank at the matched floor
$\beta=\tau/\|A_\Gamma\|^2$. Radiation can lower it further. $A_\Gamma$
strongly attenuates some source directions, so their far-field
contributions fall below the floor $\tau$ even when they are well
excited.
    
    To relate these spectral constraints to spatial contrast of \(Q\), let \((\lambda_j,u_j)\) denote the positive
eigenpairs of \(\Sigma_C\). Rewriting the spectral representation by 	the resolvent identity
		\(
		(\Sigma+\tau I)^{-1}
		=
		\frac1\tau I
		-
		\frac1\tau
		\Sigma(\Sigma+\tau I)^{-1}
		\)
gives
\begin{equation}
    Q_{\tau,C}(z)
    =
    \frac{2\pi}{\tau}
    -
    \frac1\tau
    \sum_{\lambda_j>0}
    \frac{\lambda_j}{\lambda_j+\tau}
    |\langle\phi_z,u_j\rangle_H|^2.
    \label{eq:Q-baseline-modal-correction}
\end{equation}

The first term is independent of \(z\).
The spectral weights in the second term are precisely the summands
defined in \(\mathcal R_{\rm far}(\Sigma_C,\tau)\). A mode with
	\(\lambda_{j}\gg\tau\) can lower \(Q\) substantially and differently at
	different sampling points while the modes with \(\lambda_{j}\ll\tau\) can
	barely move \(Q\) away from the common baseline.

Consequently, when increasing correlation concentrates the covariance spectrum
	so that more data eigenvalues fall below \(\tau\), fewer terms remain capable
	of distinguishing sampling points.  Nearby tests whose projections differ
	mainly in the suppressed modes then yield similar values of the \(Q\)-statistic, and the
	spatial variation of both \(Q_\tau\) and \(I_\tau=1/Q_\tau\) may be weakened. 

	\subsubsection{Local correlation and spatial contrast}
\label{sec:abstract-local-correlation}

To examine local correlation effects, let \(E\subset\Gamma\)
be a boundary patch of positive arclength and \(R_E\) the
restriction to \(L^2(E)\). Set
\[
C_E=R_ECR_E^*,\qquad
A_E=A_\Gamma R_E^*,\qquad
\Sigma_E=A_EC_EA_E^*,
\]
and define
\begin{equation}
    Q_{\tau,E}(z):=Q_\tau(z,\Sigma_E).
    \label{eq:local-correlation-Q}
\end{equation}
This statistic isolates   boundary sources on \(E\), omitting sources
on its complement and cross-correlations between the two
regions. For sampling points close to a regular boundary
segment, \(E\) may be chosen around the nearest boundary
point. In a locally dominated regime, where the omitted
contributions have little effect on the statistic at these
points, \(Q_{\tau,E}\) provides a local approximation to
\(Q_{\tau,C}\). 

The modal representation~\eqref{eq:Q-baseline-modal-correction}
applies with \(\Sigma_C\) replaced by \(\Sigma_E\).
For example, spatially uniform, perfectly correlated
fluctuations on \(E\) are modeled by
\[
C_E=\gamma_E\psi_E\otimes\psi_E^*,
\qquad
\psi_E=|E|^{-1/2}\mathbf1_E,
\qquad \gamma_E>0.
\]
The pointwise variance is \(\gamma_E/|E|>0\) while
\(C_E\) has rank one. With \(a_E=A_E\psi_E\), the
rank-one identity gives
\begin{equation}
    Q_{\tau,E}(z)
    =
    \frac{2\pi}{\tau}
    -
    \frac{\gamma_E|\langle\phi_z,a_E\rangle_H|^2}
    {\tau\bigl(\tau+\gamma_E\|a_E\|_H^2\bigr)}.
    \label{eq:rank-one-local-Q}
\end{equation}
Thus when the patch dominates the response near \(E\),
distinguishing nearby sampling points relies on a single
far-field mode $a_E$, which can lead to weaker local imaging contrast, depending only on \(|\langle\phi_z,a_E\rangle_H|^2\).

	\section{Scattering-induced source statistics and imaging}
	\label{sec:physical-statistics-consequences}

    Section~\ref{sec:source-second-moment-imaging} examined how
boundary-source statistics affect imaging. In physical scattering,
these statistics are induced by the illumination and the obstacle
through the second-moment factorization $\Sigma=A_\Gamma S_\Gamma^{-1}\mathcal{K}_{i,\Gamma}
		S_\Gamma^{-*}A_\Gamma^*$ derived in \eqref{eq:second-moment-factorization}, where \(\mathcal K_{i,\Gamma}\) describes the incident-trace
second moment, \(S_\Gamma^{-1}\) encodes the effect of multiple scattering and represents the boundary response
that generates the induced source, and \(A_\Gamma\) maps this
source to the measured far field. The induced boundary-source statistics therefore
reflect both excitation and scatterer interaction, while their
contribution to the data is further filtered by radiation.

Since the induced statistics depend nonlocally on geometry and frequency, we focus on qualitative mechanisms, deriving simple
incident-side or asymptotic predictions where possible.
These mechanisms concern several representative scattering configurations, including limited-aperture illumination, scatterers with geometric
features, closely spaced obstacles, and point-source excitation, which are illustrated in
Figure~\ref{fig:scattering-configurations}, and their effects
on source statistics and imaging are examined numerically in
Section~\ref{subsec:part3-physical-statistics} and Section~\ref{subsec:general-incidence-Q} for the point-source excitation.
	\begin{figure}[htbp]
    \centering
    \includegraphics[width=0.7\linewidth]
    {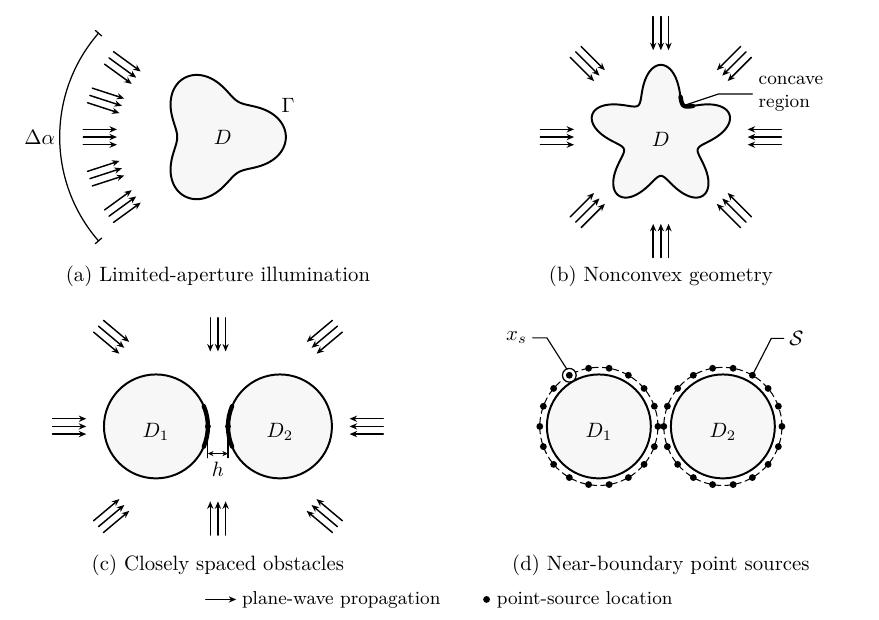}
    \caption{
        Representative configurations for scattering-induced
        source statistics.
    }
    \label{fig:scattering-configurations}
\end{figure}
	\subsection{Limited-aperture illumination}

Limited-aperture illumination restricts angular diversity without
reducing the incident mean-square amplitude, since
\(K_{\mu,\Gamma}(y,y)=1\) under the normalization of \(\mu\).
Its effects can be examined through incident spatial coherence and
induced source power.

Let \(y_0\in\Gamma\) be a smooth boundary point with outward unit
normal \(\nu_0\), and let \(y_\delta\) denote the boundary point at
signed arclength distance \(\delta\) from \(y_0\). For a uniform
angular window of width \(\Delta\alpha\ll1\) centered at \(d_0\),
linearization in arclength and incident angle gives an approximation to the incident second moment and hence the local coherence scale,
\begin{equation}
\left|K_{\mu,\Gamma}(y_0,y_\delta)\right|
\approx
\left|
\operatorname{sinc}\!\left(
\frac{k\delta\Delta\alpha}{2}\,d_0\cdot\nu_0
\right)
\right|,\qquad
\ell_{\rm inc}(y_0)
\asymp
\frac{1}{k\Delta\alpha\,|d_0\cdot\nu_0|}.
\label{eq:incident-aperture-correlation-scale}
\end{equation}
Thus a narrower aperture produces longer incident coherence.
As the angular window shrinks to \(d_0\),
\(
K_{\mu,\Gamma}(y,y')
\longrightarrow
e^{ikd_0\cdot y}e^{-ikd_0\cdot y'}.
\)
The limiting incident second moment has rank one, yielding source-side and data-side second moments of rank at most one by linearity.

A complementary high-frequency approximation describes the spatial
distribution of source power. For a smooth convex obstacle, the physical-optics approximation gives
\cite{ChandlerWildeGrahamLangdonSpence2012}
\[
\varphi_d(y)
\approx
2ik(-d\cdot\nu(y))_+e^{ikd\cdot y},
\qquad
a_+:=\max\{a,0\}.
\]
Consequently,
\begin{equation}
V_\varphi(y):=\mathbb E_\mu|\varphi_d(y)|^2
\approx
4k^2\int_{\mathbb S^1}
(-d\cdot\nu(y))_+^2\mu(d)\,d\sigma(d).
\label{eq:section5-physical-optics-variance}
\end{equation}
 The physical-optics approximation predicts weak source excitation
on the shadowed side, making this part of the boundary potentially
poorly visible in the image.

Together, these incident-side and high-frequency estimates provide a qualitative explanation of
limited-aperture imaging through incident correlation and source-power
distribution. Boundary regions facing the aperture center combine
strong excitation with a short incident correlation length, favoring
clearer imaging. Along the illuminated boundary, more oblique incidence
weakens excitation and increases the predicted local correlation length,
suggesting progressively weaker imaging contrast.
	
	\subsection{Nonconvex geometry}
	\label{subsec:geometry-concavity-rank}
	At fixed illumination, geometry also enters the induced second moment through
	\(S_\Gamma^{-1}\). Because this
	map is nonlocal, a boundary patch is driven by both the incident field and
	the scattered field returned from the rest of the boundary. Interference and multiple
	scattering may therefore change its local variance and correlation. Concave regions are natural locations
	for such effects. Accordingly,
	Section~\ref{sec:numerical-experiments} examines the boundary source statistics around these recessed regions, where
relatively low source variance and strong local correlation are
observed.

    \subsection{Closely spaced obstacles}
\label{subsec:close-obstacles-gap-blurring}

For closely spaced obstacles, each component is driven by the incident
field together with the field scattered by the other component.
This mutual interaction can modify both the strength and correlation
of the induced sources on the facing arcs. Full-aperture illumination
therefore does not ensure uniform excitation of the boundaries across the gap.

In the configurations considered in Section~\ref{sec:numerical-experiments}, relatively low source
variance and strong cross-correlation are observed on parts of the
facing arcs, where the reconstructed boundaries are less clearly
resolved. The former indicates weak fluctuation power, while the
latter indicates closely linked fluctuations across the gap.
Both features may contribute to reduced contrast between the
facing boundaries.

For the same geometry and far-field observation setup, near-boundary
point-source illumination and prescribed correlation-free sources
improve the visibility of the facing arcs
(Figure~\ref{fig:point-source-improvement}).
These comparisons suggest that the observed gap blurring depends
not only on separation, but also on how the facing boundaries
are excited.
	
	\subsection{Near-boundary point-source illumination}
	\label{subsec:point-source-resolution-enhancement}
	Let point-source locations $x_s$ be distributed according to a probability
	measure $\pi_{\rm s}$ on
	\(\mathcal S\subset\mathbb R^2\setminus\overline D\), yielding the incident-trace second-moment kernel
\begin{equation}
		K_{\pi_{\rm s},\Gamma}^{\rm pt}(y,y')
		=
		\int_{\mathcal S}
		\Phi_k(y,x_s)\,
		\overline{\Phi_k(y',x_s)}
		\,d\pi_{\rm s}(x_s).
		\label{eq:point-source-physical-sandwich}
	\end{equation}
	A nearby point source produces strong spatial variation in the
incident trace near the closest boundary region. Sampling different
source locations changes these local excitation patterns and can
strengthen boundary variations that are weakly excited by plane waves.
This modifies the source statistics while leaving the radiation
operator and far-field observation geometry unchanged.
Figure~\ref{fig:point-source-improvement} illustrates improved
visibility of the facing arcs as the source circles approach
the boundaries.

\section{Numerical Experiments}
\label{sec:numerical-experiments}

We first prescribe boundary-source statistics to isolate the mechanisms of
Section~\ref{sec:source-second-moment-imaging}, and then examine the statistics
generated by physical scattering. Throughout this section, the wavelength is
\(\lambda=0.5\), the far field is observed in \(100\) uniformly spaced directions,
and the perturbed far-field second-moment
matrix \(\widehat\Sigma\) is Hermitian positive semidefinite and satisfies
\(\|\widehat\Sigma-\Sigma\|_2/\|\Sigma\|_2=10^{-2}\), with \(\tau=\|\widehat\Sigma-\Sigma\|_2\).
All images display
\(I_{Q,\tau}=1/Q_\tau\), normalized by their individual maxima. 

\subsection{Prescribed boundary-source statistics}
\label{subsec:part2-artificial-statistics}

For each fixed geometry, we prescribe the boundary-source
statistics directly, without solving a scattering equation.
We adopt a Gaussian kernel-convolution construction
\cite{PaciorekSchervish2006}, adapted to boundary arclength
coordinates. In quadrature-normalized source coordinates,
let \(m_h\) denote the mean, \(v_j\ge0\) the variance, and
\(L_j=\lambda l_j>0\) the correlation-length parameter at node
\(s_j\). When all \(L_j\) coincide, we denote their common value by \(L\). To prescribe variance independently of correlation,
we normalize the Gaussian windows and define the covariance
through their overlaps,
\begin{equation}\label{pf}
    C_{ij}=\sqrt{v_i v_j}\,\rho_{ij},
    \qquad
    \rho_{ij}
    =
    \frac{\langle g_i,g_j\rangle_h}
         {\|g_i\|_h\|g_j\|_h},
    \qquad
    g_j(s)=
    \exp\!\left[-\frac{2d_\Gamma(s_j,s)^2}{L_j^2}\right],
\end{equation}
where \(d_\Gamma\) is periodic arclength distance and
\(
    \langle f,g\rangle_h
    :=
    \sum_p w_p f(s_p)\overline{g(s_p)}
\)
with \(C_{jj}=v_j\). Different boundary components have
independent fluctuations. For well-resolved windows, neglecting
the periodic wraparound effects gives
the following explicit local covariance approximation, 
\[
    C_{ij}\approx
    \sqrt{v_i v_j}
    \sqrt{\frac{2l_i l_j}{l_i^2+l_j^2}}
    \exp\!\left[
        -\frac{2d_\Gamma(s_i,s_j)^2}
        {\lambda^2(l_i^2+l_j^2)}
    \right],
\]
which agrees with the normalized diffusion model in Appendix~\ref{app:heat-kernel-model} for uniform correlation length. The data second moment is then
\(
    \Sigma_h=A_h(C+m_hm_h^*)A_h^*,
\)
where \(A_h\) is the quadrature-weighted radiation matrix.

\subsubsection{Coherent mean and local variance}
\label{subsubsec:mean-test}
\label{subsubsec:variance-test}

Using the boundary parameter \(\theta\), define
\[
a(\theta)=\frac12+\frac12\cos(\theta-\pi/2),
\]
shown in Figure~\ref{fig:artificial-mean}(a).
Figure~\ref{fig:artificial-mean} compares
\((m,v)=(ae^{i\theta},1)\) in panel~(b) with
\((m,v)=(0,a^2)\) in panel~(c), where the same profile \(a\) specifies the mean magnitude in~(b)
and the fluctuation standard deviation in~(c).
Both cases use \(L/\lambda=10^{-4}\), giving effectively uncorrelated
fluctuations. The boundary remains broadly visible in panel~(b), whereas panel~(c) emphasizes the high-variance portion. This is consistent with the different roles of mean and variance:
the mean contributes through a single coherent data direction,
while the variance profile controls local fluctuation strengths
along the boundary.

\begin{figure}[htbp]
 \centering
 \subfloat[]{\includegraphics[width=0.25\linewidth]{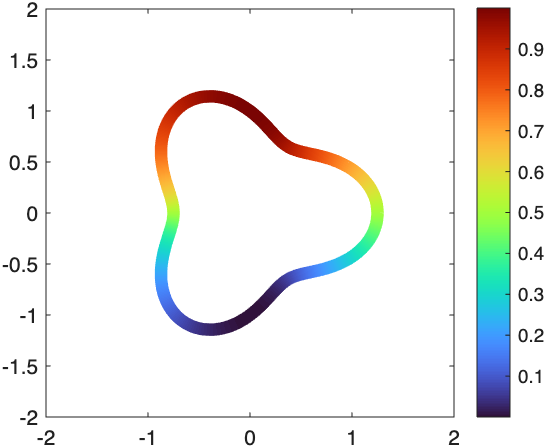}}\qquad
 \subfloat[]{\includegraphics[width=0.25\linewidth]{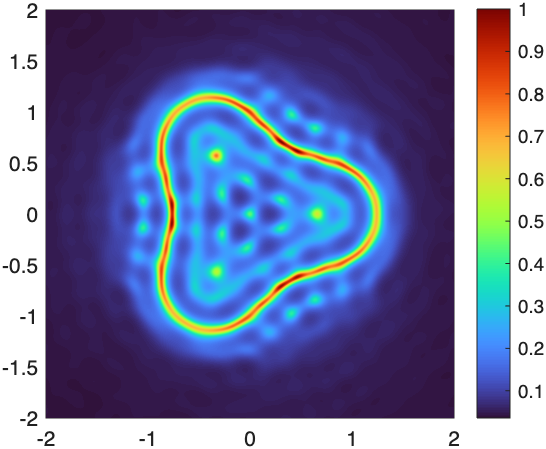}}\qquad
 \subfloat[]{\includegraphics[width=0.25\linewidth]{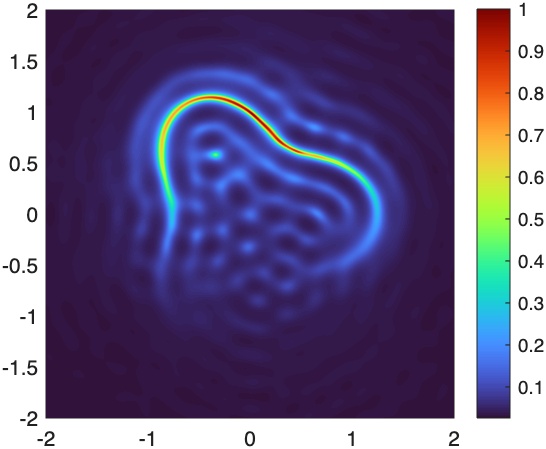}}
 \caption{Coherent mean and local variance with \(L/\lambda=10^{-4}\).
 (a) The common profile \(a\).
 (b) \(Q\)-imaging with \((m,v)=(ae^{i\theta},1)\).
 (c) \(Q\)-imaging with \((m,v)=(0,a^2)\).}
 \label{fig:artificial-mean}
\end{figure}

\subsubsection{Uniform correlation length at fixed variance}
\label{subsubsec:global-correlation-test}

With \(m=0\) and \(v=1\), we vary the uniform correlation length
over \(L/\lambda=1,0.6,0.1\).
Figure~\ref{fig:artificial-global-correlation}(a) shows the
eigenvalues of the noisy far-field covariance matrices, each
normalized by its largest eigenvalue.

As the correlation length decreases, more eigenvalues remain above
the nominal relative noise level of \(10^{-2}\), consistent with
more modes contributing to spatial discrimination at the chosen
regularization level. Correspondingly, panels~(b)--(d) show a
transition from isolated interior peaks to a more continuous
and clearly defined boundary.

These comparisons illustrate improved modal visibility with weak boundary source correlation while the observable modes remain constrained by radiation
and noise, as in the correlation-free reference of
Section~\ref{subsec:correlation-free-resolution}.
\begin{figure}[!htbp]
 \centering
 \subfloat[]{\includegraphics[width=0.24\linewidth]{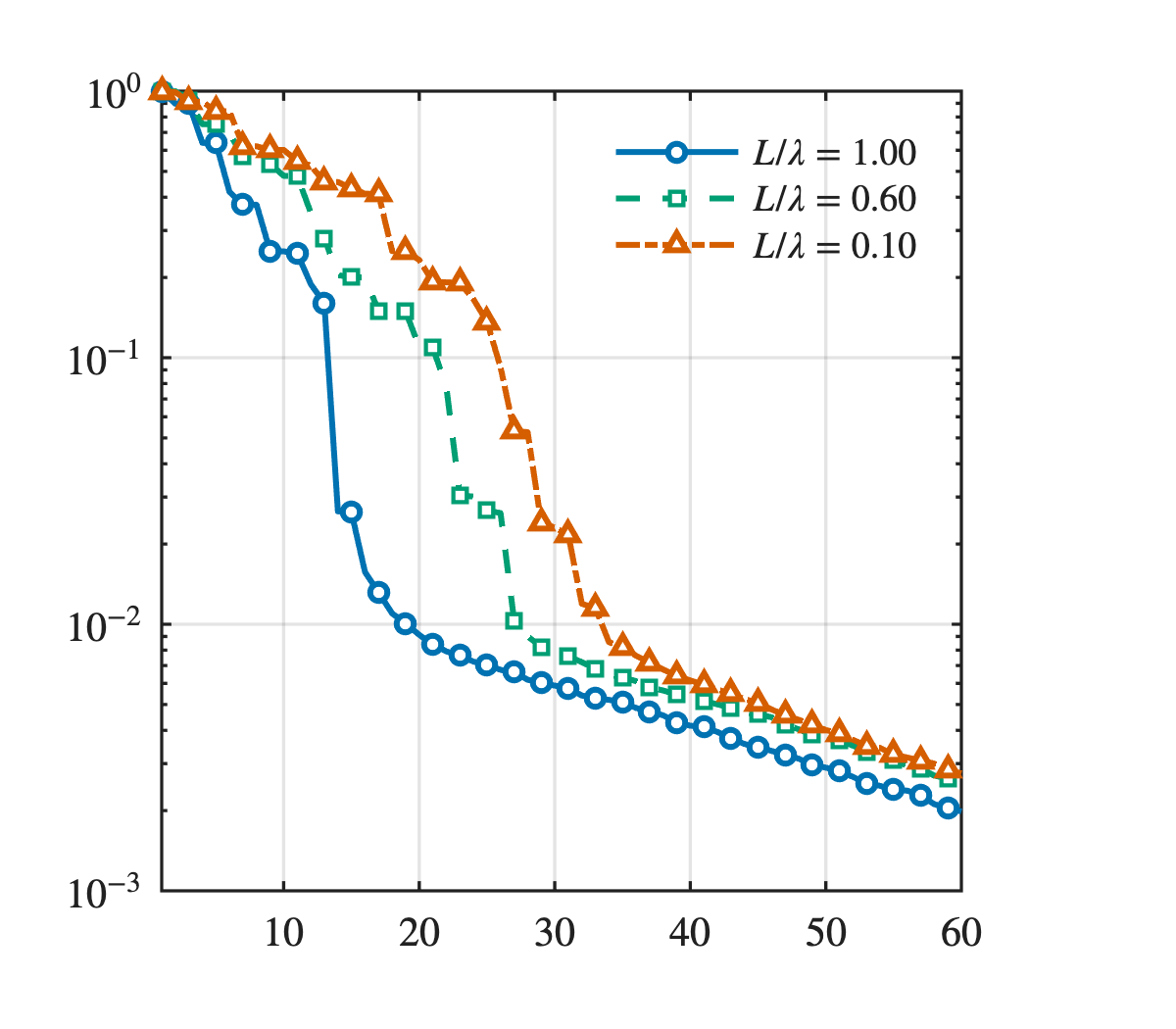}}\hfill
 \subfloat[]{\includegraphics[width=0.24\linewidth]{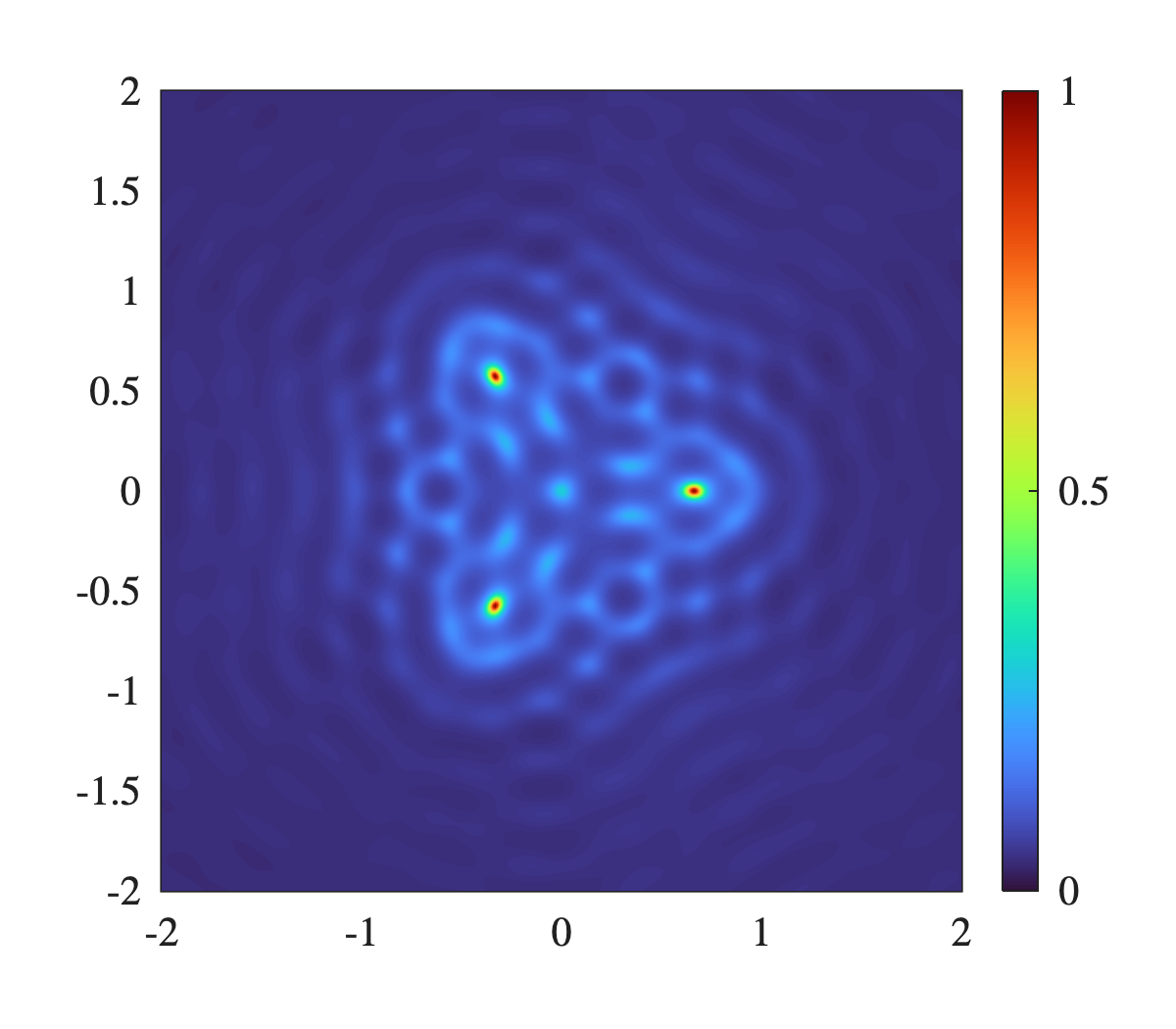}}\hfill
 \subfloat[]{\includegraphics[width=0.24\linewidth]{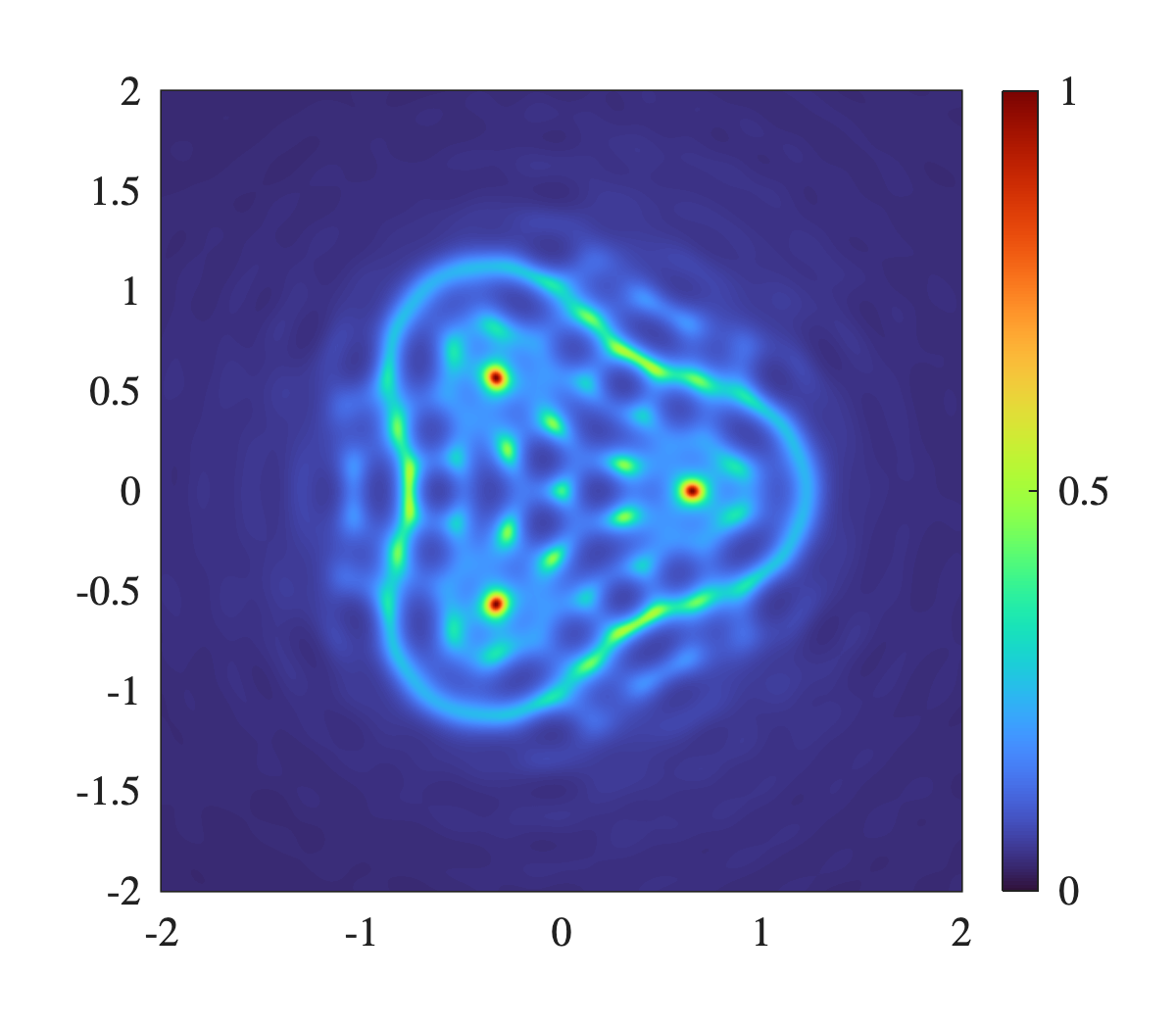}}\hfill
 \subfloat[]{\includegraphics[width=0.24\linewidth]{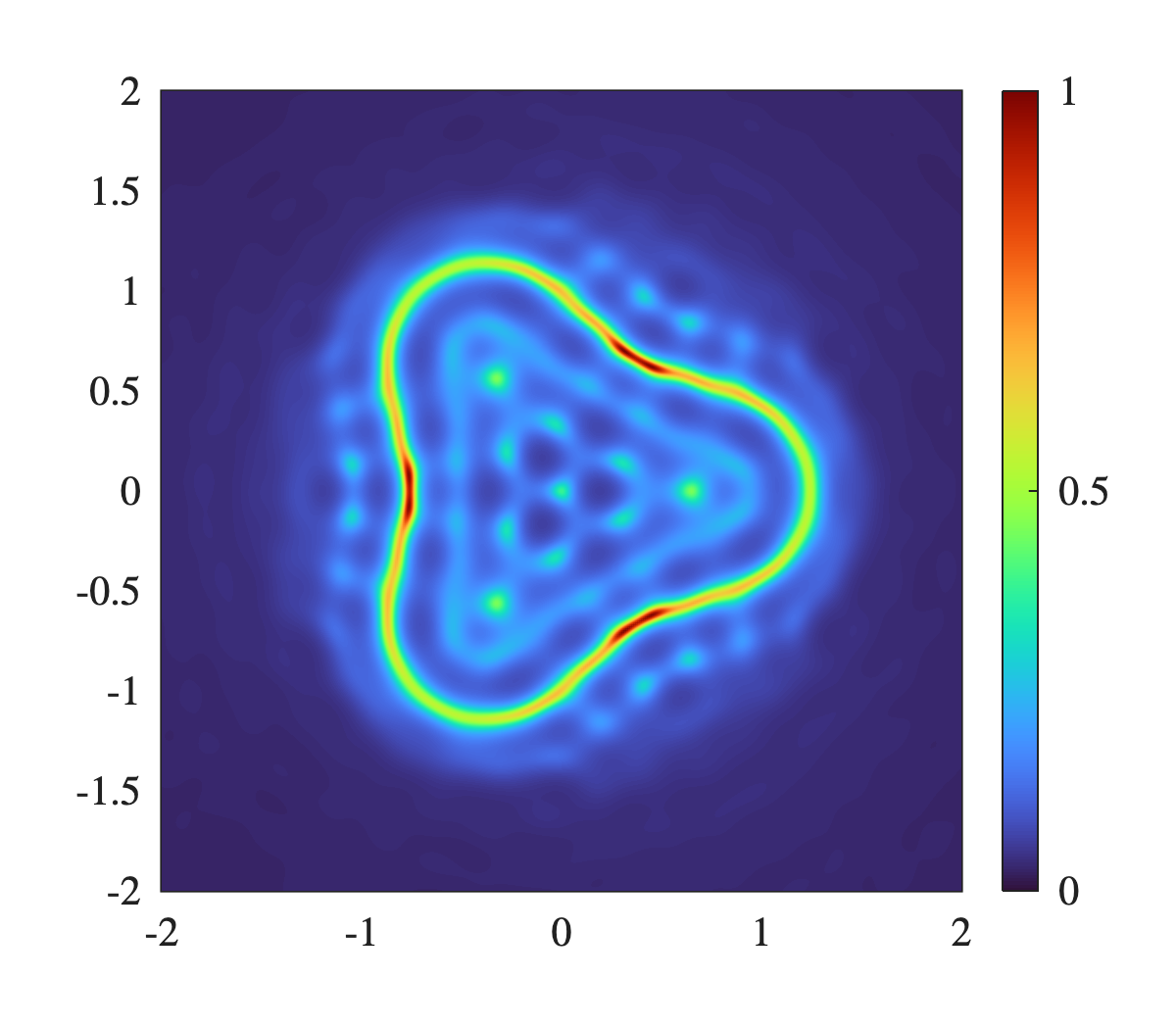}}
 \caption{Uniform source correlation with zero mean and unit variance.
(a) Eigenvalues of the noisy far-field covariance matrices,
normalized by their respective largest eigenvalues.
(b)--(d) Individually normalized \(1/Q_\tau\) images for
\(L/\lambda=1,0.6,0.1\), respectively.
The relative second-moment perturbation level is \(10^{-2}\).}
 \label{fig:artificial-global-correlation}
\end{figure}
\subsubsection{Spatially varying correlation}
\label{subsubsec:local-correlation-test}
With \(m=0\) and \(v=1\),
Figure~\ref{fig:artificial-local-correlation}(a)(c) shows the
source correlation magnitudes for
\[
\frac{L(\theta)}{\lambda}
=
\frac34\bigl[1+\cos(\theta-\pi/2)\bigr]+\epsilon_L,
\qquad
\frac34\bigl[1+\cos(\theta+2\pi/3)\bigr]+\epsilon_L,
\]
respectively, with
broader diagonal bands indicating longer-range correlation and $\epsilon_L=10^{-4}$ to ensure strict positivity of the correlation length. In the corresponding images in Figure~\ref{fig:artificial-local-correlation}(b)(d), the clearer
boundary contours shift from the lower to the upper portion
of the obstacle, consistent with the shift of the
short-correlation regions.

\begin{figure}[htbp]
 \centering
 \subfloat[]{\includegraphics[width=0.23\linewidth]{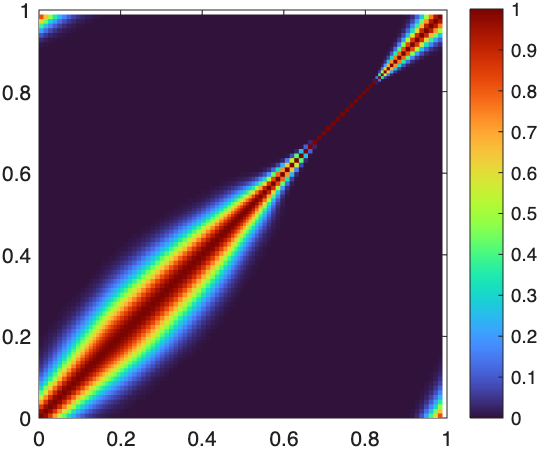}}\hfill
 \subfloat[]{\includegraphics[width=0.23\linewidth]{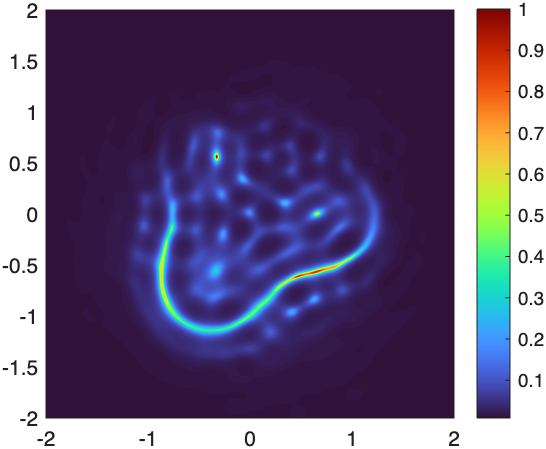}}\hfill
 \subfloat[]{\includegraphics[width=0.23\linewidth]{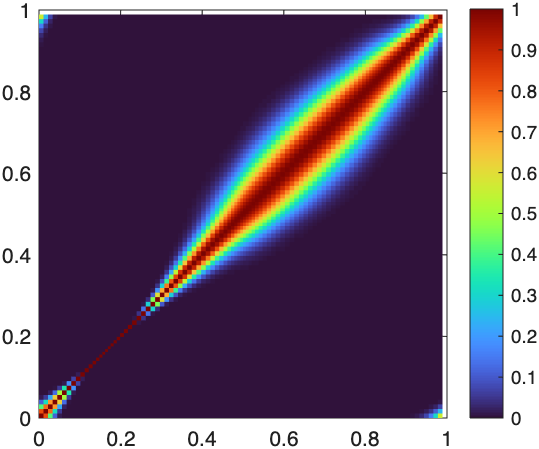}}\hfill
 \subfloat[]{\includegraphics[width=0.23\linewidth]{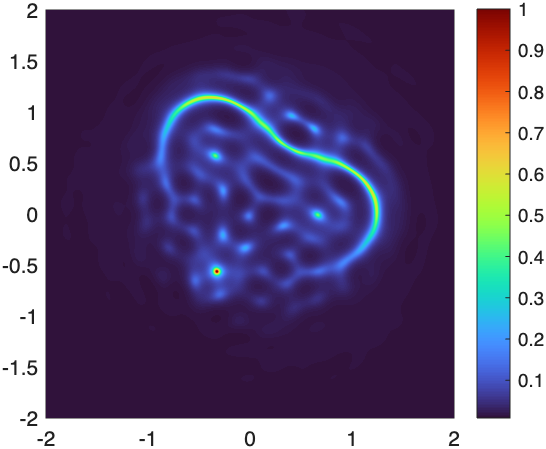}}
 \caption{Spatially varying correlation with zero mean and unit variance along the boundary of the pear-shaped obstacle.
(a)(c) Source correlation magnitudes for the two correlation-length
profiles, respectively, plotted against normalized arclength.
(b)(d) Corresponding individually normalized \(1/Q_\tau\) images.
}
 \label{fig:artificial-local-correlation}
\end{figure}

\subsubsection{Source statistics and facing-boundary visibility}
\label{subsubsec:facing-source-control}

We next examine how local variance and correlation affect the visibility of two facing boundaries at a fixed separation. Figure~\ref{fig:varcor} compares the effects of local variance
and correlation on two radius-\(0.7\) disks centered at
\((\pm0.8,0)\), with a gap of \(0.4\lambda\).
The mean is zero throughout.
The complementary profiles in panels~(a)(d) prescribe
the standard deviation in panels~(b)(e) with nearly independent
fluctuations, and the correlation length \(L/\lambda\)
in panels~(c)(f) with unit variance.

In panels~(b)(e), larger variance on the facing arcs makes
these arcs more prominent in the normalized images.
In panels~(c)(f), the facing arcs are more clearly delineated
when their local correlation length is shorter, despite
unchanged variance.
Thus, even at the same separation, local source statistics
can substantially affect facing-boundary visibility.

The near-boundary point-source example in
Figure~\ref{fig:point-source-improvement} complements these
prescribed-source experiments, illustrating how a change in illumination
can improve obstacle separation for the configuration considered.

\begin{figure}[htbp]
 \centering
 \subfloat[]{\includegraphics[width=0.25\linewidth]{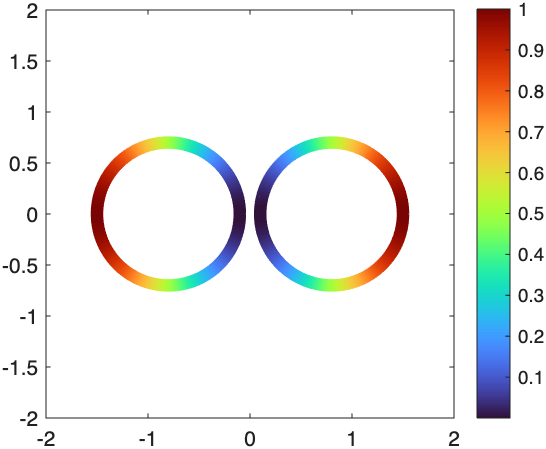}}\qquad
 \subfloat[]{\includegraphics[width=0.25\linewidth]{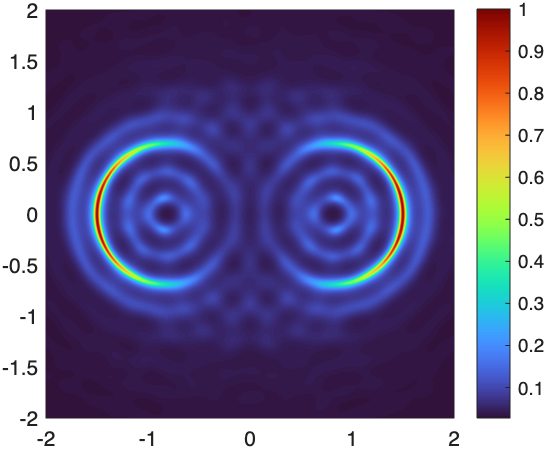}}\qquad
 \subfloat[]{\includegraphics[width=0.25\linewidth]{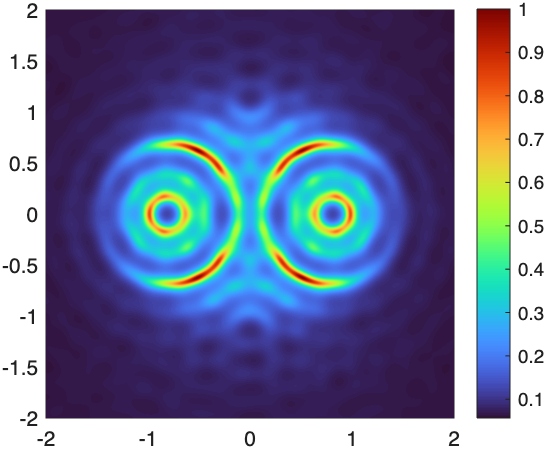}}\\[3pt]
 \subfloat[]{\includegraphics[width=0.25\linewidth]{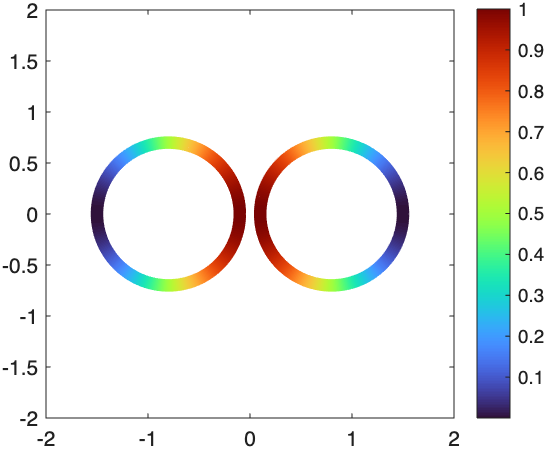}}\qquad
 \subfloat[]{\includegraphics[width=0.25\linewidth]{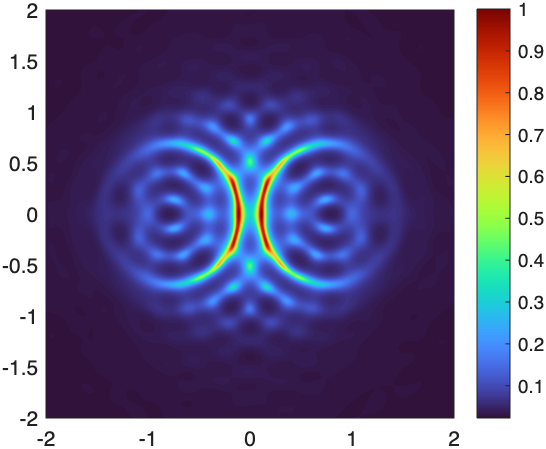}}\qquad
 \subfloat[]{\includegraphics[width=0.25\linewidth]{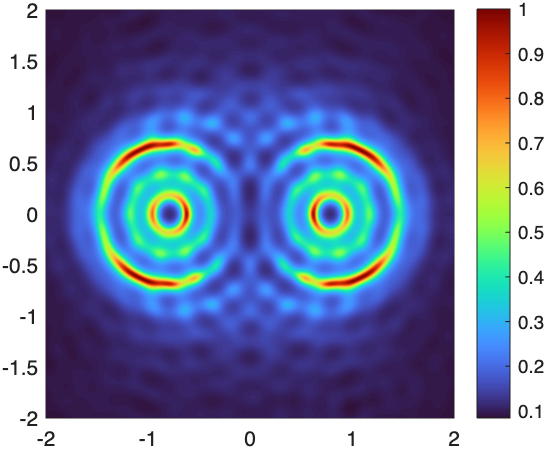}}
 \caption{Prescribed-source control for two disks with gap \(0.4\lambda\).
				Left: complementary profiles used as the fluctuation standard deviation in the middle
				column and as \(L/\lambda\) in the right column. Middle: images with
				standard deviation in (a)(d) and \(L/\lambda=10^{-4}\).
				Right: images with \(L/\lambda\) in (a)(d) and unit standard deviation.}
			\label{fig:varcor}
\end{figure}

\subsection{Scattering-induced boundary-source statistics}
\label{subsec:part3-physical-statistics}

In contrast to Section~\ref{subsec:part2-artificial-statistics}, the boundary sources are now
obtained from the scattering equation for prescribed illumination.
We compare their centered variance and correlation with
the resulting images to examine the mechanisms discussed
in Section~\ref{sec:physical-statistics-consequences}. Imaging uses the raw data second moment, including the coherent mean. The displayed source statistics are the centered variance $v_\varphi(s)=\mathbb E|\varphi(s)-m(s)|^2$ and the correlation magnitude $|C(s,s')|/\sqrt{v_\varphi(s)v_\varphi(s')}$. Boundary locations are shown in normalized arclength.

\subsubsection{Limited-aperture illumination}
\label{subsubsec:peanut-limited-aperture}

The incident plane-wave directions are uniformly distributed
over \((0,\pi)\), while far-field observations cover all directions.
In Figure~\ref{fig:peanut-limited-aperture}, the directly
illuminated boundary portions generally exhibit higher relative
source variance and narrower local correlation bands than
the weakly illuminated portions.
These portions are also more clearly delineated in the images, showing a qualitative correspondence between the computed source statistics and local boundary visibility.

\begin{figure}[htbp]
 \centering
 \subfloat[]{\includegraphics[width=0.75\linewidth]{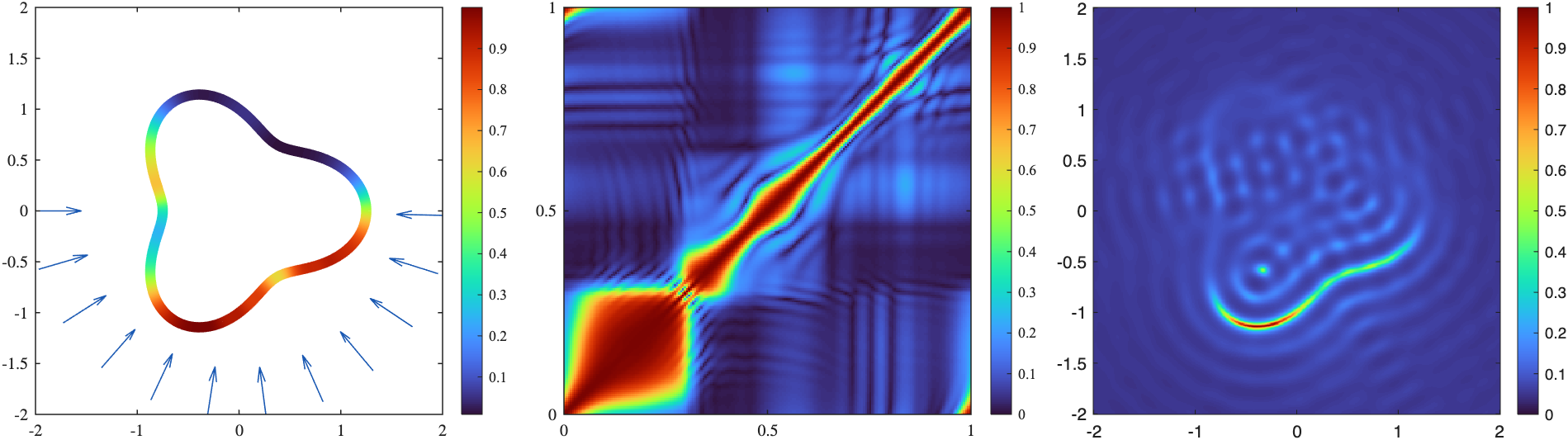}}\\[3pt]
 \subfloat[]{\includegraphics[width=0.75\linewidth]{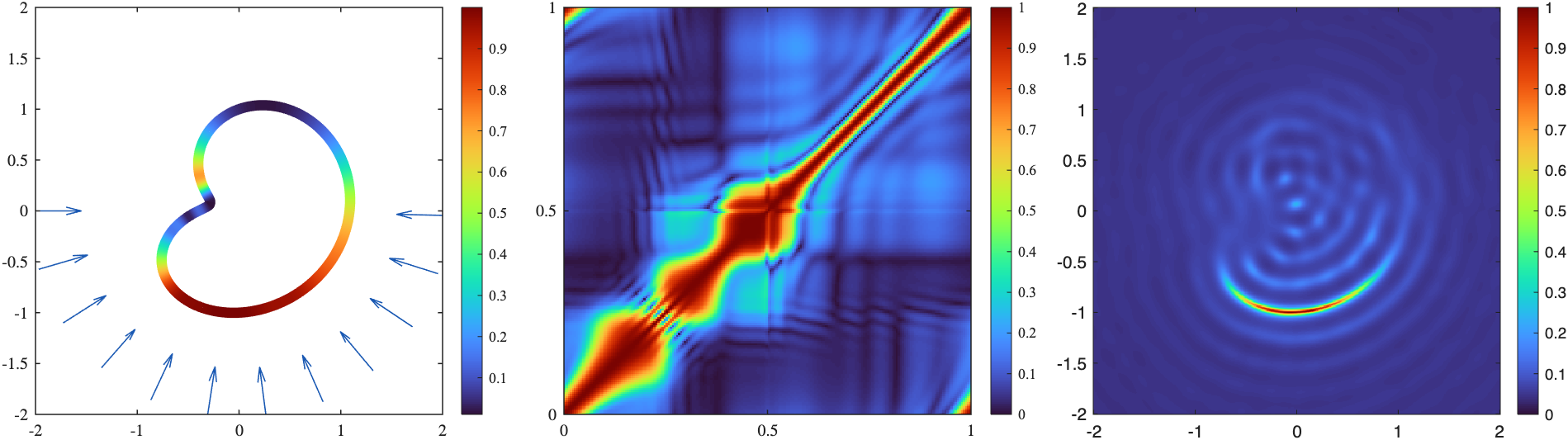}}
 \caption{Limited-aperture plane-wave illumination, with incident angles
 uniformly distributed over \((0,\pi)\), for (a) a pear and (b) a
 smooth nonconvex obstacle. Each row shows, from left to right: the incident
 aperture and normalized boundary-source variance; centered source
 correlation magnitude; and normalized \(1/Q_\tau\) image.}
 \label{fig:peanut-limited-aperture}
\end{figure}

\subsubsection{Nonconvex obstacles under full-aperture illumination}

Figure~\ref{fig:starfish-concavity} shows the computed source
statistics and images for two nonconvex obstacles under
full-aperture plane-wave illumination.
In these examples, the recessed boundary portions exhibit
lower relative source variance and stronger local correlation,
and are less clearly delineated in the images.
\begin{figure}[htbp]
 \centering
 \subfloat[]{\includegraphics[width=0.75\linewidth]{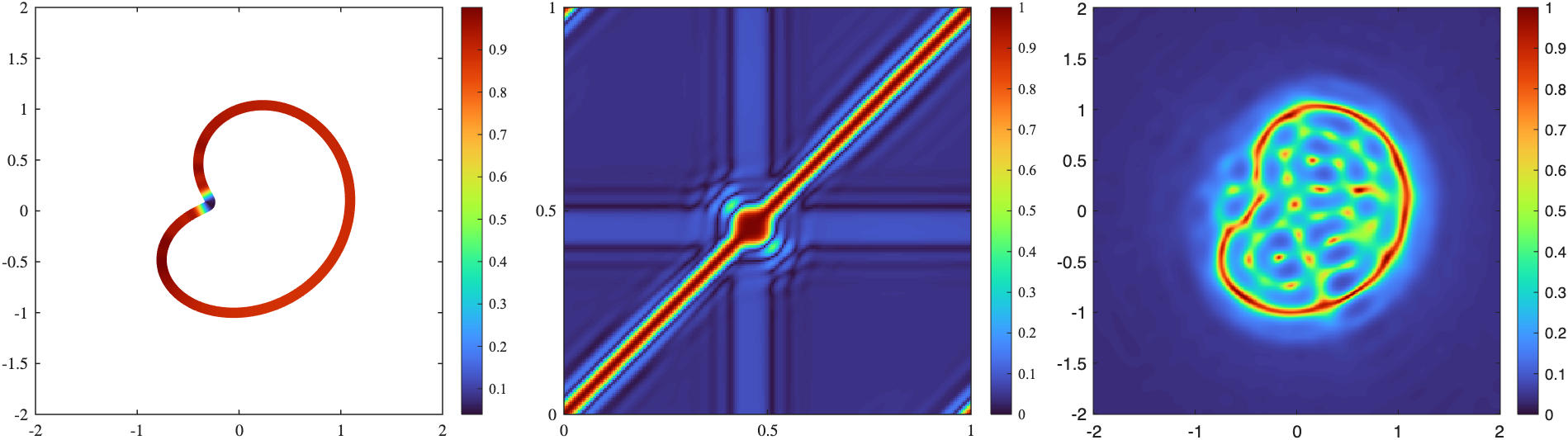}}\\[3pt]
 \subfloat[]{\includegraphics[width=0.75\linewidth]{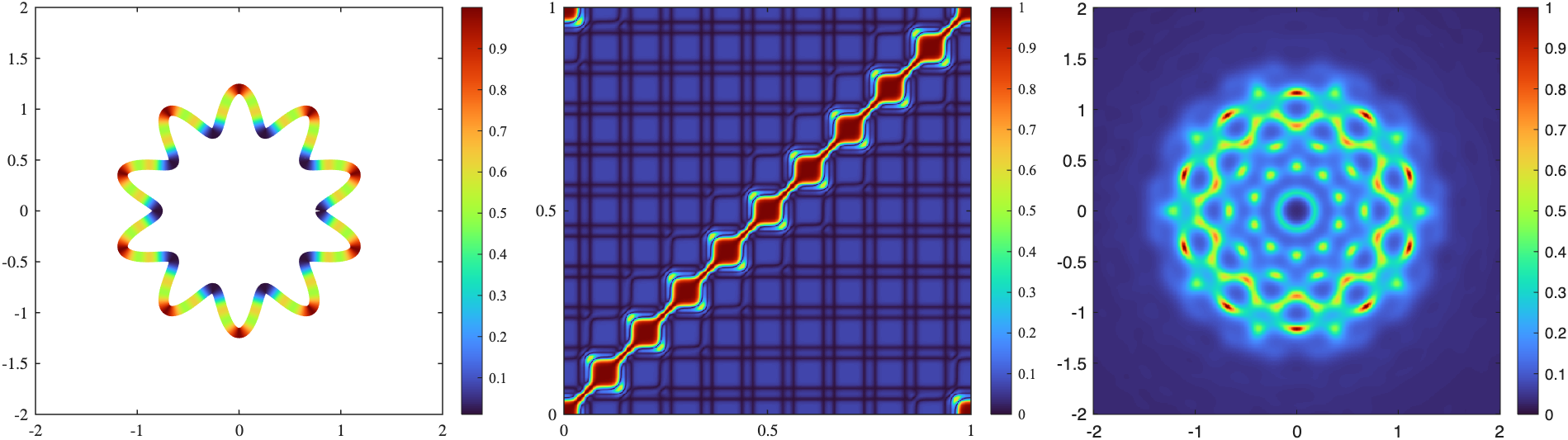}}
 \caption{Source statistics and imaging under full-aperture
plane-wave illumination for (a) a smooth nonconvex obstacle
and (b) a star-shaped obstacle.
Columns show normalized boundary-source variance,
centered source correlation magnitude, and individually
normalized \(1/Q_\tau\) images.}
 \label{fig:starfish-concavity}
\end{figure}

For the same boundaries,
Figure~\ref{fig:correlation-free-geometry} uses prescribed
equal-strength, correlation-free sources, with the observation
aperture and relative noise level unchanged.
The recess and star-shaped valleys become more clearly visible, consistent with the conformal scale $h_\Gamma(x)$ in Section \ref{confo}.
This comparison indicates that source statistics contribute
to the poor visibility of these features under plane-wave
illumination.
\begin{figure}[htbp]
 \centering
 \subfloat[]{\includegraphics[width=0.35\linewidth]{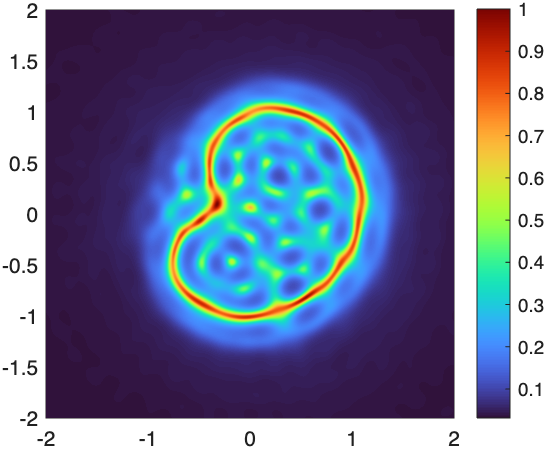}}\qquad
 \subfloat[]{\includegraphics[width=0.35\linewidth]{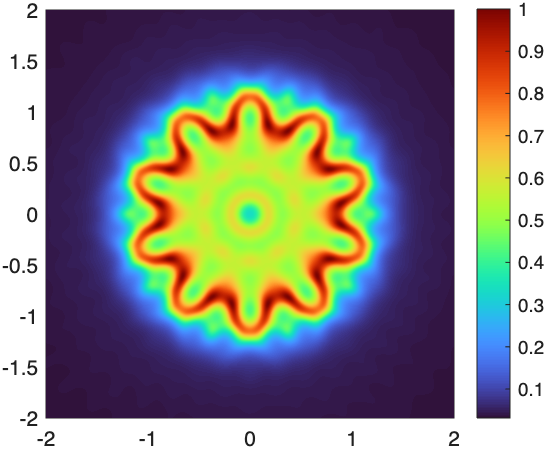}}
 \caption{Individually normalized \(1/Q_\tau\) images for the
same obstacles as in Figure~\ref{fig:starfish-concavity},
using prescribed equal-variance, correlation-free sources.
}
 \label{fig:correlation-free-geometry}
\end{figure}
\subsubsection{Closely spaced obstacles}
\label{subsubsec:two-ellipses-facing}
Figure~\ref{fig:two-ellipses-facing} compares two disk
configurations with gaps of \(2\lambda\) and \(0.4\lambda\)
under full-aperture plane-wave illumination.
At the smaller gap, the facing boundary contours are
less clearly separated.
The computed source statistics show low relative variance
on parts of the facing arcs and correlated fluctuations
between the two disks.

For the same close-spacing geometry, the prescribed-source
controls in Figure~\ref{fig:varcor} and the near-boundary
point-source and correlation-free examples in
Figure~\ref{fig:point-source-improvement} show that
facing-boundary visibility can be improved without changing
the far-field observation aperture.
These comparisons indicate that the observed gap blurring
depends on source statistics as well as separation.

\begin{figure}[!htbp]
 \centering
 \subfloat[]{\includegraphics[width=0.75\linewidth]{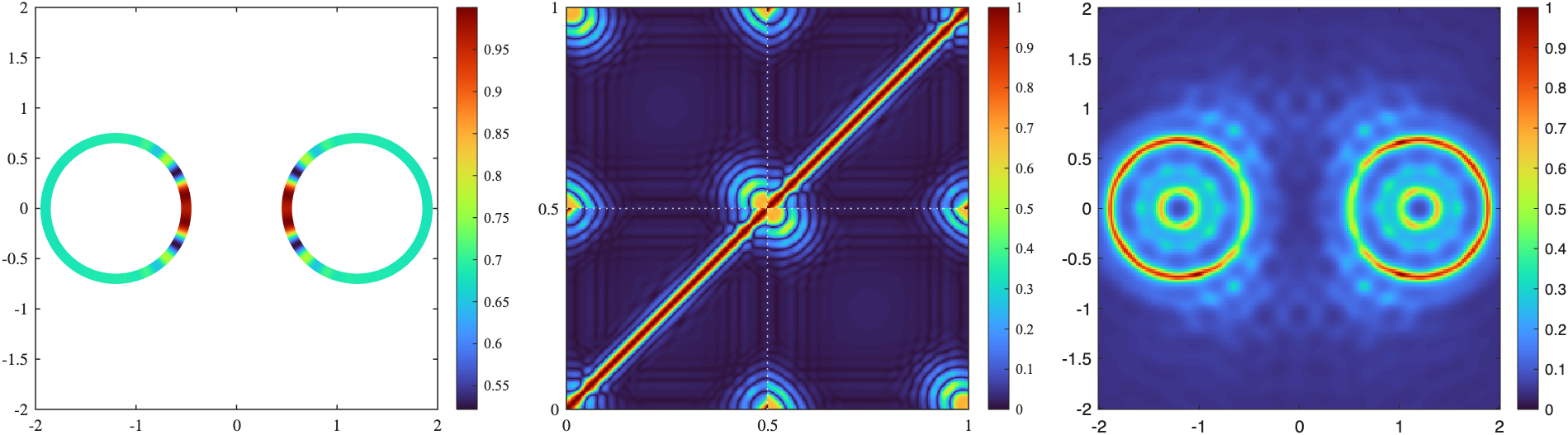}}\\[3pt]
 \subfloat[]{\includegraphics[width=0.75\linewidth]{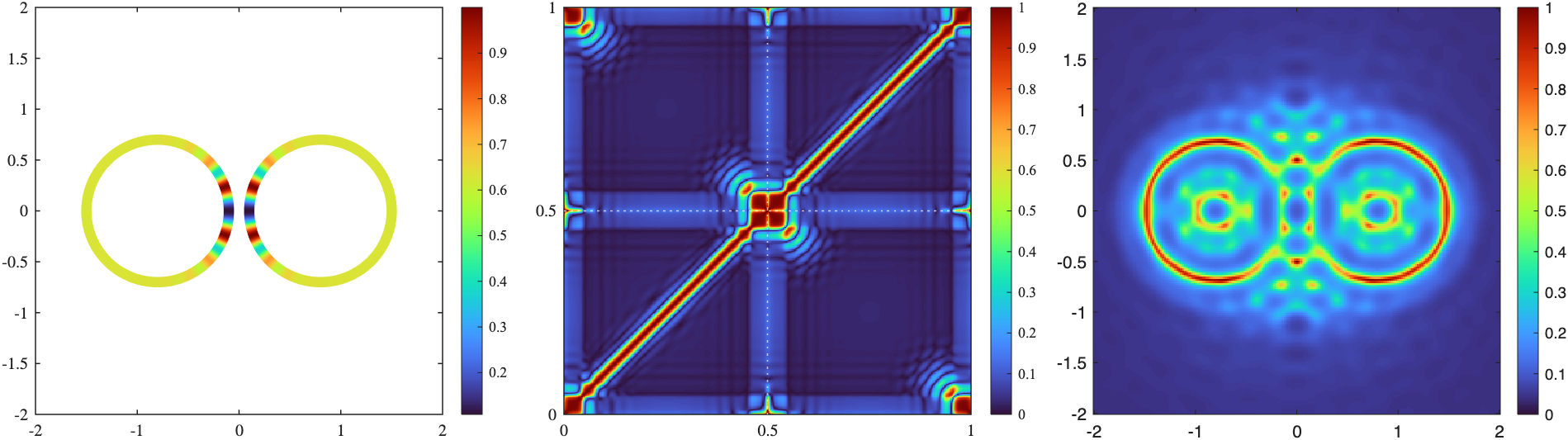}}
 \caption{Two radius-\(0.7\) disks under full-aperture plane-wave
 illumination, with centers at (a) \((\pm1.2,0)\) and
 (b) \((\pm0.8,0)\). Columns show boundary-source variance,
 centered source correlation magnitude, and normalized \(1/Q_\tau\).
 The dotted lines in the correlation plots separate the arclength
intervals of the two boundary components.}
 \label{fig:two-ellipses-facing}
\end{figure}
	
	\section{Conclusion}
\label{connnn}
We have developed a statistical framework for obstacle imaging
and resolution analysis from far-field second moments under
random illumination. The \(Q\)-statistic admits an interpretation as the minimal statistically weighted
source-synthesis cost of reproducing a test far field,
and the reciprocal of its regularized form  provides an imaging indicator.
Under uniform full-aperture plane-wave illumination, the identity
\(\Sigma=(2\pi)^{-1}FF^*\) yields statistical counterparts of
classical sampling methods, distinguished by their spectral
weighting.

The source-to-data factorization separates the effects of
boundary-source statistics from those of radiation and noise.
Within this framework, we define a correlation-free resolution
limit through the one-sided half-height width of the regularized
boundary transition. At fixed nonresonant wavenumber, we prove a universal boundary-layer profile and a sharp half-height asymptotic for the correlation-free regularized statistic on simply connected analytic boundaries, where the local conformal scale determines both the leading boundary intensity and the physical transition width.

For general boundary-source statistics, we characterize how
coherent mean, fluctuation variance, and correlation affect
imaging contrast through their distinct contributions to source
energy and modal diversity. Combined with the induced-source
factorization, this analysis provides qualitative explanations
of resolution mechanisms associated with limited illumination
aperture, nonconvex geometry, closely spaced obstacles, and
near-boundary point-source excitation. The numerical examples
illustrate these mechanisms and show that practical resolution
depends jointly on source statistics, radiation, and noise,
rather than on wavelength alone.

This statistical framework can be readily extended to other inverse scattering settings, including multiple frequencies, inverse medium scattering, and inverse electromagnetic and elastic scattering, which we plan to address in future work.  

\section*{Use of generative AI}
We used GPT-6 Astra (OpenAI) to improve the language and
readability of the manuscript and to suggest proof strategies
for Appendices A and C. We reviewed the AI-assisted content,
verified all mathematical arguments, and take full
responsibility for the manuscript.
	\appendix

\section{Random near-boundary point-source illumination}
\label{appendix-point source}

  We show that near-boundary random monopoles generate the
			correlation-free far-field reference of
			Section~\ref{subsec:correlation-free-resolution} in the limit.
			Let \(\Gamma=\bigcup_{j=1}^{m}\partial D_j\), where the bounded
			obstacles have pairwise disjoint closures and \(C^2\) simple closed
			boundaries, and let \(\nu\) be the outward unit normal.
			Fix \(k>0\) such that \(k^2\) is not a Dirichlet eigenvalue in any
			\(D_j\), and write \(S=S_\Gamma\).
			For sufficiently small \(\delta>0\), place a monopole of fixed
			amplitude \(a\ne0\) uniformly with respect to arclength on
			\(\Gamma_\delta=\{z_\delta(y)=y+\delta\nu(y):y\in\Gamma\}\).
			Writing its random location as \(z_\delta(Y)\), the density of
			\(Y\) on \(\Gamma\) is
			\begin{equation}
				\sigma_\delta(y)
				=\frac{1+\delta\kappa(y)}{|\Gamma|+2\pi m\delta}
				=\frac1{|\Gamma|}+\mathcal O(\delta)
				\label{eq:density}
			\end{equation}
			uniformly, where \(\kappa\) is curvature with
			\(\partial_s\nu=\kappa\,\partial_s y\).
			Set
			\[
			g_\delta(x)=a\Phi_k(x,z_\delta(Y)),\qquad
			\varphi_\delta=-S^{-1}g_\delta,\qquad
			y_\delta=A_\Gamma\varphi_\delta .
			\]
			Here \(g_\delta\) is the incident trace and \(y_\delta\) the
			scattered far field. Their raw second moments are
			\(\mathcal K_{i,\delta}=\mathbb E[g_\delta\otimes g_\delta^*]\)
			on \(L^2(\Gamma)\) and
			\(\Sigma_\delta=\mathbb E[y_\delta\otimes y_\delta^*]\)
			on \(H=L^2(\mathbb S^1)\).

			\begin{theorem}[Monopole far-field limit]
				\label{thm:monopole}
				\label{thm:monopole-farfield}
				With \(\kappa_0=|a|^2/|\Gamma|\), as \(\delta\downarrow0\),
				\begin{align}
					\|\mathcal K_{i,\delta}-\kappa_0 SS^*\|_{\mathrm{HS}}
					&\longrightarrow0,
					\label{eq:monopole-incident-limit}\\
					\|\Sigma_\delta-\kappa_0 A_\Gamma A_\Gamma^*\|_{H\to H}
					&\longrightarrow0.
					\label{eq:monopole-farfield-limit}
				\end{align}
			\end{theorem}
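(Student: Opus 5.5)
The plan is to prove the incident-trace limit \eqref{eq:monopole-incident-limit} first, and then transfer it to the far field through the factorization $\Sigma_\delta=A_\Gamma S^{-1}\mathcal K_{i,\delta}S^{-*}A_\Gamma^*$. The kernel of $\mathcal K_{i,\delta}$ is
\[
K_\delta(x,x')=|a|^2\int_\Gamma \Phi_k(x,z_\delta(y))\,\overline{\Phi_k(x',z_\delta(y))}\,\sigma_\delta(y)\,ds(y),
\]
and the kernel of $\kappa_0SS^*$ is
\[
\kappa_0\int_\Gamma\Phi_k(x,y)\overline{\Phi_k(x',y)}\,ds(y).
\]
By \eqref{eq:density} we may replace $\sigma_\delta$ by $1/|\Gamma|$ at cost $\mathcal O(\delta)$ in Hilbert--Schmidt norm. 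This uses the uniform bound $\sup_\delta\int_\Gamma\|\Phi_k(\cdot,z_\delta(y))\|_{L^2(\Gamma)}^2ds(y)<\infty$, which holds because $\Phi_k$ has only a logarithmic singularity.

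It then suffices to show $\|S_\delta S_\delta^*-SS^*\|_{\mathrm{HS}}\to0$, where $S_\delta:L^2(\Gamma)\to L^2(\Gamma)$ is the operator with kernel $\Phi_k(x,z_\delta(y))$. For this I would prove $\|S_\delta-S\|_{\mathrm{HS}}\to0$ and use
\[
\|S_\delta S_\delta^*-SS^*\|_{\mathrm{HS}}\le(\|S_\delta\|+\|S\|)\,\|S_\delta-S\|_{\mathrm{HS}}.
\]
The kernel difference $\Phi_k(x,y+\delta\nu(y))-\Phi_k(x,y)$ tends to $0$ pointwise for $x\ne y$. It is dominated by $C(1+|\log|x-y||)$, since $|x-z_\delta(y)|\ge c|x-y|$ uniformly for small $\delta$ on a $C^2$ curve. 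Dominated convergence in $L^2(\Gamma\times\Gamma)$ then gives the claim.

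For the far field, the incident limit alone is not enough, because $S^{-1}:H^1\to L^2$ is unbounded on $L^2$. The key identity is $S^{-1}S=I$ on $L^2(\Gamma)$, which gives
\[
\kappa_0A_\Gamma S^{-1}(SS^*)S^{-*}A_\Gamma^*=\kappa_0A_\Gamma A_\Gamma^*.
\]
So I would write $\Sigma_\delta=\frac{|a|^2}{|\Gamma|}A_\Gamma T_\delta W_\delta T_\delta^*A_\Gamma^*$, where $T_\delta:=S^{-1}S_\delta$ and $W_\delta$ is multiplication by $|\Gamma|\sigma_\delta=1+\mathcal O(\delta)$. The goal is then $\|A_\Gamma T_\delta-A_\Gamma\|_{L^2(\Gamma)\to H}\to0$ together with a uniform bound on $A_\Gamma T_\delta$. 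There is a cleaner description of $A_\Gamma T_\delta$. The function $u=S_\Gamma(T_\delta q)$ is the radiating solution outside $D$ with boundary value $S_\delta q$, so $A_\Gamma T_\delta q$ is the far field of the exterior Dirichlet problem with data $S_\delta q$. For $q$ supported near $y$, this is the scattered far field of the point sources $z_\delta(y)$, and it equals (up to sign) the far field of the total field minus that of the incident field.

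I expect this operator-norm convergence to be the main obstacle. The data $S_\delta q\to Sq$ in $L^2(\Gamma)$, but the exterior Dirichlet solution operator is controlled only through $H^{1/2}$ data. Dirichlet-to-far-field maps are nevertheless smoothing from $L^2$-boundary data, so I would first try to show that the data-to-far-field map $\mathcal F:L^2(\Gamma)\to H$ is bounded (indeed compact). That would follow from a combined-field (Brakhage--Werner) representation with a density solving a second-kind equation on $L^2(\Gamma)$, valid under the nonresonance assumption. Then
\[
\|A_\Gamma T_\delta-A_\Gamma\|=\|\mathcal F(S_\delta-S)\|\le\|\mathcal F\|_{L^2\to H}\,\|S_\delta-S\|\to0,
\]
by the first step. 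Expanding the sandwich and using the uniform bounds on $\mathcal F$, $S_\delta$ and $W_\delta$ then yields \eqref{eq:monopole-farfield-limit}.
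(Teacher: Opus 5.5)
Your proposal is correct and follows the same skeleton as the paper's proof. Dominated convergence with a logarithmic majorant gives $\|\mathcal T_\delta-S\|_{\mathrm{HS}}\to0$ (your $S_\delta$ is the paper's $\mathcal T_\delta$), and the density correction costs $\mathcal O(\delta)$. The far-field statement then comes from sandwiching with a bounded $L^2(\Gamma)\to H$ extension $\mathcal F$ of the data-to-far-field map $A_\Gamma S^{-1}$, which satisfies $\mathcal F S=A_\Gamma$.

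The only real difference is how that extension is obtained. The paper does it by duality in one line. Since $A_\Gamma^*$ maps $H$ boundedly into $H^1(\Gamma)$ (Herglotz traces are smooth) and $(S^*)^{-1}:H^1(\Gamma)\to L^2(\Gamma)$ is bounded, the operator $W=(S^*)^{-1}A_\Gamma^*:H\to L^2(\Gamma)$ is bounded. Its adjoint $-W^*$ is then automatically the bounded extension of $-A_\Gamma S^{-1}$, and $\mathcal B_\Gamma S=-A_\Gamma$ follows from the adjoint identity. So the step you flagged as the main obstacle costs nothing.

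Your Brakhage--Werner route also works. It does not even need the nonresonance assumption, which is required only for $S^{-1}$ itself. But it needs three extra ingredients:
\begin{itemize}
\item Fredholm theory for $\tfrac12 I+K-i\eta S$ on $L^2(\Gamma)$ for a $C^2$ curve;
\item an $L^2$-injectivity argument;
\item a uniqueness or density check that the combined-field far field coincides with $A_\Gamma S^{-1}g$ for $g\in H^1(\Gamma)$.
\end{itemize}
In exchange, you get an independent description of $\mathcal F$ as the $L^2$ Dirichlet-to-far-field map.

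Finally, once $\mathcal F$ is bounded you do not need to expand $A_\Gamma T_\delta W_\delta T_\delta^*A_\Gamma^*$. Because $\Sigma_\delta-\kappa_0A_\Gamma A_\Gamma^*=\mathcal F(\mathcal K_{i,\delta}-\kappa_0SS^*)\mathcal F^*$, the far-field limit follows directly from your first step. This is exactly how the paper concludes.
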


			\begin{proof}
				\emph{Incident traces.}
				Define the multiplication
				operator
                \(M_b:L^2(\Gamma)\to L^2(\Gamma)\) by
				\(
					(M_bf)(y)=b(y)f(y),
				\) and 
                the source-location-to-trace operator
				\(\mathcal T_\delta:L^2(\Gamma)\to L^2(\Gamma)\)
				by  
                \[
					(\mathcal T_\delta f)(x)
					=\int_\Gamma\Phi_k(x,z_\delta(y))f(y)\,ds(y).
				\]
				Averaging the outer product of \(g_\delta\) over its
				random location gives
				\[
				\mathcal K_{i,\delta}
				=|a|^2\mathcal T_\delta
				M_{\sigma_\delta}\mathcal T_\delta^*.
				\]
				The \(C^2\) geometry and separation of the components give
				\(|\nu(y)\cdot(x-y)|\le C|x-y|^2\), and hence
				\(|x-z_\delta(y)|^2\ge
				\tfrac12(|x-y|^2+\delta^2)\) for small \(\delta\).
				The logarithmic singularity of \(\Phi_k\) therefore implies
				\[
				|\Phi_k(x,z_\delta(y))|+|\Phi_k(x,y)|
				\le C(1+|\log|x-y||)
				\in L^2(\Gamma\times\Gamma).
				\]
				Since the kernels converge off the diagonal, dominated
				convergence gives \[
					\|\mathcal T_\delta-S\|_{\mathrm{HS}}^2
					=\int_{\Gamma\times\Gamma}
					|\Phi_k(x,z_\delta(y))-\Phi_k(x,y)|^2\,ds(x)\,ds(y)
					\longrightarrow0.
				\]
				Together with \eqref{eq:density} and the factorization \[\mathcal T_\delta M_{\sigma_\delta}\mathcal T_\delta^*
						-|\Gamma|^{-1}SS^*
						=(\mathcal T_\delta-S)M_{\sigma_\delta}\mathcal T_\delta^*
						+SM_{\sigma_\delta}(\mathcal T_\delta^*-S^*)+S(M_{\sigma_\delta}-|\Gamma|^{-1}I)S^*,\] the Hilbert--Schmidt product
				estimates yield
				\[
				\|\mathcal K_{i,\delta}-\kappa_0 SS^*\|_{\mathrm{HS}}
				\le C\left(\|\mathcal T_\delta-S\|_{\mathrm{HS}}
				+\|\sigma_\delta-|\Gamma|^{-1}\|_\infty\right)\to0.
				\]

				\emph{Scattered far field.}
				Standard single-layer regularity and the nonresonance condition give
				bounded inverses
				\(S^{-1},(S^*)^{-1}:H^1(\Gamma)\to L^2(\Gamma)\)
				\cite{coltonInverseAcousticElectromagnetic2019,KirschGrinberg2008},
				while \(A_\Gamma^*:H\to H^1(\Gamma)\) is bounded.
				Here \(S^*\) is the \(L^2\) adjoint.
				Thus \(W=(S^*)^{-1}A_\Gamma^*\) is bounded, and
				\(\mathcal B_\Gamma=-W^*:L^2(\Gamma)\to H\)
				extends \(-A_\Gamma S^{-1}\), since for \(g\in H^1(\Gamma)\),
				\(f\in H\),
				\[
				\langle-A_\Gamma S^{-1}g,f\rangle_H
				=-\langle g,Wf\rangle_{L^2(\Gamma)}
				=\langle\mathcal B_\Gamma g,f\rangle_H.
				\]
				In particular, \(\mathcal B_\Gamma S=-A_\Gamma\).
				For fixed \(\delta>0\), \(g_\delta\in H^1(\Gamma)\), so
				\(\Sigma_\delta=
				\mathcal B_\Gamma\mathcal K_{i,\delta}\mathcal B_\Gamma^*\).
				Consequently,
				\[
				\|\Sigma_\delta-\kappa_0 A_\Gamma A_\Gamma^*\|
				\le \|\mathcal B_\Gamma\|^2
				\|\mathcal K_{i,\delta}-\kappa_0 SS^*\|_{\mathrm{HS}}
				\longrightarrow0.
				\]
			\end{proof}
    
    \section{Regularized Disk Estimates}
\label{app:disk-regularized-estimates}

This appendix supplies the estimates used in
Theorem~\ref{thm:disk-halfheight-resolution}. Throughout, $R,k>0$ are fixed,
$J_n(kR)\ne0$ for all $n\in\mathbb Z$, and $N=N_\tau$ is defined by
\eqref{eq:disk-effective-order}. Recall that
$L_\tau=\log(\|A\|^2/\tau)$. All estimates concern the full regularized
series \eqref{eq:disk-exact-Q}.

\begin{lemma}[Regularized series and effective angular order]
	\label{lem:disk-regularized-estimates}
	As $\tau\downarrow0$, $N\to\infty$ and, for every fixed $T>0$,
	\begin{align}
		2\pi R Q_\tau^{\rm disk}(R)&=2N+\mathcal O(1),
		\label{eq:appendix-disk-boundary-value}\\
		2\pi R Q_\tau^{\rm disk}(R(1+t/N))
		&=1+2\sum_{n=1}^{N}\left(1+\frac{t}{N}\right)^{2n}
		+O_T(\log N),
		\label{eq:appendix-disk-regularized-sum}
	\end{align}
	uniformly for $0\le t\le T$. Moreover,
	\begin{equation}
		L_\tau=2N\log\frac{2N}{\mathrm e kR}+\mathcal O(\log N),
		\qquad N\sim\frac{L_\tau}{2\log L_\tau}.
		\label{eq:appendix-disk-effective-order}
	\end{equation}
\end{lemma}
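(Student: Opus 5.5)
The proof rests on the uniform Debye-type asymptotics of Bessel functions at fixed argument and large order,
\[
J_n(x)=\frac{1}{\sqrt{2\pi n}}\Bigl(\frac{\mathrm e x}{2n}\Bigr)^{n}\bigl(1+\mathcal O(1/n)\bigr),\qquad n\to\infty,
\]
uniformly for $x$ in compact subsets of $(0,\infty)$. We combine this with $J_{-n}^2=J_n^2$ and with the nonresonance condition $J_n(kR)\ne0$, which keeps the finitely many low-order terms harmless.

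\emph{Effective order.} First, $\lambda_n=4\pi^2RJ_n(kR)^2$ is eventually strictly decreasing in $n$ and tends to zero superexponentially. Hence $N=N_\tau\to\infty$, and for large $\tau^{-1}$ we have $\lambda_{N+1}<\tau\le\lambda_N$. Taking logarithms of the asymptotics gives
\[
\log(1/\lambda_n)=2n\log\frac{2n}{\mathrm e kR}+\mathcal O(\log n).
\]
The ratio $\lambda_N/\lambda_{N+1}$ behaves like $(2N/\mathrm e kR)^2\mathrm e^{2}$, so $\log(\lambda_N/\lambda_{N+1})=\mathcal O(\log N)$. Since $\|A\|^2$ is a fixed constant, this yields $L_\tau=\log(1/\lambda_N)+\mathcal O(\log N)$, which is the first relation in \eqref{eq:appendix-disk-effective-order}. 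Inverting $L\approx 2N\log N$ gives $\log L_\tau=\log N+\mathcal O(\log\log N)$, and therefore $N\sim L_\tau/(2\log L_\tau)$.

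\emph{Boundary value.} At $r=R$ each summand in \eqref{eq:disk-exact-Q} is $p_{n,\tau}\in(0,1)$, so $2\pi RQ^{\rm disk}_\tau(R)=\sum_n p_{n,\tau}$. Set $m=|n|-N$. For $|n|\le N$ we have $1-p_{n,\tau}=\tau/(\lambda_n+\tau)\le\tau/\lambda_n\le\lambda_N/\lambda_n$. For $|n|>N$ we have $p_{n,\tau}\le\lambda_n/\tau\le\lambda_n/\lambda_{N+1}$. Near $N$ the consecutive ratios $\lambda_{n}/\lambda_{n+1}\gtrsim (2N/\mathrm ekR)^2$ are huge, so both tails are geometric in $m$ with ratio $\mathcal O(N^{-2})$, uniformly for $|m|\le N/2$. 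For $|n|\le N/2$ the factor $\lambda_N/\lambda_n$ is superexponentially small, and for $|n|\ge 2N$ the Bessel asymptotics bound the terms directly. Summing shows $\sum_n p_{n,\tau}=(2N+1)+\mathcal O(1)$.

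\emph{Profile near the boundary.} For $r=R(1+t/N)$ with $0\le t\le T$, the uniform asymptotics give
\[
\frac{J_n(kr)^2}{J_n(kR)^2}=\Bigl(1+\frac tN\Bigr)^{2n}\bigl(1+\mathcal O_T(1/n)\bigr)
\]
for large $n$, and these ratios are bounded by $e^{2T}$ for all $n\le 2N$. We then split the series into three ranges.
\begin{itemize}
\item For $|n|\le N$, replacing $p_{n,\tau}$ by $1$ costs at most $e^{2T}\sum_{|n|\le N}(1-p_{n,\tau})=\mathcal O_T(1)$, by the geometric tail estimate above.
\item The relative error $\mathcal O(1/n)$ in the ratios, summed over $n\le N$ with the bounded factor $(1+t/N)^{2n}\le e^{2T}$, produces $\mathcal O_T(\sum_{n\le N}1/n)=\mathcal O_T(\log N)$. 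This is exactly the stated error term. The finitely many small $n$ where the asymptotics have not yet kicked in contribute $\mathcal O_T(1)$.
\item For $|n|>N$, the terms $p_{n,\tau}(r/R)^{2n}$ carry the extra factor $e^{2T}$ times the geometric decay $\mathcal O(N^{-2m})$, so they sum to $\mathcal O_T(1)$. The far range $n\ge 2N$ is again superexponentially small.
\end{itemize}
Adding the two signs of $n$ gives $1+2\sum_{n=1}^N(1+t/N)^{2n}+\mathcal O_T(\log N)$.

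\emph{Main obstacle.} The main difficulty is obtaining the Bessel ratio estimate and the tail bounds \emph{uniformly} in $n$ up to and beyond $N$, as $r$ varies with $N$. I would use the Debye expansion with explicit error, $J_n(x)=\frac{(x/2)^n}{n!}\bigl(1+\mathcal O(x^2/n)\bigr)$, which follows from the power series and is uniform for $x\le kR(1+T)$. Combined with Stirling's formula, this gives both the ratio formula and the monotonicity and ratio bounds on $\lambda_n$ in one stroke.
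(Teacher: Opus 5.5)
Your proposal is correct and follows essentially the same route as the paper: the leading power-series term $J_n(x)=\frac{(x/2)^n}{n!}(1+\mathcal O(1/n))$ combined with Stirling, geometric control of $1-p_{n,\tau}$ for $n\le N$ and of $p_{n,\tau}$ for $n>N$ via eigenvalue ratios, summing the $\mathcal O(1/n)$ relative error in the Bessel ratios to get $O_T(\log N)$, and the sandwich $b_N\le L_\tau<b_{N+1}$ with $b_{N+1}-b_N=\mathcal O(\log N)$. The remaining slips are cosmetic: for $n\le 2N$ the ratio bound should be of order $e^{4T}$ rather than $e^{2T}$, and the paper's single estimate $\lambda_{n+1}/\lambda_n\le 1/2$ for all $n\ge n_0$ covers every range at once, which makes your separate splits at $|m|\le N/2$ and $|n|\ge 2N$ unnecessary (and closes the unaddressed range $3N/2<|n|<2N$).
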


\begin{proof}
	\emph{Bessel estimates.}
	Set $x=kR$. The power series for $J_n$ gives, uniformly for $y$ in a
	fixed compact subset of $(0,\infty)$,
	\[
	J_n(y)=\frac{(y/2)^n}{n!}\bigl(1+\mathcal O(n^{-1})\bigr),
	\]
    thus $\lambda_{n+1}/\lambda_n=\mathcal O(n^{-2})$.
	Choose $n_0$ so that this ratio is at most $1/2$ for $n\ge n_0$.
	Set
\[
R_{n,N}(t):=
\left|
\frac{J_n(x(1+t/N))}{J_n(x)}
\right|^2,
\qquad
a_{n,N}(t):=
\left(1+\frac{t}{N}\right)^{2n}.
\]
Consequently, for sufficiently large $n$, uniformly for  \(N\ge1\) and $ 0\le t\le T$, 
    \begin{align}
        \label{conne}
        R_{n,N}(t)
	=a_{n,N}(t)\bigl(1+O_T(n^{-1})\bigr).
    \end{align}

    For $n_0\le n\le N$ and $m\ge1$, since $\lambda_n\ge\tau$ 
	whereas $\lambda_{N+1}<\tau$, the eigenvalue ratios give
	\[
	1-p_{n,\tau}\le\frac{\tau}{\lambda_n}
	\le\frac{\lambda_N}{\lambda_n}\le2^{-(N-n)},
	\qquad
	p_{N+m,\tau}\le\frac{\lambda_{N+m}}{\tau}\le2^{-(m-1)}.
	\]
	Thus $\sum_{n=0}^N(1-p_{n,\tau})=\mathcal O(1)$ and
	$\sum_{n>N}p_{n,\tau}=\mathcal O(1)$. Since $p_{-n,\tau}=p_{n,\tau}$,
    \[2\pi R Q_\tau^{\rm disk}(R)=\sum_{n\in\mathbb{Z}}p_{n,\tau}=2N+\mathcal O(1),\]
which proves \eqref{eq:appendix-disk-boundary-value}. By symmetry,
\begin{align*}
    2\pi R Q_\tau^{\rm disk}(R(1+t/N))
=&p_{0,\tau}R_{0,N}(t)
+2\sum_{n=1}^{N}p_{n,\tau}R_{n,N}(t)
+2\sum_{n>N}p_{n,\tau}R_{n,N}(t)\\
=&p_{0,\tau}R_{0,N}(t)
-2\sum_{n=1}^{N}(1-p_{n,\tau})R_{n,N}(t)+2\sum_{n=1}^{N}
(R_{n,N}(t)-a_{n,N}(t))\\&+2\sum_{n=1}^{N}
a_{n,N}(t)
+2\sum_{n>N}p_{n,\tau}R_{n,N}(t)
\end{align*}
Since
\(a_{n,N}(t)\le e^{2T}\), it follows from \eqref{conne} that
\(R_{n,N}(t)\le C_T\) for \(n\le N\). Hence
\[
\sum_{n=1}^{N}
(1-p_{n,\tau})R_{n,N}(t)
\le
C_T\sum_{n=1}^{N}(1-p_{n,\tau})
=
O_T(1),
\]
whereas
\[
\sum_{n=n_0}^{N}
|R_{n,N}(t)-a_{n,N}(t)|
\le
C_T\sum_{n=n_0}^{N}\frac1n
=
O_T(\log N).
\]
The finitely many modes \(0\le n<n_0\) contribute \(O_T(1)\).
	For sufficiently large \(N\), the tail is bounded uniformly
for \(0\le t\le T\) by
	\[
	\sum_{n>N}p_{n,\tau}
	R_{n,N}(t)
	\le C_T\sum_{m=1}^{\infty}2^{-(m-1)}e^{2Tm/N}
	=O_T(1).
	\]
	Combining these estimates gives
\[
2\pi R Q_\tau^{\rm disk}(R(1+t/N))
=
1+2\sum_{n=1}^{N}
\left(1+\frac{t}{N}\right)^{2n}
+O_T(\log N),
\]
uniformly for \(0\le t\le T\), which proves \eqref{eq:appendix-disk-regularized-sum}.

\emph{Effective angular order.} The large-order Bessel expansion and
Stirling's formula yield
\[
b_n:=\log\frac{\|A\|^2}{\lambda_n}
=
2n\log\frac{2n}{\mathrm e kR}+\mathcal O(\log n).
\]
The inequalities \(\lambda_N\ge\tau>\lambda_{N+1}\) give
\(b_N\le L_\tau<b_{N+1}\), while
\[
\begin{aligned}
b_{N+1}-b_N
&=
2\log\frac{2(N+1)}{\mathrm e kR}
+
2N\log\left(1+\frac1N\right)
+\mathcal O(\log N)
=\mathcal O(\log N),
\end{aligned}
\]
since \(0\le N\log(1+1/N)\le1\).
Consequently,
\[
\begin{aligned}
L_\tau
&=b_N+\mathcal O(\log N)
=2N\log\frac{2N}{\mathrm e kR}+\mathcal O(\log N)
=2N\log N\bigl(1+o(1)\bigr).
\end{aligned}
\]
Taking logarithms gives
\(
\log L_\tau
=
\log N+\log\log N+\log 2+o(1),
\)
so \(\log L_\tau/\log N\to1\). Therefore,
\[
\frac{N}{L_\tau/(2\log L_\tau)}
=
\frac{2N\log N}{L_\tau}
\frac{\log L_\tau}{\log N}
\longrightarrow1,
\]
proving \(N_\tau\sim L_\tau/(2\log L_\tau)\).
\end{proof}

	\section{Sharp Analytic-Boundary Asymptotics}
\label{app:analytic-radiation-estimates}

This appendix proves Theorem~\ref{thm:analytic-boundary-resolution}
for the full regularized statistic.
We work with the exact normalized reference operator
\eqref{eq:normalized-ideal-radiation}, at fixed $k>0$ satisfying the
nonresonance condition, on a fixed bounded simply connected domain with regular
analytic Jordan boundary. 

Put 
$g=\log|\Phi|$ and $h_\Gamma=|\Phi'|^{-1}$.
Retain $J_\tau$, $\rho_\tau=\tau/\|A\|^2$,
$L_\tau=\log(1/\rho_\tau)$, $G$, and $t_{1/2}$ from the body.
In this appendix
\[
 (\cH f)(z)=\int_{\mathbb S^1}e^{ik\hat x\cdot z}f(\hat x)\,ds(\hat x),
 \qquad A^*f=(\cH f)|_\Gamma,\qquad
 \cH f(z)=\langle f,\phi_z\rangle_H.
\]
Constants may depend on the fixed geometry, $k$, compact sets,
collars, and fixed comparison parameters, but never on $M,j,J_\tau$
or $\tau$.
The proof is organized as follows. Section \ref{sec:ana-spectrum} establishes the effective spectral count and Fourier truncation estimates, and introduces the finite-dimensional Fourier--Bessel spaces. Section \ref{sxeg} develops the Szeg\H{o} polynomial asymptotics and Vekua estimates needed to construct a Helmholtz basis. Section \ref{gram} proves uniform stability of this basis through its Gram operator. Section \ref{aker} derives the scaled boundary-layer profile of the associated reproducing kernel. Finally, Section \ref{sec:ana-tikhonov} uses a variational comparison with slightly smaller and larger Fourier cutoffs to transfer this profile to the full Tikhonov statistic, thus yielding the half-height width of the boundary transition in Section \ref{theoan}.

\subsection{Singular values and Fourier truncation}
	\label{sec:ana-spectrum}
	
	Translate an interior point of $D$ to the origin before defining the angular
projection. This multiplies $A^*$ by the
	unitary multiplication operator $f\mapsto e^{ik\hat x\cdot z_0}f$ on
	$L^2(\mathbb S^1)$. It therefore changes neither the singular values
	nor $Q_\tau$ (in translated coordinates). From now on
	$0\in D$. Choose radii $r_1<r_2$ with
	\[
	\overline{B_{r_1}(0)}\subset D,\qquad \overline D\subset B_{r_2}(0),
	\qquad J_n(kr_1)J_n(kr_2)\ne0\quad(n\ge0);
	\]
	this is possible because the excluded radii form a countable set. For
	$\varrho>0$ let $A_\varrho$ be the operator \eqref{eq:normalized-ideal-radiation} with
	$\Gamma$ replaced by the circle $|y|=\varrho$. Every Herglotz wave
	function has the Fourier--Bessel expansion
	\begin{equation}
		\HH f(|z|e^{i\theta})
		=\sum_{n\in\mathbb Z}d_nJ_{|n|}(k|z|)e^{in\theta},
		\qquad
		\sum_{n\in\mathbb Z}|d_n|^2=2\pi\norm f_{L^2(\mathbb S^1)}^2,
		\label{eq:fb-expansion}
	\end{equation}
	where $d_n$ is $2\pi$ times the $n$th Fourier coefficient of $f$, up to
	a unimodular factor.
	
	\begin{lemma}[Superexponential decay of singular values]
		\label{lem:spectrum}
		The operator $A$ has infinitely many positive singular values, and
		as $j\to\infty$,
		\begin{equation}
			\log\sigma_j=-\frac j2\log j+\mathcal O(j).
			\label{eq:spectrum}
		\end{equation}
		Consequently
		\begin{equation}
			L_\tau=J_\tau\log J_\tau+\mathcal O(J_\tau),\qquad
			J_\tau\sim L_\tau/\log L_\tau\qquad(\tau\downarrow0).
			\label{eq:count-law}
		\end{equation}
	\end{lemma}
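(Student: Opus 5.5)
The plan is to squeeze $A$ between the two circle operators $A_{r_1}$ and $A_{r_2}$, whose singular values are explicit, using Courant--Fischer min--max arguments on $AA^*$. For the circle of radius $\varrho$, the Fourier modes diagonalize $A_\varrho A_\varrho^*$. The eigenvalues are $4\pi^2\varrho J_n(k\varrho)^2$, $n\in\mathbb Z$, and each $n\neq0$ occurs twice. Since $J_n(y)=(y/2)^n/n!\,(1+O(1/n))$ and Stirling's formula applies, the $j$-th singular value of $A_\varrho$ (corresponding to $|n|\approx j/2$) satisfies $\log\sigma_j(A_\varrho)=-\tfrac j2\log j+O(j)$. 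The constant in the $O(j)$ term depends on $\varrho$ through $\log(k\varrho)$.

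\textbf{Upper bound.} Write $\sigma_j(A)^2=\sigma_j(A^*)^2$ and $\|A^*f\|_{L^2(\Gamma)}^2=\int_\Gamma|\HH f|^2\,ds$. By \eqref{eq:fb-expansion}, $|\HH f(z)|\le\sum_n|d_n||J_{|n|}(k|z|)|$ on $\Gamma\subset B_{r_2}$. Let $f$ be orthogonal to the $2M+1$ Fourier modes with $|n|\le M$. The bound $|J_n(y)|\le (y/2)^n/n!$ for $0\le y\le kr_2$ and Cauchy--Schwarz then give
\[
\|A^*f\|^2\le C|\Gamma|\,\|f\|^2\sum_{|n|>M}\Bigl(\tfrac{(kr_2/2)^{|n|}}{|n|!}\Bigr)^2\le C'\|f\|^2\Bigl(\tfrac{(kr_2/2)^{M}}{M!}\Bigr)^2 .
\]
By min--max, $\sigma_{2M+2}(A)\le C (kr_2/2)^M/M!$, so $\log\sigma_j\le-\tfrac j2\log j+O(j)$.

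\textbf{Lower bound.} This is the harder half, since $\Gamma$ is not a circle; it is the main obstacle. I use $r_1$ and Green's representation, i.e.\ interior stability of Herglotz functions. The Helmholtz function $u=\HH f$ in $D$ is determined by its Dirichlet trace on $\Gamma$, because $k^2$ is not a Dirichlet eigenvalue. Interior elliptic estimates plus well-posedness give $\|u\|_{L^2(|z|=r_1)}\le C\|u\|_{H^{-1}(\Gamma)}\le C\|u\|_{L^2(\Gamma)}$. Hence $\|A^*f\|\ge c\|A_{r_1}^*f\|$ for all $f$, and min--max yields $\sigma_j(A)\ge c\,\sigma_j(A_{r_1})$. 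The assumption $J_n(kr_1)\neq0$ for all $n$ is needed only so that the circle singular values are all positive and monotone in the tail. Together with the upper bound this proves \eqref{eq:spectrum}. It also shows that infinitely many $\sigma_j$ are positive, consistent with the injectivity of $A$. The point to verify carefully is the uniform interior estimate. If the $H^{-1}$ form is delicate, I would instead represent $u$ in $B_{r_1}$ by Green's formula with a Dirichlet Green function of $D$ smoothed to a slightly larger interior curve.

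\textbf{Counting law.} Set $b_j=-2\log\sigma_j+2\log\|A\|=j\log j+O(j)$, so that $J_\tau=\#\{j:b_j\le L_\tau\}$. The sequence $b_j$ is nondecreasing with $b_{j+1}-b_j$ possibly large, but both are controlled by the two-sided bound. From $b_{J_\tau}\le L_\tau<b_{J_\tau+1}$ we get $L_\tau=J\log J+O(J)$ with $J=J_\tau$; the shift $J\to J+1$ changes $J\log J$ only by $O(\log J)\subset O(J)$. Taking logarithms, $\log L_\tau=\log J+\log\log J+O(1)$, so $\log L_\tau/\log J\to1$ and $J_\tau\sim L_\tau/\log L_\tau$, exactly as in the disk computation of Lemma~\ref{lem:disk-regularized-estimates}.
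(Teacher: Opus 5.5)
Your proposal is correct and matches the paper's proof in substance: the lower bound $\sigma_j(A)\ge c\,\sigma_j(A_{r_1})$ is the paper's factorization $A_{r_1}^*=Y_1A^*$ through the interior Dirichlet problem in $D$. Your min--max formulation is equivalent to the paper's $\sigma_j(ST)\le\|S\|\sigma_j(T)$, and the $H^{-1}(\Gamma)$ step is unnecessary, since boundedness from $L^2(\Gamma)$ data to the inner circle suffices. The counting argument via $b_{J_\tau}\le L_\tau<b_{J_\tau+1}$ is also the same. The only variation is the upper bound: you estimate the Fourier--Bessel tail directly, as the paper itself does in Lemma~\ref{lem:fourier-estimates}, rather than writing $A^*=Y_2A_{r_2}^*$ through the Dirichlet problem on $B_{r_2}$. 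This spares you the condition $J_n(kr_2)\neq0$.
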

	
	\begin{proof}
		Let $Y_1:L^2(\Gamma)\to L^2(\{|y|=r_1\})$ be the solution operator of
		the interior Dirichlet problem in $D$ followed by restriction to the
		inner circle. Let $Y_2:L^2(\{|y|=r_2\})\to L^2(\Gamma)$ be the
		analogous operator for $B_{r_2}(0)$ followed by restriction to
		$\Gamma$. Both are bounded, since each target curve lies strictly
		inside the corresponding domain and the Poisson kernel is smooth on the
		product of the two curves. Every Herglotz wave function solves both
		Dirichlet problems, so
		\[
		A_{r_1}^*=Y_1A^*,\qquad A^*=Y_2A_{r_2}^*,
		\]
		and the inequality $\sigma_j(ST)\le\norm S\sigma_j(T)$ gives
		\[
		\norm{Y_1}^{-1}\sigma_j(A_{r_1})\le\sigma_j(A)\le\norm{Y_2}\sigma_j(A_{r_2}).
		\]
		By \eqref{eq:fb-expansion}, $A_\varrho^*$ is diagonal in the Fourier
		basis, and its singular values are $2\pi\sqrt\varrho\,|J_n(k\varrho)|$,
		$n\in\mathbb Z$. Our choice of $r_1$ makes all these values positive
		for $\varrho=r_1$, so the lower bound above proves that $A$ has
		infinitely many positive singular values. The power series
		\cite[Eq.~10.2.2]{DLMF} and Stirling's formula give
		\[
		J_n(k\varrho)=\frac{(k\varrho/2)^n}{n!}(1+\mathcal O(n^{-1})),\qquad
		\log\bigl(2\pi\sqrt\varrho\,|J_n(k\varrho)|\bigr)=-n\log n+\mathcal O(n).
		\]
		These values are eventually strictly decreasing in $n$. Each order
		$n\ge1$ occurs twice ($\pm n$), so the $j$th largest corresponds to
		$n=j/2+\mathcal O(1)$. This yields \eqref{eq:spectrum}.
		
		Put $\ell_j=-\log(\sigma_j^2/\norm A^2)$, a nondecreasing sequence with
		$\ell_j=j\log j+\mathcal O(j)$. Every fixed $\sigma_j$ eventually exceeds
		$\sqrt\tau$, so $J_\tau\to\infty$. By definition of $J_\tau$,
		$\ell_{J_\tau}\le L_\tau<\ell_{J_\tau+1}$, which gives the first
		relation in \eqref{eq:count-law}. Taking logarithms gives
		$\log L_\tau=\log J_\tau+\mathcal O(\log\log J_\tau)$, hence the second.
	\end{proof}
	
	Let $E_M$ be the orthogonal projection in $L^2(\mathbb S^1)$ onto the
	Fourier modes $e^{in\theta}$, $|n|\le M$. Denote
	\[
	\BH_M=\{\HH f:f\in\operatorname{Ran}E_M\}
	=\operatorname{span}\{J_{|n|}(k|z|)e^{in\arg z}:|n|\le M\},
	\]
	the space of generalized harmonic polynomials of degree $M$, regarded
	as entire solutions of $(\Delta+k^2)u=0$. Since $k^2$ is not a
	Dirichlet eigenvalue, $\norm\cdot_{L^2(\Gamma)}$ is a norm on $\BH_M$.
	
	\begin{lemma}[Fourier truncation estimates]
		\label{lem:fourier-estimates}
		For every fixed compact set $\mathcal K\subset\mathbb R^2$ and $M\ge2$, put
		$\delta_M=Ce^{-M\log M+CM}$ and $\Theta_M=Ce^{M\log M+CM}$. Then
		\begin{align}
			\norm{\HH(\Id-E_M)f}_{C(\mathcal K)}
			+\norm{\HH(\Id-E_M)f}_{L^2(\Gamma)}
			&\le \delta_M\norm f,
			\label{eq:angular-tail}\\
			\norm f&\le \Theta_M\norm{\HH f}_{L^2(\Gamma)}
			\quad(f\in\operatorname{Ran}E_M).
			\label{eq:density-bound}
		\end{align}
	\end{lemma}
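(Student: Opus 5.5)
Both estimates will be read off the Fourier--Bessel expansion \eqref{eq:fb-expansion}: \eqref{eq:angular-tail} from pointwise Bessel bounds for large orders, and \eqref{eq:density-bound} by comparing $\Gamma$ with the inner circle $|y|=r_1$ through the operator $Y_1$ from the proof of Lemma~\ref{lem:spectrum}.

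\emph{Tail estimate.} Fix $\varrho_0$ with $\mathcal K\cup\Gamma\subset \overline{B_{\varrho_0}(0)}$. For $f\in L^2(\mathbb S^1)$, \eqref{eq:fb-expansion} gives
\[
\HH(\Id-E_M)f(z)=\sum_{|n|>M}d_nJ_{|n|}(k|z|)e^{in\arg z},\qquad \sum|d_n|^2=2\pi\norm f^2 .
\]
I will use the elementary bound $|J_n(y)|\le (y/2)^n/n!$ for $y\ge0$, which follows from the Poisson integral or the power series. Cauchy--Schwarz then gives, uniformly for $|z|\le\varrho_0$,
\[
|\HH(\Id-E_M)f(z)|\le \sqrt{2\pi}\,\norm f\Bigl(\sum_{n>M}2\Bigl(\frac{(k\varrho_0/2)^n}{n!}\Bigr)^2\Bigr)^{1/2}.
\]
The series is dominated by its first term up to a geometric factor once $M\ge k\varrho_0$; for the finitely many smaller $M\ge2$ I absorb the bound into $C$. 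Stirling then bounds it by $Ce^{-M\log M+CM}$. The same sup bound on $\Gamma$, multiplied by $|\Gamma|^{1/2}$, controls the $L^2(\Gamma)$ norm, which proves \eqref{eq:angular-tail}.

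\emph{Inverse bound.} Let $f\in\operatorname{Ran}E_M$. Since $\HH f$ is an entire Helmholtz solution and $k^2$ is not a Dirichlet eigenvalue of $D$, it coincides with the Dirichlet solution in $D$ with its own boundary data. Hence $\HH f|_{|y|=r_1}=Y_1(\HH f|_\Gamma)$, and by boundedness of $Y_1$,
\[
\norm{\HH f}_{L^2(|y|=r_1)}\le\norm{Y_1}\norm{\HH f}_{L^2(\Gamma)}.
\]
On the circle the modes are orthogonal:
\[
\norm{\HH f}^2_{L^2(|y|=r_1)}=2\pi r_1\sum_{|n|\le M}|d_n|^2J_{|n|}(kr_1)^2\ge 2\pi r_1\min_{|n|\le M}J_{|n|}(kr_1)^2\cdot 2\pi\norm f^2 .
\]
By the choice of $r_1$, $J_n(kr_1)\neq0$ for all $n$. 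The power series asymptotics $J_n(kr_1)=\frac{(kr_1/2)^n}{n!}(1+\mathcal O(n^{-1}))$ together with Stirling give $\min_{n\le M}|J_n(kr_1)|\ge c\,e^{-M\log M-CM}$, where the finitely many small $n$ are handled by positivity. Inverting this yields $\norm f\le\Theta_M\norm{\HH f}_{L^2(\Gamma)}$ with $\Theta_M=Ce^{M\log M+CM}$.

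The only point needing care is the uniform lower bound on $|J_n(kr_1)|$ for all $n\le M$. It holds because the minimum is eventually attained at $n=M$, since the values decrease monotonically for large $n$, while the first $n_0$ values are fixed nonzero constants. Everything else is routine bookkeeping of constants depending only on $k$, $r_1$, $\varrho_0$, $\Gamma$ and $\mathcal K$, none of which depend on $M$.
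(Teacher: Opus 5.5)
Your proposal is correct and follows the paper's proof. The tail bound comes from the factorial decay of $J_m(k|z|)$ applied to the Fourier--Bessel series; you use Cauchy--Schwarz with $\sum|d_n|^2=2\pi\norm f^2$, whereas the paper uses $|d_n|\le\sqrt{2\pi}\norm f$ and sums the tail, which is immaterial. The inverse bound comes from passing to the inner circle $|y|=r_1$, applying Parseval, and using $\min_{n\le M}|J_n(kr_1)|\ge c\,e^{-M\log M-CM}$; your explicit appeal to $Y_1$ and nonresonance just spells out what the paper compresses into ``restriction to the inner circle.''
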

	
	\begin{proof}
		The Bessel series gives
		$\sup_{z\in\mathcal K\cup\Gamma}|J_m(k|z|)|\le C R^m/m!$ for a fixed $R$.
		Since $|d_n|\le\sqrt{2\pi}\norm f$ in \eqref{eq:fb-expansion},
		summing the factorial tail proves \eqref{eq:angular-tail}.
		
		For $f\in\operatorname{Ran}E_M$, restriction to the inner circle and
		Parseval's identity give
		\[
		\norm{\HH f}_{L^2(\Gamma)}^2
		\ge c\sum_{|n|\le M}|d_n|^2|J_{|n|}(kr_1)|^2
		\ge c\min_{0\le n\le M}|J_n(kr_1)|^2\norm f^2.
		\]
		The nonzero initial Bessel values and their factorial asymptotics
		imply $\min_{0\le n\le M}|J_n(kr_1)|\ge c e^{-M\log M-CM}$.
		This proves \eqref{eq:density-bound}.
	\end{proof}

	\subsection{{Szeg\H{o}} polynomials and the Vekua operator}
	\label{sxeg}
	Let $p_m$ ($m\ge0$) be the Szeg\H{o} polynomials of $\Gamma$, i.e. the
	polynomials orthonormal with respect to $ds$ on $\Gamma$, of degree
	$m$ and with positive leading coefficient; thus $p_0=|\Gamma|^{-1/2}$.
	Since $\Gamma$ is analytic, Szeg\H{o}'s strong asymptotics (normalized
	for $ds$ rather than $ds/2\pi$) read
	\begin{equation}
		p_m(z)=\sqrt{\frac{\Phi'(z)}{2\pi}}\;\Phi(z)^m(1+o(1)),
		\label{eq:szego}
	\end{equation}
	uniformly on compact subsets of an annular neighbourhood of $\Gamma$
	(see \cite{MinaDiaz2009}). The branch of the square root is positive at infinity.
	Fix $\kappa>0$ so small that the level curve
	$\Gamma_\kappa=\{g=-\kappa\}$ lies in this neighbourhood and encloses
	$0$, and let $D_\kappa$ be its interior. By \eqref{eq:szego} and the
	maximum modulus principle,
	\begin{equation}
		\sup_{\overline{D_\kappa}}|p_m|\le Ce^{-\kappa m},
		\qquad |p_m(z)|\le C e^{mg(z)}\quad\text{for } -\kappa\le g(z)\le g_0,
		\label{eq:polynomial-bounds}
	\end{equation}
	for a fixed $g_0>0$.
	
	Let $E^2(D)$ denote the Smirnov space, i.e. the closure of the
	polynomials in $L^2(\Gamma)$. Its elements are identified with their
	holomorphic extensions to $D$. Define the Vekua operator $\VV_k^+$ on
	polynomials by
	\begin{equation}
		\VV_k^+(z^m)
		=m!\,(2/k)^m J_m(k|z|)e^{im\arg z},\qquad m\ge0,
		\label{eq:monomial-transform}
	\end{equation}
	extended linearly (the value at $z=0$ is taken by continuity), and set
	$\VV_k^-q=\overline{\VV_k^+(\overline q)}$ for antiholomorphic $q$. In particular $\VV_k^+(1) =\VV_k^-(1)= J_0(k|z|)$.
	On polynomials this is the holomorphic part of Vekua's operator for
	the Helmholtz equation centred at $0$ \cite[Sec.~4]{MoiolaHiptmairPerugia2011}.
	
	\begin{lemma}[Properties of the Vekua operator]
		\label{lem:transform-properties}
		\leavevmode
		\begin{enumerate}
			\item[(i)] For $f\in E^2(D)$ and $z\in D$,
			\begin{equation}
				\VV_k^+f(z)-f(z)
				=-\lambda\overline z\int_0^z f(\zeta)
				\Lambda(\lambda\overline z(z-\zeta))\,d\zeta,
				\qquad
				\lambda=\frac{k^2}{4},\quad
				\Lambda(u)=\sum_{\ell\ge0}\frac{(-u)^\ell}{\ell!(\ell+1)!},
				\label{eq:transform-integral}
			\end{equation}
			with the integral independent of the path in $D$. Both $\VV_k^+f$ and
			$\VV_k^-q$, for $q\in\overline{E^2(D)}$, solve $(\Delta+k^2)u=0$. For
			polynomials the formula holds in all of $\mathbb C$, and
			\[
			\BH_M=\{\VV_k^+F+\VV_k^-q:\ F,\overline q
			\text{ polynomials of degree}\le M\}.
			\]
			
			\item[(ii)] For $f\in E^2(D)$,
			\begin{equation}
				\norm{(\VV_k^+-\Id)f}_{H^1(\Gamma)}
				+\norm{(\VV_k^+-\Id)f}_{H^1(D)}
				\le C\norm f_{L^2(\Gamma)}.
				\label{eq:transform-regularity}
			\end{equation}
			The same bound holds for $\VV_k^-$ on $\overline{E^2(D)}$.
			
			\item[(iii)] If $F\in E^2(D)$, $q\in\overline{E^2(D)}$ and
			$\VV_k^+F+\VV_k^-q=0$ in $D$, then $F\equiv c$ and $q\equiv-c$ for a
			constant $c$.
			
			\item[(iv)] For $m\ge1$ and $0\le g(z)\le g_0$,
			\begin{equation}
				|\VV_k^+p_m(z)-p_m(z)|\le \frac{C}{m}e^{mg(z)},
				\label{eq:transform-error}
			\end{equation}
			and likewise for $\VV_k^-\overline{p_m}-\overline{p_m}$.
		\end{enumerate}
	\end{lemma}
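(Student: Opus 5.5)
The four properties come from three tools: the monomial identities \eqref{eq:monomial-transform}, expansion in Fourier--Bessel modes about $0$, and, for the quantitative estimates (ii) and (iv), integration by parts along explicit paths in $D$ or in the conformal collar.

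\emph{Part (i).} First verify \eqref{eq:transform-integral} for $f(z)=z^m$. Parametrize $\zeta=tz$, $0\le t\le1$, and expand $\Lambda$ termwise. The right-hand side becomes $z^m\sum_{\ell\ge1}c_{m,\ell}(\lambda|z|^2)^\ell$, whose coefficients are Beta integrals $\int_0^1 t^m(1-t)^{\ell-1}\,dt$. Comparing with the power series of $m!(2/k)^mJ_m(k|z|)e^{im\arg z}$ identifies the coefficients. Linearity gives the identity for polynomials on all of $\mathbb C$, and path independence holds because the integrand is holomorphic in $\zeta$.

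For $f\in E^2(D)$, take polynomials $f_n\to f$ in $L^2(\Gamma)$. Then $f_n\to f$ locally uniformly in $D$ (Cauchy formula), and the right-hand side converges locally uniformly. This defines $\VV_k^+f$ and shows that it solves the Helmholtz equation, since each monomial image $J_m(k|z|)e^{im\theta}$ does and local uniform limits of Helmholtz solutions are solutions. $\VV_k^-$ is handled by conjugation. The description of $\BH_M$ then follows: the images of $z^m$ and of $\overline z^{\,m}$, $0\le m\le M$, are nonzero multiples of $J_{|n|}e^{in\theta}$, $|n|\le M$, with $n=0$ produced twice.

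\emph{Part (iii).} On a small disk about $0$, expand $F=\sum_{m\ge0} a_mz^m$ and $\overline q=\sum_{m\ge0} b_m z^m$. By (i) and local uniform convergence, $\VV_k^+F+\VV_k^-q$ has the Fourier--Bessel expansion
\[
\sum_{m\ge1}m!(2/k)^m\bigl(a_mJ_m e^{im\theta}+\overline{b_m}J_me^{-im\theta}\bigr)+(a_0+\overline{b_0})J_0 .
\]
These modes are linearly independent: on a small circle where $J_m(kr)\ne0$, take Fourier coefficients. Hence $a_m=b_m=0$ for $m\ge1$ and $a_0=-\overline{b_0}$. Analytic continuation then gives $F\equiv c$ and $q\equiv -c$.

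\emph{Part (ii), the main obstacle.} The difficulty is that $f$ has only $L^2$ boundary values, while the integral runs up to $z\in\Gamma$. I would write $f=\mathcal F'$ with $\mathcal F(\zeta)=\int_0^\zeta f$. Since $f\in E^2(D)$, $\mathcal F$ is continuous on $\overline D$, with
\[
\norm{\mathcal F}_{C(\overline D)}+\norm{\partial_s\mathcal F}_{L^2(\Gamma)}\le C\norm f_{L^2(\Gamma)},
\]
which I would justify from the Smirnov/Hardy theory on analytic domains by mapping conformally to the disk. Integrating by parts,
\[
(\VV_k^+-\Id)f(z)=-\lambda\overline z\Bigl[\mathcal F(z)\Lambda(0)-\int_0^z\mathcal F(\zeta)\,\partial_\zeta\bigl[\Lambda(\lambda\overline z(z-\zeta))\bigr]\,d\zeta\Bigr].
\]
This is a smooth function of $z$ times $\mathcal F(z)$, plus an integral of $\mathcal F$ against a kernel that is smooth in $(z,\zeta)$. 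Its tangential derivative on $\Gamma$ and its gradient in $D$ are therefore controlled by $\norm{\mathcal F}_\infty+\norm{\mathcal F'}_{L^2}$, which gives \eqref{eq:transform-regularity}. The same argument applies to $\VV_k^-$.

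\emph{Part (iv).} The aim is $|\int_0^z p_m\Lambda\,d\zeta|\le Ce^{mg(z)}/m$, and the gain $1/m$ again comes from integration by parts.
\begin{enumerate}
\item \emph{Path.} Run from $0$ to a point of $\Gamma_\kappa$, then through the collar to $z$. On the first segment $|p_m|\le Ce^{-\kappa m}$ by \eqref{eq:polynomial-bounds}, which is negligible.
\item \emph{Conformal variables.} In the collar write $w=\Phi(\zeta)$ and $p_m(\Phi^{-1}(w))=w^mh_m(w)$. The functions $h_m$ are holomorphic and uniformly bounded on a fixed annulus slightly larger than $e^{-\kappa}\le|w|\le e^{g_0}$; this follows from \eqref{eq:szego}, or from \eqref{eq:polynomial-bounds} applied on slightly larger level curves.
\item \emph{Integration by parts.} Write $w^m\,dw=d\bigl(w^{m+1}\bigr)/(m+1)$ and move the derivative onto $h_m(w)(\Phi^{-1})'(w)\Lambda(\cdot)$. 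Cauchy estimates bound $h_m'$ uniformly on the inner annulus. This yields $C e^{(m+1)g(z)}/(m+1)\le Ce^{mg(z)}/m$.
\end{enumerate}
The prefactor $\lambda\overline z$ is bounded, so \eqref{eq:transform-error} follows. The estimate for $\VV_k^-\overline{p_m}$ follows by conjugation.
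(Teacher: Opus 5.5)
Your proposal is correct. Parts (i) and (iii) follow the paper essentially verbatim: the monomial check, the extension to $E^2(D)$ via Cauchy's formula, and Fourier--Bessel coefficients on a small disk followed by analytic continuation. For (ii) and (iv) you integrate by parts, whereas the paper uses only the triangle inequality along conformal radial paths.

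\textbf{Part (ii).} The paper maps to the disk and uses the Hardy-space growth bound $|f(\chi(re^{i\theta}))|\le C\norm f_{L^2(\Gamma)}(1-r^2)^{-1/2}$. Integrating this along the radial path gives $\int_{\gamma_z}|f|\,|d\zeta|\le C\norm f_{L^2(\Gamma)}$. Differentiating $\int_0^z f(\zeta)K(z,\zeta)\,d\zeta$ then yields $|(\VV_k^+-\Id)f(z)|\le C\norm f$ and $|\nabla(\VV_k^+-\Id)f(z)|\le C(|f(z)|+\norm f)$ directly, with no $\zeta$-derivative of the kernel. Your bounded primitive $\mathcal F$ carries the same information, since the paper's path bound already gives $|\mathcal F|\le C\norm f$. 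The two routes are therefore equivalent in content; yours costs one extra kernel derivative. For the $H^1(D)$ half, however, you also need $\norm f_{L^2(D)}\le C\norm f_{L^2(\Gamma)}$, because $\nabla(\overline z\mathcal F)$ contains $\overline z f$. Your displayed estimate for $\mathcal F$ does not supply this. The paper obtains it from $\int_{\mathbb D}|F|^2\,dA=\pi\sum_n|a_n|^2/(n+1)$.

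\textbf{Part (iv).} Here the integration by parts is unnecessary. Along $\zeta(s)=\Phi^{-1}(e^{s+i\theta})$ one has $|p_m(\zeta(s))|\le Ce^{ms}$, with bounded speed and bounded kernel. The collar contribution is therefore at most $C\int_{-\kappa}^{g}e^{ms}\,ds\le Ce^{mg}/m$. The factor $1/m$ comes from the exponential weight concentrating at the endpoint, not from moving a derivative. Your version also works, provided the collar path is radial so that $|w|\le e^{g(z)}$ along it. It does, however, need the extra input that $h_m$ is uniformly bounded on a strictly larger annulus, so that Cauchy estimates apply to $h_m'$. The paper's one-line estimate avoids this.
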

	
	\begin{proof}
		(i) For $f(\zeta)=\zeta^m$, expanding $\Lambda$ and using
		$\int_0^z\zeta^m(z-\zeta)^\ell\,d\zeta=\frac{m!\,\ell!}{(m+\ell+1)!}z^{m+\ell+1}$
		gives
		\[
		z^m-\lambda\overline z\int_0^z\zeta^m
		\Lambda(\lambda\overline z(z-\zeta))\,d\zeta
		=z^m\sum_{n\ge0}\frac{m!\,(-\lambda|z|^2)^n}{n!\,(m+n)!},
		\]
		which equals \eqref{eq:monomial-transform} by \cite[Eq.~10.2.2]{DLMF}.
		By linearity the formula holds for polynomials. For fixed $z$ the
		integrand is holomorphic in $\zeta$, so simple connectivity gives path
		independence. If polynomials converge in $L^2(\Gamma)$, Cauchy's
		formula gives locally uniform convergence in $D$ of the polynomials and
		their derivatives, hence of the integrals; this extends
		\eqref{eq:transform-integral} to $E^2(D)$. The Helmholtz equation holds
		for monomial images by Bessel's equation and passes to the limit; the
		statement for $\VV_k^-$ follows by conjugation. Finally,
		\eqref{eq:monomial-transform} and its conjugate produce all
		$J_m(k|z|)e^{\pm im\arg z}$, $0\le m\le M$, with
		$\VV_k^+1=\VV_k^-1=J_0(k|z|)$; these span $\BH_M$.
		
		(ii) It suffices to prove the estimate for polynomials. Put
		$N=\norm f_{L^2(\Gamma)}$. By the Riemann mapping theorem, there exists a conformal map $\chi:\mathbb D\to D$ from the unit disk to $D$ with $\chi(0)=0$. Since $\Gamma$ is analytic, 
		$|\chi'|$ is bounded above and below on $\overline{\mathbb D}$.
		For $F=f\circ\chi=\sum_{n\ge0}a_nw^n$, Parseval's identity gives
		$\sum_n|a_n|^2\le CN^2$. Cauchy--Schwarz therefore gives
		\[
		|F(re^{i\theta})|
		\le\Bigl(\sum_n|a_n|^2\Bigr)^{1/2}
		\Bigl(\sum_n r^{2n}\Bigr)^{1/2}
		\le\frac{CN}{\sqrt{1-r^2}},\qquad 0\le r<1.
		\]
		For $z=\chi(re^{i\theta})$, integrate along the path
		$\gamma_z=\{\chi(se^{i\theta}):0\le s\le r\}$. The preceding bound
		and boundedness of $\chi'$ imply
		\begin{equation}
			\int_{\gamma_z}|f(\zeta)|\,|d\zeta|
			\le CN\int_0^r\frac{ds}{\sqrt{1-s^2}}
			\le CN,\qquad 0\le r\le1.
			\label{eq:path-bound}
		\end{equation}
		
		Write \eqref{eq:transform-integral} as
		\[
		Wf(z):=(\VV_k^+-\Id)f(z)=\int_0^z f(\zeta)K(z,\zeta)\,d\zeta,
		\qquad
		K(z,\zeta)=-\lambda\overline z\,
		\Lambda(\lambda\overline z(z-\zeta)).
		\]
		The kernel $K$ and its first derivatives in $z=x+iy$ are bounded on
		$\overline D\times\overline D$. Differentiating the path-independent
		integral gives, for $\alpha=x,y$,
		\[
		\partial_\alpha Wf(z)
		=f(z)K(z,z)\,\partial_\alpha z
		+\int_0^z f(\zeta)\partial_\alpha K(z,\zeta)\,d\zeta,
		\qquad \partial_xz=1,\quad\partial_yz=i.
		\]
		Thus \eqref{eq:path-bound} yields
		\[
		|Wf(z)|\le CN,\qquad |\nabla Wf(z)|\le C\bigl(|f(z)|+N\bigr).
		\]
		On $\Gamma$, $|\partial_sWf|\le|\nabla Wf|$, so squaring and
		integrating proves the $H^1(\Gamma)$ bound. For the interior bound,
		Parseval also gives
		\[
		\norm f_{L^2(D)}^2\le C\int_{\mathbb D}|F(w)|^2\,dA(w)
		=C\pi\sum_{n\ge0}\frac{|a_n|^2}{n+1}\le CN^2.
		\]
		Integrating the same pointwise estimates over $D$ proves the
		$H^1(D)$ bound. Density of polynomials now gives
		\eqref{eq:transform-regularity} for every $f\in E^2(D)$.
		By the locally uniform convergence established in (i), this extension
		agrees with \eqref{eq:transform-integral} in $D$. Conjugation gives
		the assertion for $\VV_k^-$.
		
		(iii) Write $q(z)=B(\overline z)$ with $B$ holomorphic in
		$\{\overline z:z\in D\}$, and expand $F=\sum_m F_mz^m$,
		$B=\sum_m q_mw^m$ near $0$. On a sufficiently small closed disk about
		$0$, these Taylor series converge uniformly. Applying
		\eqref{eq:transform-integral} to their partial sums and using
		\eqref{eq:monomial-transform} gives, for small $|z|$,
		\[
		(\VV_k^+F+\VV_k^-q)(|z|e^{i\theta})
		=(F_0+q_0)J_0(k|z|)
		+\sum_{m\ge1}m!\,(2/k)^mJ_m(k|z|)
		\bigl(F_me^{im\theta}+q_me^{-im\theta}\bigr).
		\]
		If this vanishes, uniqueness of Fourier coefficients gives
		$F_m=q_m=0$ for $m\ge1$ and $F_0+q_0=0$, since no $J_m(k\cdot)$
		vanishes identically. Analytic continuation gives the claim on $D$.
		
		(iv) Write $z=\Phi^{-1}(e^{g+i\theta})$. In
		\eqref{eq:transform-integral}, integrate first from $0$ to
		$\Phi^{-1}(e^{-\kappa+i\theta})$ inside $\overline{D_\kappa}$ (paths
		of bounded length), then along
		$\zeta(s)=\Phi^{-1}(e^{s+i\theta})$, $-\kappa\le s\le g$ (bounded
		speed). The kernel is bounded, so by \eqref{eq:polynomial-bounds}
		\[
		|\VV_k^+p_m(z)-p_m(z)|
		\le Ce^{-\kappa m}+C\int_{-\kappa}^g e^{ms}\,ds
		\le \frac{C}{m}e^{mg},
		\]
		using $\sup_m me^{-\kappa m}<\infty$ and $g\ge0$. Conjugation gives the
		second statement.
	\end{proof}
	
	\subsection{The Gram operator}
	\label{gram}
	For $m\ge1$ put
	\begin{equation}
		\psi_0=\VV_k^+p_0,\qquad
		\psi_m^+=\VV_k^+p_m,\qquad
		\psi_m^-=\VV_k^-\overline{p_m}=\overline{\psi_m^+}.
		\label{eq:helmholtz-basis}
	\end{equation}
	By Lemma~\ref{lem:transform-properties}(i), the $2M+1$ functions
	$\psi_0,\psi_1^\pm,\dots,\psi_M^\pm$ form a basis of $\BH_M$. Let
	\[
	\mathcal L=\mathbb C\oplus\ell^2(\mathbb N)\oplus\ell^2(\mathbb N),
	\qquad
	\|a\|_{\mathcal L}^2=|a_0|^2+\sum_{m\ge1}(|a_m^+|^2+|a_m^-|^2),
	\]
	with coordinates ordered $0,1+,1-,2+,2-,\ldots$. For finitely supported
	$a$ define the synthesis operators
	\begin{align}
		R_0a&=a_0p_0+\sum_{m\ge1}\bigl(a_m^+p_m+a_m^-\overline{p_m}\bigr),
		\label{eq:R0}\\
		T_0a&=a_0\psi_0+\sum_{m\ge1}\bigl(a_m^+\psi_m^++a_m^-\psi_m^-\bigr),
		\label{eq:T0}
	\end{align}
	with values (restricted to $\Gamma$) in $L^2(\Gamma)$. They give the
	boundary traces of the harmonic and the Helmholtz functions with
	coefficients $a$, respectively.
	
	\begin{lemma}[Uniform lower bound for the Gram operator]
		\label{lem:gram}
		$R_0$ and $T_0$ extend to bounded operators $\mathcal L\to L^2(\Gamma)$,
		and the Gram operator $\GG=T_0^*T_0$ satisfies
		\begin{equation}
			\GG=\Id+\CC,\qquad \CC\text{ compact},\qquad \GG\ge c_G\Id
			\label{eq:gram}
		\end{equation}
		for some fixed $c_G>0$. Hence the finite Gram matrices $\GG_M=\mathsf P_M\GG\mathsf
		P_M|_{\operatorname{Ran}\mathsf P_M}$ satisfy $\norm{\GG_M^{-1}}\le c_G^{-1}$,
		where $\mathsf P_M$ is the coordinate projection onto
		$0,1\pm,\dots,M\pm$.
	\end{lemma}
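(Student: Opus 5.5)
The plan is to compare $T_0$ with the harmonic synthesis operator $R_0$, show that $R_0$ is a compact perturbation of an isometry and that $T_0-R_0$ is compact, and then get the uniform lower bound from injectivity plus the Fredholm alternative.

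\emph{Boundedness of $R_0$ and compactness of its cross terms.} The family $\{p_m\}_{m\ge0}$ is an orthonormal basis of $E^2(D)$, and $\{\overline{p_m}\}_{m\ge1}$ is orthonormal as well. Hence
\[
R_0^*R_0=\Id+\mathsf H+\mathsf H^*,
\]
where $\mathsf H$ collects the cross Gram entries
\[
\langle p_n,\overline{p_m}\rangle_{L^2(\Gamma)}=\int_\Gamma p_mp_n\,ds,\qquad n\ge0,\ m\ge1.
\]
To show $\mathsf H$ is compact, I would change variables $w=\Phi(z)$ on $\Gamma$, so that $ds=h_\Gamma\,|dw|$. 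The integrand $p_mp_n$ is analytic in a full annulus about $\Gamma$. Using \eqref{eq:szego}, and better the exact analytic factorization $p_m=\sqrt{\Phi'/2\pi}\,\Phi^m(1+\varepsilon_m)$ with $\varepsilon_m$ analytic near $\Gamma$, the entry becomes a Fourier coefficient of index $m+n$ of an analytic function on the unit circle, plus the error terms. The model case is $\frac1{2\pi}\int(\Phi'/|\Phi'|)\,w^{m+n}\,d\theta$. Since $\Gamma$ is analytic, $\Phi'/|\Phi'|$ extends analytically to an annulus, so these entries decay like $e^{-c(m+n)}$. This is a Hankel matrix with exponentially decaying entries, hence Hilbert--Schmidt.

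The $o(1)$ errors in \eqref{eq:szego} are where I expect the main technical obstacle. I would avoid needing rates as follows. The holomorphic function $p_mp_n$ is of size $e^{(m+n)g}$ in the collar by \eqref{eq:polynomial-bounds}. So I would deform the contour slightly inward, to $\Gamma_{\kappa}$, after first rewriting $ds=\overline{T(z)}\,dz$ with $T$ the unit tangent. Here $\overline{T}$ extends analytically near $\Gamma$ because $\Gamma$ is analytic. This directly gives
\[
\Bigl|\int_\Gamma p_mp_n\,ds\Bigr|\le Ce^{-\kappa'(m+n)},
\]
with no error analysis of Szeg\H{o}'s formula. Boundedness of $R_0$ follows at once.

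\emph{Compactness of $T_0-R_0$.} For $a\in\mathcal L$ finitely supported, set $F=a_0p_0+\sum a_m^+p_m$ and $q=\sum a_m^-\overline{p_m}$. Then $\|F\|_{L^2(\Gamma)}^2+\|q\|_{L^2(\Gamma)}^2=\|a\|_{\mathcal L}^2$, and
\[
(T_0-R_0)a=(\VV_k^+-\Id)F+(\VV_k^--\Id)q.
\]
By Lemma~\ref{lem:transform-properties}(ii) this is bounded from $\mathcal L$ into $H^1(\Gamma)$. The embedding $H^1(\Gamma)\hookrightarrow L^2(\Gamma)$ is compact, so $T_0-R_0:\mathcal L\to L^2(\Gamma)$ is compact, and $T_0$ extends boundedly. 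Expanding
\[
\GG=(R_0+K)^*(R_0+K),\qquad K:=T_0-R_0,
\]
shows $\GG=\Id+\CC$ with $\CC$ compact.

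\emph{Injectivity and the lower bound.} Suppose $T_0a=0$, and let $u=\VV_k^+F+\VV_k^-q$. By Lemma~\ref{lem:transform-properties}(i),(ii), $u$ solves the Helmholtz equation in $D$ and equals $F+q$ plus an $H^1(D)$ function. Its boundary trace is zero in $L^2(\Gamma)$, realized nontangentially through the Smirnov classes. I would argue that $u\equiv0$ using the nonresonance condition. Concretely, $F+q$ is the Poisson extension of its $L^2$ trace. Subtracting the harmonic extension of the $H^{1}(\Gamma)$ trace of the $H^1$ part leaves a Helmholtz-type problem with $L^2$ data zero, so uniqueness of the $L^2$ Dirichlet problem at non-eigenvalue $k^2$ applies. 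This uniqueness is the second delicate point, and if needed I would cite the standard theory for Lipschitz/analytic domains.

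Lemma~\ref{lem:transform-properties}(iii) then gives $F\equiv c$ and $q\equiv-c$. Orthonormality of $\{p_m\}$ in $E^2(D)$ forces $a_m^+=0$ for $m\ge1$. Since $q$ has no constant term, $a_m^-=0$ for all $m$, so $c=0$ and hence $a_0=0$. Thus $\GG$ is injective.

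$\GG$ is self-adjoint, nonnegative, and of the form $\Id+$ compact, so its spectrum consists of $1$ together with eigenvalues accumulating only at $1$. Since $0$ is not an eigenvalue, $\min\sigma(\GG)=:c_G>0$. Finally, for $x\in\operatorname{Ran}\mathsf P_M$,
\[
\langle\GG_Mx,x\rangle=\langle\GG x,x\rangle\ge c_G\|x\|^2,
\]
which gives $\|\GG_M^{-1}\|\le c_G^{-1}$.
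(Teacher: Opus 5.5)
Your proposal follows the paper's proof essentially step for step: the cross entries $\int_\Gamma p_mp_n\,ds$ are bounded by $Ce^{-\kappa(m+n)}$ by writing $ds=\omega(z)\,dz$, with $\omega$ the holomorphic continuation of $\overline{T}=1/z'(s)$, and deforming to $\Gamma_\kappa$; compactness of $T_0-R_0$ comes from the $H^1(\Gamma)$ Vekua bound; and injectivity uses nonresonance plus Lemma~\ref{lem:transform-properties}(iii) before the Fredholm argument. The one difference is your appeal to $L^2$-Dirichlet uniqueness for the Helmholtz equation, which the paper avoids: with $w:=u-(F+q)\in H^1(D)$, the condition $T_0a=0$ gives $(F+q)|_\Gamma=-w|_\Gamma\in H^1(\Gamma)$, so the Poisson extension $F+q$ lies in $H^{3/2}(D)$, hence $u\in H^1_0(D)$ and the ordinary weak Dirichlet-eigenvalue uniqueness applies.
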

	
	\begin{proof}
		\emph{Step 1: $R_0^*R_0-\Id$ is Hilbert--Schmidt.} 
		We compute the entries of $R_0^*R_0$, that is, the inner products of
		the functions $p_0,p_m,\overline{p_m}$ ($m\ge1$). Within each family $\{p_m\}_{m\ge0}$ and $\{\overline{p_m}\}_{m\ge1}$
		the entries are Kronecker deltas, by orthonormality. Since $p_0$ is
		real, $\langle\overline{p_m},p_0\rangle_{L^2(\Gamma)}=\overline{\langle p_m,p_0\rangle_{L^2(\Gamma)}}=0$
		for $m\ge1$. The only remaining entries are
		$\langle p_n,\overline{p_m}\rangle_{L^2(\Gamma)}=\int_\Gamma p_mp_n\,ds$ for
		$m,n\ge1$, together with their complex conjugates.
		
		To estimate these, we use that $\Gamma$ is analytic. 
        Let \(z=z(s)\), \(s\in\mathbb R\), be the arclength parametrization of
\(\Gamma\subset\mathbb R^2\cong\mathbb C\), traversed counterclockwise.
Then \(z(s)\) is \(|\Gamma|\)-periodic and real-analytic with \(|z'(s)|=1\),
so it has the Fourier expansion
\[
  z(s)=\sum_{n\in\mathbb Z}\hat z_n\,e^{2\pi i n s/|\Gamma|}
\]
whose coefficients decay exponentially:
\(|\hat z_n|\le C e^{-2\pi\delta|n|/|\Gamma|}\) for some \(C,\delta>0\).
For \(|\operatorname{Im}s|\le\delta'<\delta\), each term is bounded by
\(C e^{-2\pi(\delta-\delta')|n|/|\Gamma|}\). So the series converges
uniformly on every such closed substrip, and it extends \(z\) holomorphically
to the strip \(\{|\operatorname{Im}s|<\delta\}\), still \(|\Gamma|\)-periodic.
		For small $\delta$ the extension is injective modulo the period with $z'\neq0$, and maps
		the strip onto a ring domain $U\supset\Gamma$. With $s(\cdot)$ any
		local holomorphic inverse, define
		\[
		\omega(z)=\frac{1}{z'(s(z))},\qquad z\in U .
		\]
		Since \(z'\) is \(|\Gamma|\)-periodic, the definition is independent of the choice of local inverse. This $\omega$ is holomorphic in $U$, and on $\Gamma$ we have
		$dz=z'(s)\,ds$. Hence
		$ds=\omega(z)\,dz$ on $\Gamma$.
		Shrinking $\kappa$ if necessary, the closed ring between $\Gamma_\kappa$
		and $\Gamma$ lies in $U$. Since $p_mp_n\omega$ is holomorphic there,
		Cauchy's theorem and \eqref{eq:polynomial-bounds} give
		\[
		\Bigl|\int_\Gamma p_mp_n\,ds\Bigr|
		=\Bigl|\int_{\Gamma_\kappa}p_mp_n\,\omega\,dz\Bigr|
		\le|\Gamma_\kappa|\,\max_{\Gamma_\kappa}|\omega|\,
		\max_{\Gamma_\kappa}|p_m|\,\max_{\Gamma_\kappa}|p_n|
		\le C_1e^{-\kappa(m+n)},
		\]
		with $C_1$ independent of $m,n$.
		
		Hence, on finitely supported vectors, $R_0^*R_0=\Id+\mathsf K$, where
		$\mathsf K$ has nonzero entries only in the cross block between the
		$+$ and $-$ coordinates, and
		\[
		\norm{\mathsf K}_{\mathrm{HS}}^2
		\le 2C_1^2\sum_{m,n\ge1}e^{-2\kappa(m+n)}
		=\frac{2C_1^2}{(e^{2\kappa}-1)^2}<\infty .
		\]
		Therefore $\norm{R_0a}^2=\langle(\Id+\mathsf K)a,a\rangle_{\mathcal L}
		\le(1+\norm{\mathsf K})\norm a^2$. So $R_0$ extends to a bounded
		operator on $\mathcal L$, and $R_0^*R_0=\Id+\mathsf K$ with $\mathsf K$
		compact.
		
		\emph{Step 2: $T_0-R_0$ is compact.} We have $(T_0-R_0)a
		=(\VV_k^+-\Id)F+(\VV_k^--\Id)q$, where
		$F=a_0p_0+\sum a_m^+p_m$ and $q=\sum a_m^-\overline{p_m}$, with
		$\norm F_{L^2(\Gamma)}^2+\norm q_{L^2(\Gamma)}^2=\norm a^2$. By
		\eqref{eq:transform-regularity}, $T_0-R_0:\mathcal L\to H^1(\Gamma)$ is
		bounded, hence compact into $L^2(\Gamma)$. Therefore $T_0$ is bounded
		and
		\[
		T_0^*T_0=R_0^*R_0+R_0^*(T_0-R_0)+(T_0-R_0)^*T_0=\Id+\CC,
		\]
		with $\CC$ compact.
		
		\emph{Step 3: $\ker T_0=\{0\}$.} 
		Let $T_0a=0$, with $F,q$ as in
		Step~2. Here $F$ is holomorphic and $q$ is antiholomorphic in $D$. Put
		\[
		u=\VV_k^+F+\VV_k^-q .
		\]
		Then $u$ solves $\Delta u+k^2u=0$ in $D$, and its boundary
		trace is $T_0a=0$. Write $u=h+w$, where $h=F+q$ is harmonic and
		$w=(\VV_k^+-\Id)F+(\VV_k^--\Id)q$. By Step 2, $w\in H^1(D)$ and
		$w|_\Gamma\in H^1(\Gamma)$. Hence $h|_\Gamma=-w|_\Gamma\in H^1(\Gamma)$,
		so $h\in H^{3/2}(D)\subset H^1(D)$  by standard regularity
of the harmonic Poisson extension. Thus $u\in H^1_0(D)$ is a weak
		solution of the Helmholtz equation. Since $k^2$ is not a Dirichlet
		eigenvalue, $u\equiv0$ in $D$.
		Lemma~\ref{lem:transform-properties}(iii)
		then gives $F\equiv c$, $q\equiv-c$. But $q$ is orthogonal to
		constants by Step~1 and the definition of $q$, so $c=0$.
		Orthonormality then gives $a=0$.
		
		\emph{Conclusion.} $\GG$ is self-adjoint, nonnegative, of the form
		$\Id+$compact, and injective. By the Fredholm alternative it is
		invertible, hence $\GG\ge c_G\Id$ with $c_G>0$. Compressing to
		$\operatorname{Ran}\mathsf P_M$ gives $\GG_M\ge c_G\Id$.
	\end{proof}
	
	\subsection{Asymptotics of the kernel near the boundary}
	\label{aker}
	Let
	\[
	e_M(z)=\bigl(\overline{\psi_0(z)},
	\overline{\psi_1^+(z)},\overline{\psi_1^-(z)},\ldots,
	\overline{\psi_M^+(z)},\overline{\psi_M^-(z)}\bigr)^{\mathsf T},
	\]
	also regarded as an element of $\mathcal L$ (extended by zero).
	The diagonal reproducing kernel of \(\mathcal B_M\), equipped with the \(L^2(\Gamma)\) norm, is
	\begin{equation}
		K_M(z)=\sup\{|u(z)|^2:u\in\BH_M,\ \norm u_{L^2(\Gamma)}=1\}.
		\label{eq:kernel}
	\end{equation}
	\begin{lemma}[Kernel in terms of the Gram matrix]
		\label{lem:gram-kernel}
		For $M\ge1$ and $z\in\mathbb R^2$,
		\[
		K_M(z)=e_M(z)^*\GG_M^{-1}e_M(z).
		\]
	\end{lemma}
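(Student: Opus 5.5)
This is the standard reproducing-kernel computation for a finite-dimensional space with a non-orthonormal basis; the only inputs are Lemma~\ref{lem:transform-properties}(i) and Lemma~\ref{lem:gram}. We parametrize $\BH_M$ by coefficients. By Lemma~\ref{lem:transform-properties}(i), the functions $\psi_0,\psi_1^\pm,\dots,\psi_M^\pm$ form a basis of $\BH_M$. Hence every $u\in\BH_M$ can be written uniquely as $u=T_0a$ with $a\in\operatorname{Ran}\mathsf P_M$, where $T_0$ is the synthesis operator \eqref{eq:T0} and its entire extension is used off $\Gamma$. Two identities then follow directly from the definitions:
\[
\norm u_{L^2(\Gamma)}^2=\langle T_0^*T_0a,a\rangle_{\mathcal L}=\langle \GG_Ma,a\rangle_{\mathcal L},
\qquad
u(z)=\sum a_\bullet\psi_\bullet(z)=\langle a,e_M(z)\rangle_{\mathcal L}.
\]
The conjugates placed in the definition of $e_M$ are what make the second identity hold.

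Next we reduce to a weighted Cauchy--Schwarz problem. By Lemma~\ref{lem:gram}, $\GG_M\ge c_G\Id$ on $\operatorname{Ran}\mathsf P_M$, so $\GG_M$ is positive definite and $\GG_M^{\pm1/2}$ exist. Set $b=\GG_M^{1/2}a$. The constraint $\norm u_{L^2(\Gamma)}=1$ becomes $\norm b_{\mathcal L}=1$, and
\[
u(z)=\langle \GG_M^{-1/2}b,e_M(z)\rangle_{\mathcal L}=\langle b,\GG_M^{-1/2}e_M(z)\rangle_{\mathcal L}.
\]
Cauchy--Schwarz gives
\[
|u(z)|^2\le \norm{\GG_M^{-1/2}e_M(z)}^2=e_M(z)^*\GG_M^{-1}e_M(z).
\]

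We then check attainment. If $e_M(z)=0$, both sides vanish. Otherwise take
\[
b=\frac{\GG_M^{-1/2}e_M(z)}{\norm{\GG_M^{-1/2}e_M(z)}},
\]
which achieves equality in Cauchy--Schwarz. The supremum in \eqref{eq:kernel} is therefore a maximum and equals $e_M(z)^*\GG_M^{-1}e_M(z)$.

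The argument contains no real analytic obstacle. The one point needing care is that the $L^2(\Gamma)$ norm is indeed a norm on $\BH_M$, so that $\GG_M$ is invertible; this is exactly what Lemma~\ref{lem:gram}, or equivalently the nonresonance condition, supplies.
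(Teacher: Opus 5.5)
Your proof is correct and follows the paper's argument. Both write $u=T_0a$ with $\|u\|_{L^2(\Gamma)}^2=a^*\GG_Ma$ and $u(z)=e_M(z)^*a$, apply Cauchy--Schwarz, and attain equality at $a\propto\GG_M^{-1}e_M(z)$; the only cosmetic difference is that you whiten with $b=\GG_M^{1/2}a$ and use the standard inner product, whereas the paper applies Cauchy--Schwarz directly in the $\GG_M$-weighted inner product with $y=\GG_M^{-1}e_M(z)$.
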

	
	\begin{proof}
		Each $u\in\BH_M$ has a unique coefficient vector $a$ in the basis
		\eqref{eq:helmholtz-basis}, with
		$\norm u_{L^2(\Gamma)}^2=a^*\GG_Ma$ and $u(z)=e_M(z)^*a$. Put
		$y=\GG_M^{-1}e_M(z)$. Then $u(z)=y^*\GG_Ma$, and the Cauchy--Schwarz
		inequality for the inner product $(a,b)\mapsto b^*\GG_Ma$ gives
		\[
		|u(z)|^2\le(y^*\GG_My)(a^*\GG_Ma)
		=\bigl(e_M(z)^*\GG_M^{-1}e_M(z)\bigr)\norm u_{L^2(\Gamma)}^2.
		\]
		If $e_M(z)\ne0$, then $a=y/\sqrt{e_M(z)^*y}$ gives equality with
		$a^*\GG_Ma=1$. If $e_M(z)=0$, both sides vanish.
	\end{proof}
	
	\begin{proposition}[Kernel asymptotics]
		\label{prop:bessel-kernel}
		For every $T>0$, uniformly in $x\in\Gamma$ and $0\le t\le T$,
		\begin{equation}
			\frac1M K_M\bigl(x+h_\Gamma(x)t\nu(x)/M\bigr)
			\longrightarrow \frac{G(t)}{\pi h_\Gamma(x)}\qquad(M\to\infty).
			\label{eq:bessel-kernel-limit}
		\end{equation}
	\end{proposition}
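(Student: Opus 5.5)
The plan is to compare $K_M$, given by Lemma~\ref{lem:gram-kernel} as $e_M(z)^*\GG_M^{-1}e_M(z)$, with the much simpler quantity $\norm{e_M(z)}^2$. That quantity will in turn be compared with the harmonic (Szeg\H{o}) kernel
\[
K_M^0(z):=|p_0(z)|^2+2\sum_{m=1}^M|p_m(z)|^2 .
\]
The asymptotics of $K_M^0$ come directly from \eqref{eq:szego}.

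\emph{Step 1 (harmonic kernel).} Put $z=z_M(x,t)=x+h_\Gamma(x)t\nu(x)/M$. By \eqref{eq:conformal-normal-scale} we have $g(z)=t/M+\mathcal O(M^{-2})$ uniformly in $x\in\Gamma$ and $t\le T$, so $e^{2mg(z)}=e^{2tm/M}(1+\mathcal O(1/M))$ for $m\le M$. Also $|\Phi'(z)|=h_\Gamma(x)^{-1}(1+\mathcal O(1/M))$. The Szeg\H{o} asymptotics \eqref{eq:szego} then give
\[
|p_m(z)|^2=\frac{1+o(1)}{2\pi h_\Gamma(x)}e^{2tm/M},
\]
where $o(1)$ refers to $m\to\infty$, uniformly in $x$ and $t$. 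The terms with $m\le m_0$ are $\mathcal O(1)$ by \eqref{eq:polynomial-bounds}. A Riemann-sum argument, as in Theorem~\ref{thm:disk-halfheight-resolution}, therefore yields
\[
\frac1M K_M^0(z)\to \frac{1}{\pi h_\Gamma(x)}\int_0^1e^{2ts}\,ds=\frac{G(t)}{\pi h_\Gamma(x)},
\]
uniformly in $x$ and $t$.

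\emph{Step 2 (from $p_m$ to $\psi_m^\pm$).} Since $\psi_m^-=\overline{\psi_m^+}$, we have
\[
\norm{e_M(z)}^2=|\psi_0(z)|^2+2\sum_{m=1}^M|\psi_m^+(z)|^2 .
\]
Lemma~\ref{lem:transform-properties}(iv) gives $|\psi_m^+(z)-p_m(z)|\le Cm^{-1}e^{mg(z)}\le C_Tm^{-1}$ for $m\le M$ and $0\le g(z)\le g_0$. Hence the vector $e_M(z)$ differs from the vector $e^0_M(z)$ of conjugated Szeg\H{o} values by an amount whose squared norm is $\mathcal O(\sum m^{-2})=\mathcal O(1)$. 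Since $\norm{e^0_M(z)}^2=K_M^0(z)\asymp M$ uniformly, this gives $\norm{e_M(z)}^2=K_M^0(z)(1+\mathcal O(M^{-1/2}))$.

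\emph{Step 3 (removing the Gram matrix; main obstacle).} Write $\hat e=e_M(z)/\norm{e_M(z)}$. Since $\GG_M=\mathsf P_M(\Id+\CC)\mathsf P_M$, we have $\Id-\GG_M^{-1}=\GG_M^{-1}\mathsf P_M\CC\mathsf P_M$. Lemma~\ref{lem:gram} then gives
\[
\Bigl|\frac{K_M(z)}{\norm{e_M(z)}^2}-1\Bigr|\le c_G^{-1}\norm{\CC\hat e}_{\mathcal L}.
\]
The crux is to show that $\norm{\CC\hat e}\to0$ uniformly in $x$ and $t$. By Steps 1 and 2, each coordinate of $\hat e$ is bounded by $C_T M^{-1/2}$. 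Consequently, for any fixed $n$, the projection $\mathsf P_n\hat e$ has norm at most $C_T\sqrt{2n+1}\,M^{-1/2}\to0$, uniformly. Given $\varepsilon>0$, compactness of $\CC$ lets us choose $n$ with $\norm{\CC(\Id-\mathsf P_n)}\le\varepsilon$. This uses the fact that a compact operator composed with tail projections tends to zero in norm, because $\CC^*$ is compact and $\Id-\mathsf P_n\to0$ strongly. It follows that $\norm{\CC\hat e}\le\varepsilon+\norm{\CC}\norm{\mathsf P_n\hat e}$, and hence $\limsup_M\norm{\CC\hat e}\le\varepsilon$.

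Combining the three steps gives $K_M(z)/M\to G(t)/(\pi h_\Gamma(x))$ uniformly, which is \eqref{eq:bessel-kernel-limit}. The delicate points are uniformity in $x$ in Step 1, which requires the uniform version of \eqref{eq:szego} together with uniform control of the collar expansion of $g$ and $\Phi'$, and the uniform smallness of low coordinates in Step 3. Both reduce to the pointwise bounds $|p_m|,|\psi_m^\pm|\le C_T$ on the scaled collar for $m\le M$.
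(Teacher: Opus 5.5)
Your proposal is correct and is essentially the paper's proof. It uses the same three ingredients: removing the Gram matrix via compactness of $\CC$ and the identity $\GG_M^{-1}-\Id=-\GG_M^{-1}\mathsf P_M\CC\mathsf P_M$, passing from $\psi_m^\pm$ to $p_m$ with Lemma~\ref{lem:transform-properties}(iv), and Szeg\H{o} asymptotics plus a Riemann sum. You present them in reverse order, and the one cosmetic difference is that you normalize $e_M(z)$ to get a relative error, using $\norm{e_M(z)}^2\gtrsim M$ from your Steps~1--2, whereas the paper shows $\norm{\CC e_M(z)}=o(\sqrt M)$ directly and absorbs the Gram correction as an additive $o(M)$.
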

	
	\begin{proof}
		Let $z=x+h_\Gamma(x)t\nu(x)/M$. By \eqref{eq:conformal-normal-scale},
		$0\le g(z)=t/M+\mathcal O(M^{-2})$ uniformly. By \eqref{eq:polynomial-bounds}
		and \eqref{eq:transform-error}, every coordinate of $e_M(z)$ is bounded
		by $C_T$, so $\norm{e_M(z)}\le C_T\sqrt M$.
		
		\emph{Step 1: removing the Gram matrix.} Since $\CC$ is compact,
		$\norm{\CC(\Id-\mathsf P_N)}\to0$ as $N\to\infty$. Hence
		\[
		\norm{\CC e_M(z)}\le\norm{\CC(\Id-\mathsf P_N)}\,C_T\sqrt M
		+\norm\CC\,\norm{\mathsf P_Ne_M(z)}
		\le\norm{\CC(\Id-\mathsf P_N)}\,C_T\sqrt M+C_{T,N},
		\]
		so $\norm{\CC e_M(z)}=o(\sqrt M)$ uniformly. On
		$\operatorname{Ran}\mathsf P_M$ we have
		$\GG_M^{-1}-\Id=-\GG_M^{-1}\mathsf P_M\CC\mathsf P_M$. By
		Lemmas~\ref{lem:gram} and~\ref{lem:gram-kernel},
		\begin{equation}
			K_M(z)=\norm{e_M(z)}^2+O\bigl(\norm{e_M(z)}\,\norm{\CC e_M(z)}\bigr)
			=\norm{e_M(z)}^2+o(M).
			\label{eq:compact-kernel-error}
		\end{equation}
		
		\emph{Step 2: from Helmholtz to harmonic values.} Here
		$|p_m(z)|\le C_T$ and $|\psi_m^+(z)-p_m(z)|\le C_T/m$, so
		$\bigl||\psi_m^+(z)|^2-|p_m(z)|^2\bigr|\le C_T/m$.
		Since $|\psi_m^-|=|\psi_m^+|$, summing gives
		\[
		\norm{e_M(z)}^2=2\sum_{m=1}^{M}|p_m(z)|^2+\mathcal O(\log(M+1)).
		\]
		
		\emph{Step 3: Szeg\H{o} asymptotics.} By \eqref{eq:szego},
		$|p_m(z)|^2=\frac{|\Phi'(z)|}{2\pi}e^{2mg(z)}(1+o(1))$ as
		$m\to\infty$, uniformly. Moreover $|\Phi'(z)|=h_\Gamma(x)^{-1}+o(1)$, and
		$e^{2mg(z)}=e^{2tm/M}(1+\mathcal O(M^{-1}))$ for $m\le M$. Discarding a fixed
		initial segment of indices costs $\mathcal O(1)$, so
		\[
		\frac2M\sum_{m=1}^{M}|p_m(z)|^2
		=\frac1{\pi h_\Gamma(x)}\,\frac1M\sum_{m=1}^{M}e^{2tm/M}+o(1)
		=\frac{G(t)}{\pi h_\Gamma(x)}+o(1),
		\]
		the last step being a Riemann sum, uniform for $t\in[0,T]$. Together
		with Steps~1--2 this proves \eqref{eq:bessel-kernel-limit}.
	\end{proof}
	
	\subsection{Direct comparison with Tikhonov regularization}
	\label{sec:ana-tikhonov}
	
	\begin{lemma}[Variational formula for Tikhonov regularization]
		\label{lem:tikhonov-variational}
		For every $\tau>0$ and $z\in\mathbb R^2$,
		\begin{equation}
			Q_\tau(z)=\sup_{f\ne0}
			\frac{|\HH f(z)|^2}
			{\norm{\HH f}_{L^2(\Gamma)}^2+\tau\norm f^2},
			\qquad f\in L^2(\mathbb S^1).
			\label{eq:tikhonov-variational}
		\end{equation}
	\end{lemma}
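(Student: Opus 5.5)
We prove \eqref{eq:tikhonov-variational} by rewriting $Q_\tau(z)$ as a squared dual norm of the evaluation functional $f\mapsto \HH f(z)$ on $H=L^2(\mathbb S^1)$, taken with respect to the inner product induced by $\Sigma+\tau I$. The ingredients are the definitions $\Sigma=AA^*$, $A^*f=(\HH f)|_\Gamma$ and $\HH f(z)=\langle f,\phi_z\rangle_H$.

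\emph{Step 1 (norm identity).} For every $f\in H$,
\[
\langle(\Sigma+\tau I)f,f\rangle_H=\norm{A^*f}_{L^2(\Gamma)}^2+\tau\norm f^2=\norm{\HH f}_{L^2(\Gamma)}^2+\tau\norm f^2 .
\]
So the denominator in \eqref{eq:tikhonov-variational} equals $\norm{(\Sigma+\tau I)^{1/2}f}_H^2$. It is strictly positive for $f\ne0$ because $\tau>0$, so the quotient is well defined.

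\emph{Step 2 (change of variables).} Since $\Sigma$ is bounded, self-adjoint and nonnegative, $P:=(\Sigma+\tau I)^{1/2}$ is a bounded, boundedly invertible, self-adjoint bijection of $H$. Substitute $f=P^{-1}h$. The numerator becomes
\[
|\langle P^{-1}h,\phi_z\rangle_H|^2=|\langle h,P^{-1}\phi_z\rangle_H|^2,
\]
and the denominator becomes $\norm h_H^2$. As $f$ ranges over $H\setminus\{0\}$, so does $h$.

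\emph{Step 3 (evaluate the supremum).} By Cauchy--Schwarz the supremum is at most $\norm{P^{-1}\phi_z}_H^2$. Equality holds for $h=P^{-1}\phi_z$, which is nonzero because $\phi_z\ne0$. This corresponds to the maximizer $f=(\Sigma+\tau I)^{-1}\phi_z$. Hence the supremum equals
\[
\norm{P^{-1}\phi_z}_H^2=\langle\phi_z,(\Sigma+\tau I)^{-1}\phi_z\rangle_H=Q_\tau(z),
\]
which is \eqref{eq:tikhonov-variational}.

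There is no real obstacle. The only points needing care are that $\Sigma$ here is exactly $AA^*$ with $A^*$ given by restricting the Herglotz function to $\Gamma$, and that the conjugate-linear convention $\HH f(z)=\langle f,\phi_z\rangle_H$ is used consistently, so that the adjoint moves correctly in Step 2. The formula holds for all $z\in\mathbb R^2$, including points outside $D$, because no range condition enters once $\tau>0$.
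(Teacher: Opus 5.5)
Your proof is correct and is essentially the paper's argument. The paper applies Cauchy--Schwarz directly in the weighted inner product $(f,g)\mapsto\langle(\Sigma+\tau I)f,g\rangle_H$, with the same maximizer $f=(\Sigma+\tau I)^{-1}\phi_z$; you first substitute $h=(\Sigma+\tau I)^{1/2}f$ and then use the standard Cauchy--Schwarz inequality, which is only a cosmetic difference.
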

	
	\begin{proof}
		Set $B_\tau=AA^*+\tau\Id$ and $f_{\tau,z}=B_\tau^{-1}\phi_z$.
		Since $\HH f(z)=\langle f,\phi_z\rangle_H=\langle B_\tau f,f_{\tau,z}\rangle_H$,
		Cauchy--Schwarz in the inner product
		$(f,g)\mapsto\langle B_\tau f,g\rangle_H$ gives
		\[
		|\HH f(z)|^2
		\le\langle B_\tau f,f\rangle_H\langle B_\tau f_{\tau,z},f_{\tau,z}\rangle_H
		=\bigl(\norm{\HH f}_{L^2(\Gamma)}^2+\tau\norm f^2\bigr)Q_\tau(z).
		\]
		Equality holds for $f=f_{\tau,z}\ne0$, which proves the formula.
	\end{proof}
	
	Each nonzero Fourier order occurs as a pair $\pm n$, so
	$\dim\BH_M=2M+1$; accordingly, the following comparison uses cutoffs
	near $M=J_\tau/2$.
	
	\begin{lemma}[Comparison with Fourier--Bessel kernels]
		\label{lem:direct-comparison}
		Fix $0<\eps<1$, put $J=J_\tau$ and
		$M_\pm=\lfloor(1\pm\eps)J/2\rfloor$. On every fixed compact set
		$\mathcal K\subset\mathbb R^2$, there are
		$\alpha_\tau,\beta_\tau\to0$ such that, for $z\in\mathcal K$ and all
		sufficiently small $\tau$,
		\begin{equation}
			\frac{K_{M_-}(z)}{1+\alpha_\tau}\le Q_\tau(z)
			\le\bigl((1+\beta_\tau)\sqrt{K_{M_+}(z)}+\beta_\tau\bigr)^2.
			\label{eq:direct-comparison}
		\end{equation}
	\end{lemma}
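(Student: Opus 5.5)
Both bounds will come from the variational formula of Lemma~\ref{lem:tikhonov-variational}, by comparing the Tikhonov denominator $\norm{\HH f}_{L^2(\Gamma)}^2+\tau\norm f^2$ with the pure boundary norm $\norm{\HH f}_{L^2(\Gamma)}^2$ that defines $K_M$ in \eqref{eq:kernel}. The lower bound restricts to low Fourier modes. The upper bound splits an arbitrary $f$ into low and high modes. The two inputs are Lemma~\ref{lem:fourier-estimates}, which gives $\Theta_M$ and $\delta_M$, and the count law \eqref{eq:count-law}, which fixes the size of $\tau$ relative to $J$.

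\emph{Lower bound.} Restrict the supremum in \eqref{eq:tikhonov-variational} to $f\in\operatorname{Ran}E_{M_-}$. For such $f$, \eqref{eq:density-bound} gives
\[
\tau\norm f^2\le\tau\Theta_{M_-}^2\norm{\HH f}_{L^2(\Gamma)}^2,
\]
so the denominator is at most $(1+\alpha_\tau)\norm{\HH f}^2_{L^2(\Gamma)}$ with $\alpha_\tau:=\tau\Theta_{M_-}^2$. Taking the supremum over $u=\HH f\in\BH_{M_-}$ yields $Q_\tau\ge K_{M_-}/(1+\alpha_\tau)$. It remains to show $\alpha_\tau\to0$. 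From \eqref{eq:count-law} we have $\log(1/\tau)=J\log J+\mathcal O(J)$. Since $2M_-\le(1-\eps)J$,
\[
\log\Theta_{M_-}^2=2M_-\log M_-+\mathcal O(M_-)\le(1-\eps)J\log J+\mathcal O(J).
\]
Hence $\log\alpha_\tau\le-\eps J\log J+\mathcal O(J)\to-\infty$.

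\emph{Upper bound.} For arbitrary $f$, write $f=f_1+f_2$ with $f_1=E_{M_+}f$, and set $N^2:=\norm{\HH f}^2_{L^2(\Gamma)}+\tau\norm f^2$.
\begin{itemize}
\item By \eqref{eq:angular-tail}, $|\HH f_2(z)|\le\delta_{M_+}\norm f\le\delta_{M_+}\tau^{-1/2}N$ for $z\in\mathcal K$.
\item Also $\norm{\HH f_1}_{L^2(\Gamma)}\le\norm{\HH f}_{L^2(\Gamma)}+\delta_{M_+}\norm f\le(1+\delta_{M_+}\tau^{-1/2})N$.
\item By the definition \eqref{eq:kernel}, $|\HH f_1(z)|\le\sqrt{K_{M_+}(z)}\,\norm{\HH f_1}_{L^2(\Gamma)}$.
\end{itemize}
Combining these gives
\[
|\HH f(z)|\le\bigl((1+\beta_\tau)\sqrt{K_{M_+}(z)}+\beta_\tau\bigr)N,\qquad \beta_\tau:=\delta_{M_+}\tau^{-1/2}.
\]
Squaring and taking the supremum over $f$ gives the right inequality in \eqref{eq:direct-comparison}.

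The main obstacle is checking that $\beta_\tau\to0$, since the constants in $\delta_M$ must beat $\tau^{-1/2}$. We have $\log\delta_{M_+}=-M_+\log M_++\mathcal O(M_+)$ with $M_+\approx(1+\eps)J/2$, so $\log\delta_{M_+}=-\tfrac{1+\eps}{2}J\log J+\mathcal O(J)$. Meanwhile $\tfrac12\log(1/\tau)=\tfrac12J\log J+\mathcal O(J)$. Therefore
\[
\log\beta_\tau\le-\tfrac{\eps}{2}J\log J+\mathcal O(J)\to-\infty.
\]
Here it matters that the $\mathcal O(J)$ errors are genuinely linear in $J$, uniformly. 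This holds because the constants $C$ in Lemma~\ref{lem:fourier-estimates} depend only on $\mathcal K$ and the geometry, and the relation $\ell_J\le L_\tau<\ell_{J+1}$ from Lemma~\ref{lem:spectrum} has $\mathcal O(J)$ error. The factor $\log\norm A^2$ contributes only $\mathcal O(1)$. With $J\to\infty$ as $\tau\downarrow0$, both $\alpha_\tau$ and $\beta_\tau$ tend to zero, uniformly on $\mathcal K$, which completes the plan.
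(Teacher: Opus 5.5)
Your proposal is correct and follows the paper's proof essentially step for step. The lower bound restricts the variational formula to $\operatorname{Ran}E_{M_-}$ with $\alpha_\tau=\tau\Theta_{M_-}^2$. The upper bound splits off the tail $(\Id-E_{M_+})f$ with $\beta_\tau=\delta_{M_+}/\sqrt\tau$. Both are shown to vanish using $-\log\tau=J\log J+\mathcal O(J)$ from the count law. The only difference is cosmetic: you carry the normalization $N$ explicitly instead of setting the Tikhonov denominator equal to one.
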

	
	\begin{proof}
		For the lower bound, take $u=\HH f\in\BH_{M_-}$ with
		$f\in\operatorname{Ran}E_{M_-}$ and $\norm u_{L^2(\Gamma)}=1$.
		Equation~\eqref{eq:density-bound} gives $\norm f\le \Theta_{M_-}$, so
		\eqref{eq:tikhonov-variational} implies
		\[
		Q_\tau(z)\ge\frac{|u(z)|^2}{1+\tau \Theta_{M_-}^2}.
		\]
		Taking the supremum over $u$ proves the lower bound with
		$\alpha_\tau=\tau \Theta_{M_-}^2$.
		
		For the upper bound, normalize $f$ so that
		$\norm{\HH f}_{L^2(\Gamma)}^2+\tau\norm f^2=1$.
		Then $\norm f\le\tau^{-1/2}$, and \eqref{eq:angular-tail} gives
		\[
		\norm{\HH(\Id-E_{M_+})f}_{C(\mathcal K)}
		+\norm{\HH(\Id-E_{M_+})f}_{L^2(\Gamma)}
		\le \delta_{M_+}/\sqrt\tau=:\beta_\tau.
		\]
		Thus $\HH E_{M_+}f\in\BH_{M_+}$ has boundary norm at most
		$1+\beta_\tau$, and
		$|\HH f(z)|\le(1+\beta_\tau)\sqrt{K_{M_+}(z)}+\beta_\tau$.
		Taking the supremum in \eqref{eq:tikhonov-variational} proves the
		upper bound.
		
		Finally, \eqref{eq:count-law} gives $-\log\tau=J\log J+\mathcal O(J)$.
		The definitions of $M_\pm,\Theta_M,\delta_M$ therefore yield
		\[
		\alpha_\tau\le e^{-\eps J\log J+C_\eps J},\qquad
		\beta_\tau\le e^{-(\eps/2)J\log J+C_\eps J}.
		\]
		Both tend to zero, uniformly on the fixed compact set.
	\end{proof}
	\subsection{Proof of Theorem~\ref{thm:analytic-boundary-resolution}}
    \label{theoan}
	\begin{proof}
		The count law is \eqref{eq:count-law}. Fix $T>0$, put $J=J_\tau$ and
		$z_\tau(t)=x+2h_\Gamma(x)t\nu(x)/J$. Since $h_\Gamma$ is bounded on $\Gamma$ and
		$J\to\infty$, all these points, with $x\in\Gamma$ and $0\le t\le T$,
		lie in the fixed compact set
		\[
		\mathcal K=\{x+r\nu(x):x\in\Gamma,\ 0\le r\le r_0/2\}
		\]
		for sufficiently small $\tau$. Thus Lemma~\ref{lem:direct-comparison}
		applies uniformly to them. For fixed $0<\eps<1$,
		Proposition~\ref{prop:bessel-kernel}, with the rescaled variable
		$2tM_\pm/J$, gives
		\[
		\frac{K_{M_\pm}(z_\tau(t))}{J}
		\longrightarrow\frac{1\pm\eps}{2\pi h_\Gamma(x)}G((1\pm\eps)t)
		\]
		uniformly in $x\in\Gamma$ and $0\le t\le T$. These kernel ratios are
		bounded, so dividing \eqref{eq:direct-comparison} by $J$ makes all
		its error terms vanish. Let $\tau\downarrow0$ and then
		$\eps\downarrow0$. Uniform continuity of $G$ on bounded intervals
		therefore gives
		\begin{equation}
			\frac{Q_\tau(z_\tau(t))}{J_\tau}
			\longrightarrow\frac{G(t)}{2\pi h_\Gamma(x)}
			\label{eq:tikhonov-limit}
		\end{equation}
		uniformly. At $t=0$ this proves the boundary amplitude in
		\eqref{eq:analytic-boundary-count}. Dividing by this positive boundary limit
		proves \eqref{eq:analytic-boundary-profile}.

	For the width, fix $0<\varepsilon<t_{1/2}$. Since $G$ is strictly increasing,
		the normalized profile is below $2$ throughout
		$0\le t\le t_{1/2}-\varepsilon$ and above $2$ at $t=t_{1/2}+\varepsilon$, for all
		small enough $\tau$, uniformly in $x$. These points lie in the fixed
		normal neighbourhood, so \eqref{eq:analytic-halfheight-definition} gives
		\[
		2(t_{1/2}-\varepsilon)\frac{h_\Gamma(x)}{J_\tau}
		\le w_{1/2}(x;\tau)
		\le2(t_{1/2}+\varepsilon)\frac{h_\Gamma(x)}{J_\tau}.
		\]
		Letting $\varepsilon\downarrow0$ and substituting
		$J_\tau\sim L_\tau/\log L_\tau$ proves \eqref{eq:analytic-halfheight-noise-law}.
	\end{proof}
	
	\section{A diffusion-based boundary correlation model}
\label{app:heat-kernel-model}

This appendix introduces a prescribed boundary-source correlation
model based on diffusion along the boundary \cite{NiuEtAl2019Intrinsic}, which provides a concrete realization of the source-side modal diversity considered in Section~\ref{sec:abstract-global-correlation}.
Let \(\Gamma\) be a smooth, connected, closed planar curve of
length \(L\), parameterized by arclength \(s\in[0,L)\).
The boundary Laplacian is
\(
\Delta_\Gamma=\partial_s^2
\)
with periodic boundary conditions.
The heat equation \(\partial_t f=\Delta_\Gamma f\) averages
neighboring values along the boundary and suppresses rapid
spatial variations. 

This motivates using the smoothing operator
\[
R_\ell=e^{\ell^2\Delta_\Gamma},\qquad \ell>0,
\]
to model boundary-source correlation, which can be interpreted as smoothing spatial white noise
along the boundary. Neighboring points then share random inputs
over an arclength scale of order \(\ell\), producing spatial
correlation.

The arclength Fourier modes satisfy
\[
e_n(s)=L^{-1/2}e^{2\pi ins/L},
\qquad
-\Delta_\Gamma e_n=\omega_n^2e_n,
\qquad
\omega_n=\frac{2\pi n}{L},
\quad n\in\mathbb Z.
\]
Consequently,
\[
R_\ell e_n=e^{-\ell^2\omega_n^2}e_n.
\]
These formulas hold for any smooth closed connected planar boundary since the diffusion is defined intrinsically in arclength. To vary correlation without changing source power, fix \(P=v_0L\)
and define
\begin{align}\label{zl}
    C_\ell=\frac{P}{Z_\ell}R_\ell,
\qquad
Z_\ell=\operatorname{tr}R_\ell
=\sum_{n\in\mathbb Z}e^{-\ell^2\omega_n^2}.
\end{align}
Its eigenvalues are  \(\kappa_n(\ell)
=
P\frac{e^{-\ell^2\omega_n^2}}{Z_\ell}\),
and the normalization preserves both total power and pointwise
variance,
\[
\operatorname{tr}C_\ell=P,
\qquad
\sum_{n\in\mathbb Z}\kappa_n(\ell)|e_n(s)|^2=v_0.
\]
Increasing \(\ell\) therefore redistributes a fixed source power
toward lower arclength frequencies. The following proposition
makes this precise.

\begin{proposition}
\label{prop:arclength-spectral-concentration}
If \(0<\ell_1<\ell_2\), then
\(
C_{\ell_1}\succeq_{\rm div}C_{\ell_2}.
\)
Consequently, for every \(\beta>0\),
\(
\mathcal R_{\rm src}(C_{\ell_1},\beta)
\ge
\mathcal R_{\rm src}(C_{\ell_2},\beta).
\)
\end{proposition}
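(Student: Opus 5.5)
The plan is to verify the majorization condition \eqref{eq:fixed-power-majorization} directly from the explicit spectrum, and then invoke Proposition~\ref{thm:correlation-effective-dimension} for the effective-rank consequence. Both covariances have trace $P$ and are diagonal in the same basis $\{e_n\}$. Ordering the modes by $|n|$ (namely $0,\pm1,\pm2,\dots$) gives the nonincreasing order of $\kappa_n(\ell)$ for every $\ell$, since $\omega_n^2$ is nondecreasing in $|n|$. Ties between $\pm n$ do not matter for partial sums. The $M$ largest eigenvalues of $C_\ell$ are therefore taken over the same index set $I_M$ for every $\ell$, where $I_M$ consists of the first $M$ indices in this fixed ordering. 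Writing $q_n(\ell)=e^{-\ell^2\omega_n^2}/Z_\ell$, the required inequality becomes
\[
S_M(\ell):=\sum_{n\in I_M}q_n(\ell)\quad\text{is nondecreasing in }\ell,
\]
so that $S_M(\ell_1)\le S_M(\ell_2)$ for all $M$. This is exactly $C_{\ell_1}\succeq_{\rm div}C_{\ell_2}$.

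To prove the monotonicity I will set $s=\ell^2$ and regard $q_n$ as a Gibbs distribution with energy $\omega_n^2$. Then $\partial_s\log q_n=-\omega_n^2+\langle\omega^2\rangle_s$, where $\langle\omega^2\rangle_s=\sum_m\omega_m^2q_m(s)$. Hence
\[
\partial_s S_M=\sum_{n\in I_M}q_n\bigl(\langle\omega^2\rangle_s-\omega_n^2\bigr)
=S_M(1-S_M)\bigl(\langle\omega^2\rangle_{I_M^c}-\langle\omega^2\rangle_{I_M}\bigr),
\]
where $\langle\cdot\rangle_{I}$ denotes the conditional average over $I$. Every energy in $I_M^c$ is at least every energy in $I_M$, so the bracket is nonnegative and $\partial_sS_M\ge0$.

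As an alternative that avoids differentiation, I can use a cross-multiplication argument. The inequality $S_M(\ell_1)\le S_M(\ell_2)$ is equivalent to
\[
\sum_{n\in I_M,\,m\notin I_M}\bigl(e^{-\ell_2^2\omega_n^2-\ell_1^2\omega_m^2}-e^{-\ell_1^2\omega_n^2-\ell_2^2\omega_m^2}\bigr)\ge0.
\]
Each term is nonnegative because $(\ell_2^2-\ell_1^2)(\omega_m^2-\omega_n^2)\ge0$. This version is cleaner, and it is the one I will write.

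Convergence issues are trivial: the series converge absolutely because of the Gaussian decay. The only delicate point is justifying that the sorted spectra use a common index set, including ties, and this is handled by the observation that partial sums of a nonincreasing rearrangement depend only on the multiset of values. Once the majorization holds, the inequality $\mathcal R_{\rm src}(C_{\ell_1},\beta)\ge\mathcal R_{\rm src}(C_{\ell_2},\beta)$ is the first statement of Proposition~\ref{thm:correlation-effective-dimension}, applied with $C_2=C_{\ell_1}$ and $C_1=C_{\ell_2}$. I expect no real obstacle; the main care is in the ordering and tie bookkeeping.
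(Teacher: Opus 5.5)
Your proposal is correct and follows essentially the paper's argument. The paper also fixes the common ordering of the squared frequencies $a_1\le a_2\le\cdots$ and shows, with $t=\ell^2$, that every leading partial sum is nondecreasing via the identity $\frac{d}{dt}\sum_{i\le M}\kappa_i(t)=\frac1P\sum_{i\le M}\sum_{j>M}\kappa_i\kappa_j(a_j-a_i)\ge0$, which is your first computation. It then invokes Proposition~\ref{thm:correlation-effective-dimension}. Your cross-multiplication version is the finite-difference form of the same pairing over $I_M\times I_M^c$, and its only gain is that it avoids justifying termwise differentiation of the series.
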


\begin{proof}
Set \(t=\ell^2\) and order the squared frequencies as
\(a_1\le a_2\le\cdots\), including multiplicities.
The corresponding decreasing covariance eigenvalues are
\[
\kappa_j(t)
=
P\frac{e^{-ta_j}}{\sum_i e^{-ta_i}}.
\]
For every \(M\ge1\), differentiation gives
\[
\frac{d}{dt}\sum_{i=1}^{M}\kappa_i(t)
=
\frac1P
\sum_{i=1}^{M}\sum_{j>M}
\kappa_i(t)\kappa_j(t)(a_j-a_i)
\ge0.
\]
Thus every leading partial sum increases while the total sum
remains \(P\), proving spectral concentration.
The effective-rank inequality follows from
Proposition~\ref{thm:correlation-effective-dimension}.
\end{proof}

	\bibliography{bib.bib}
	\bibliographystyle{abbrv} 
\end{document}